\theoremstyle{plain}
\newtheorem{thrm}{Theorem}[section]
\newtheorem{lmm}[thrm]{Lemma}
\newtheorem*{lmm*}{Lemma}
\newtheorem{prpstn}[thrm]{Proposition}
\newtheorem{crllry}[thrm]{Corollary}
\newtheorem*{thrm*}{Theorem}
\newtheorem*{rmk}{Remark}
\newtheorem{cnjctr}[thrm]{Conjecture}
\newtheorem*{mainques}{Main Question}
\numberwithin{equation}{section}
\DeclareMathOperator*{\tr}{tr}
\renewcommand{\tilde}{\widetilde}
\newcommand{\cW}{\mathcal{W}}
\newcommand{\ZZ}{\mathbb{Z}}
\newcommand{\RR}{\mathbb{R}}
\title{New large value estimates for Dirichlet polynomials}
\author{Larry Guth and James Maynard}
\begin{document}
\begin{abstract}
We prove new bounds for how often Dirichlet polynomials can take large values. This gives improved estimates for a Dirichlet polynomial of length $N$ taking values of size close to $N^{3/4}$, which is the critical situation for several estimates in analytic number theory connected to prime numbers and the Riemann zeta function. As a consequence, we deduce a zero density estimate $N(\sigma,T)\le T^{30(1-\sigma)/13+o(1)}$ and asymptotics for primes in short intervals of length $x^{17/30+o(1)}$.
\end{abstract}
\maketitle

%
%
%
%

\section{Introduction}

In this paper we prove new bounds for the frequency of large values of Dirichlet polynomials. This gives improved estimates for a Dirichlet polynomial of length $N$ taking values of size close to $N^{3/4}$.  Our main result is the following.

%
%

\begin{thrm}[Large values estimate]\label{thrm:LargeValues}
Suppose $(b_n)$ is a sequence of complex numbers with $|b_n| \le 1$,  and $(t_r)_{r\le R}$ is a sequence of $1$-separated points in $[0,T]$ such that
\[
\Bigl|\sum_{n= N}^{2N}b_n n^{it_r}\Bigr|\ge V
\]

for all $r\le R$. Then we have
\[
R\le T^{o(1)}\Bigl(N^2V^{-2}+N^{18/5}V^{-4}+TN^{12/5}V^{-4}\Bigr).
\]

\end{thrm}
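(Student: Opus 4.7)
The plan is to dualize via Hal\'asz--Montgomery and reduce the theorem to bounding a specific additive energy of the sampling set weighted by the Dirichlet kernel. Write $D(t) = \sum_{n=N}^{2N} b_n n^{it}$ and $K(u) = \sum_{n=N}^{2N} n^{iu}$. Choose unimodular coefficients $a_r$ with $a_r D(t_r) = |D(t_r)| \ge V$; summing over $r$ and applying Cauchy--Schwarz in $n$ (using $\sum_n |b_n|^2 \le N$) gives
\[
R^2 V^2 \le N \sum_{r, s} a_r \bar a_s K(t_r - t_s).
\]
Isolating the diagonal $r = s$ contributes $N \cdot R \cdot K(0) = N^2 R$ and immediately yields the first term $R \le N^2 V^{-2}$ of the theorem.

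For the off-diagonal, a second Cauchy--Schwarz in $(r, s)$ gives
\[
\Bigl| \sum_{r \ne s} a_r \bar a_s K(t_r - t_s) \Bigr| \le R\,(E')^{1/2}, \qquad E' := \sum_{r \ne s} |K(t_r - t_s)|^2,
\]
so the estimate collapses to $R V^2 \le N^2 + N (E')^{1/2}$. A short calculation shows that any energy bound of the form
\[
E' \le T^{o(1)} \bigl( R N^{8/5} + R T N^{2/5} \bigr)
\]
rearranges to yield exactly the two remaining terms $N^{18/5} V^{-4}$ and $T N^{12/5} V^{-4}$ of the theorem. Using the Fourier identity $|K(u)|^2 = \sum_{n_1, n_2}(n_1/n_2)^{iu}$, the energy is (up to lower-order errors) equal to $\sum_{n_1 \ne n_2}|\sum_r e^{it_r \log(n_1/n_2)}|^2$, recasting the problem as a mean-value estimate for the exponential sum $\Phi(\alpha) = \sum_r e^{it_r \alpha}$ sampled at logarithms of rationals $n_1/n_2$ with $n_1, n_2 \sim N$.

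The main obstacle is this energy estimate. The trivial bound from $|K(u)| \ll N/(1+|u|)$ gives only $E' \ll R N^2$, so one must save a factor of $N^{2/5}$. I would attack this by first grouping $(n_1, n_2)$ according to their reduced ratio $p/q$ (so the distinct frequencies $\log(p/q)$ are $\gtrsim 1/N^2$-separated, with multiplicity $\lesssim N/q$), and then interpreting each pair $(r, p/q)$ as a thin tube in the $(t, \alpha)$-plane via stationary phase on the Dirichlet kernel. In this setup $E'$ counts incidences between $R$ points $\{t_r\}$ and roughly $N^2/q$ tubes at each dyadic scale of $q$, and a sharp incidence-theoretic bound for this tube configuration should provide exactly the required saving with exponents $8/5$ and $2/5$. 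Making this geometric reformulation rigorous and carrying the incidence count to those precise exponents is the conceptual novelty and the bulk of the work; the Hal\'asz--Montgomery / Cauchy--Schwarz framework above is then standard bookkeeping.
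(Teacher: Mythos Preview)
Your reduction is the classical Hal\'asz--Montgomery scheme, and the fatal loss is the \emph{second} Cauchy--Schwarz. After it, $E'=\sum_{r\ne s}|K(t_r-t_s)|^2$ depends only on the point set $W=\{t_r\}$; the coefficients $b_n$, the polynomial $D$, and the level $V$ have all been discarded. Your proposed incidence attack likewise uses only $W$. But then you are asking for $E'\lessapprox RN^{8/5}+RTN^{2/5}$ to hold for an \emph{arbitrary} $1$-separated $R$-set in $[0,T]$, and this is false: a generic such set has $E'\asymp R^2N$ (this is the $|W|^2N$ term in Heath-Brown's Theorem~\ref{thrm:HeathBrown}, which is essentially sharp for $N\in[T^{2/3},T]$), and $R^2N>RN^{8/5}$ precisely once $R>N^{3/5}$, the range you need. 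Equivalently, feeding $E'\approx R^2N$ into your inequality $RV^2\le N^2+N(E')^{1/2}$ collapses it to $V\lesssim N^{3/4}$, the well-known barrier of this method at $\sigma=3/4$ that Montgomery already identified. No incidence count on $W$ alone can break it; the large-value hypothesis has to be used again, and you have thrown it away.

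The paper avoids this loss by going one moment higher. Rather than the bilinear form $\sum_{r,s}a_r\bar a_sK(t_r-t_s)$ (essentially $s_1(M_W)^2$ via $\tr((M_WM_W^*)^2)$), it controls $s_1(M_W)^6$ through a centred version of $\tr((M_WM_W^*)^3)$, a sum over \emph{triples} $(t_1,t_2,t_3)\in W^3$. After Poisson summation this triple sum carries a cancellation unavailable at the bilinear level: the three phase arguments are confined to the surface $z_1z_2z_3=1$, and non-stationary phase in the transverse direction saves a full factor of $T$. That gain, combined with a new equidistribution estimate for sums over affine maps $v\mapsto(m_1v+m_3)/m_2$ and a separate bound on the additive energy $E(W)$ (the latter does use Heath-Brown, but applied to $D$ itself, not to $K$), is what produces the exponents $18/5$ and $12/5$. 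None of this structure survives your second Cauchy--Schwarz.
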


%
%

The bound of Theorem \ref{thrm:LargeValues} can be compared with the bound

\begin{equation}
R\le T^{o(1)}\Bigl(N^2V^{-2}+T\min(NV^{-2},N^4 V^{-6})\Bigr)
\label{eq:ClassicalLargeValue}
\end{equation}

coming from combining the classical Mean Value Theorem for Dirichlet polynomials and the Montgomery-Hal\'asz-Huxley large values estimate. Together these previous results give bounds at least as strong as Theorem \ref{thrm:LargeValues} when $V$ is smaller than $N^{7/10}$ or bigger than $N^{8/10}$, but Theorem \ref{thrm:LargeValues} gives a stronger bound when $N^{7/10+\epsilon}<V<N^{8/10-\epsilon}$ and $N\le T^{5/6-\epsilon}$. There are various improvements of the large values estimate which will supersede ours when $V$ is a bit larger than $N^{3/4}$ (see \cite[Chapter 11]{Iv}, for example), but Theorem \ref{thrm:LargeValues} represents the first substantive improvement on the bound \eqref{eq:ClassicalLargeValue} when $N^{7/10+\epsilon}\le V\le N^{3/4}$ (when the Mean Value Theorem gave the previous best bounds) or on bounds when $V$ is close to $N^{3/4}$ (where the previous best bounds gave only very small improvements on \eqref{eq:ClassicalLargeValue}).

The key interest in this result is that for many applications in analytic number theory, it is the case when $V\approx N^{3/4}$ which is the critical limiting scenario. In this situation the Mean Value Theorem and Large Values Estimate both give a bound of roughly $N^{1/2}+TN^{-1/2}$ whereas Theorem \ref{thrm:LargeValues} gives roughly $N^{3/5}+TN^{-3/5}$. Therefore we obtain an improvement for $N$ smaller than $T^{10/11}$, and correspondingly we expect Theorem \ref{thrm:LargeValues} to lead to a quantitative improvement to any result where this covers the limiting situation.

One well-studied situation where the limiting case is  improved by Theorem \ref{thrm:LargeValues} is zero-density estimates for the Riemann Zeta function $\zeta(s)$. Let $N(\sigma,  T)$ be the number of zeroes of $\zeta(s)$ in the rectangle $\Re(s) \ge \sigma$ and $|\Im(s) | \le T$.      After early work by Carlson \cite{Ca},  Ingham \cite{I} proved the bound 

\begin{equation}
N(\sigma,  T) \le T^{\frac{3 (1 - \sigma)}{2 - \sigma} + o(1)},
\label{eq:Ingham}
\end{equation}

ultimately relying on the Mean Value Theorem for Dirichlet polynomials. Huxley \cite{Hu},  building on work of Montgomery \cite{M3} and Hal\'asz \cite{H} and ultimately relying on the Montgomery-Hal\'asz large values estimate, proved the bound

\begin{equation}
 N(\sigma, T) \le T^{\frac{3(1 - \sigma)}{3 \sigma - 1} + o(1)}. 
\label{eq:Huxley}
\end{equation}

This improves on \eqref{eq:Ingham} for $\sigma > 3/4$ (corresponding to when the term $N^4V^{-6}$ is smaller than $NV^{-2}$ for $V=N^\sigma$ in $\min(NV^{-2},N^4V^{-6})$ in \eqref{eq:ClassicalLargeValue}).

When $\sigma\approx 3/4$ the bounds \eqref{eq:Ingham} and \eqref{eq:Huxley} coincide, and the critical situation turns out to be related to values of size $V=N^{3/4}$ of Dirichlet polynomials of length $N=T^{4/5}$, where both estimates give $R\le T^{3/5+o(1)}$. In this situation Theorem \ref{thrm:LargeValues} gives an improved estimate of $R\le T^{13/25+o(1)}$. Incorporating Theorem \ref{thrm:LargeValues} into the zero density machinery gives the following result.

%
%

\begin{thrm}[Zero density estimate]\label{thrm:ZeroDensity}
Let $N(\sigma,T)$ denote the number of zeros $\rho$ of $\zeta(s)$ with $\Re(\rho)\ge \sigma$ and $|\Im(\rho)|\le T$. Then we have

\[
N(\sigma,T)\le T^{15(1-\sigma)/(3+5\sigma)+o(1)}.
\]

\end{thrm}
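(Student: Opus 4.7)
The plan is to follow the standard zero-detection framework of Ingham--Montgomery--Huxley--Heath-Brown, substituting Theorem \ref{thrm:LargeValues} for the classical Halasz--Montgomery large values bound in the critical range of Dirichlet polynomial lengths. First I would introduce a M\"obius mollifier $\mathcal{M}(s)=\sum_{n\le X}\mu(n)n^{-s}$ with $X=T^{\gamma}$ to be chosen. The identity $\zeta(s)\mathcal{M}(s)=1+\sum_{n>X}c(n)n^{-s}$, where $|c(n)|\le d(n)$, combined with $\zeta(\rho_r)=0$ at the well-spaced zeros $\rho_r=\sigma+it_r$, $1\le r\le R$, yields $|\sum_{n>X}c(n)n^{-\rho_r}|=1$. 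Truncating the tail by a smooth cutoff (justified using the approximate functional equation) and pigeonholing over the $O(\log T)$ dyadic scales produces $\gg R\,T^{-o(1)}$ of the points $t_r$ and a common dyadic $N$ such that, after normalizing the coefficients so that $|b_n|\le 1$,
\[
\Bigl|\sum_{N<n\le 2N}b_n n^{-it_r}\Bigr|\ge V=N^{\sigma-o(1)}.
\]

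Applying Theorem \ref{thrm:LargeValues} with $V=N^{\sigma}$ then gives $R\le T^{o(1)}(N^{2(1-\sigma)}+N^{18/5-4\sigma}+TN^{12/5-4\sigma})$, hence the same bound on $N(\sigma,T)$. Set $B:=15(1-\sigma)/(3+5\sigma)$ and $N=T^\theta$. A direct calculation shows that each of the three terms is $\le T^B$ precisely when $\theta$ lies in the interval $[\theta_{\min},\theta_{\max}]$, where $\theta_{\min}=5/(3+5\sigma)$ (from the third term) and $\theta_{\max}=75(1-\sigma)/[(3+5\sigma)(18-20\sigma)]$ (from the second term); non-emptiness holds precisely for $\sigma\ge 3/5$. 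I would choose the mollifier length $X=T^{\theta_{\min}}$ so that the pigeonholed $N$ satisfies $N\ge T^{\theta_{\min}}$. For $N\ge T^{\theta_{\max}}$ I would instead use the classical Mean Value Theorem $R\le T^{o(1)}(N^{2(1-\sigma)}+TN^{1-2\sigma})$, both of whose terms are $\le T^B$ in this complementary range (a verification that reduces to the positivity of the quadratic $250\sigma^2-375\sigma+141$, of discriminant $-375$). The very large $N$ regime is limited by standard approximate-functional-equation arguments. Assembling these bounds yields $N(\sigma,T)\ll T^{B+o(1)}$ for $\sigma\in(3/5,4/5]$; for $\sigma>4/5$ the claim follows from Huxley's classical estimate $T^{3(1-\sigma)/(3\sigma-1)+o(1)}$ (which coincides with $T^B$ at $\sigma=4/5$ and is stronger above), and for $\sigma\le 3/5$ the bound $T^B\ge T^{1+o(1)}$ is trivial.

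The main obstacle lies in the orchestration of parameter ranges: the exponent $B$ arises not from balancing two terms of Theorem \ref{thrm:LargeValues} against each other, but rather from balancing the second and third terms separately against the target $T^B$, with the endpoints $\theta_{\min}$ and $\theta_{\max}$ then pinned down by the mollifier length and by the Mean Value Theorem fallback, respectively. Secondary issues are the handling of the boundary $N$-regimes via the approximate functional equation and the bookkeeping of $T^{o(1)}$ losses accumulated through the pigeonhole, though no ideas beyond the standard Huxley--Heath-Brown zero-detection framework should be required.
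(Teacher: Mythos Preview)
Your parameter analysis is correct—the interval $[\theta_{\min},\theta_{\max}]$ with $\theta_{\min}=5/(3+5\sigma)$ and $\theta_{\max}=75(1-\sigma)/[(3+5\sigma)(18-20\sigma)]$, the quadratic check with discriminant $-375$, and the matching with Huxley at $\sigma=4/5$ are all right. But there is a genuine gap in how you feed the zero-detection output into Theorem~\ref{thrm:LargeValues}. For every $\sigma\in(3/5,4/5]$ you have $\theta_{\min}=5/(3+5\sigma)>1/2$, so your plan requires the pigeonholed dyadic length to satisfy $N\ge T^{\theta_{\min}}>T^{1/2}$. You propose to arrange this by taking the mollifier length $X=T^{\theta_{\min}}$, but the standard zero-detection machinery does not tolerate a mollifier that long: the ``Type~II'' (class-2) contribution is governed by $\int_T^{2T}|\zeta(\tfrac12+it)M_X(\tfrac12+it)|^2\,dt$, which is of size roughly $T+T^{1/2}X$, and with $X=T^{\theta_{\min}}$ this produces far more than $T^{B}$ exceptional zeros (e.g.\ at $\sigma=3/4$ one gets $\gtrsim T^{0.74}$ Type~II zeros against a target of $T^{5/9}$). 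Equivalently, in any version of the framework where the Type~II zeros are under control one has $N\lessapprox T^{1/2}$, which lies entirely below your window $[\theta_{\min},\theta_{\max}]$; applying Theorem~\ref{thrm:LargeValues} directly to $D_N$ then never improves on the mean-value bound.

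The missing step—and what the paper does—is to raise the detected Dirichlet polynomial to a power before invoking the large-values estimate. With a short mollifier one has $N\lessapprox T^{1/2}$; squaring gives a polynomial of length $N^2\lessapprox T$ and large value $N^{2\sigma}$, and your interval $[\theta_{\min},\theta_{\max}]$ then describes exactly the range of $N^2$ (not $N$) in which Theorem~\ref{thrm:LargeValues} yields $|W|\lessapprox T^{B}$. The complementary ranges are handled by applying the mean-value theorem to $D_N^2$ (for $N^2$ above $T^{\theta_{\max}}$) and to $D_N^3$ or higher powers (for $N^2$ below $T^{\theta_{\min}}$, i.e.\ $N$ small). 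Once you insert this squaring step, all of your numerics carry over verbatim with $N$ replaced by $N^2$, and the argument goes through.
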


Combining this with Ingham's estimate when $\sigma\le 7/10$, we obtain 

\begin{equation}
N(\sigma,T)\le T^{30(1-\sigma)/13+o(1)}.
\label{eq:ZeroDensity}
\end{equation}

The exponent $30/13$ improves on the previous exponent of $12/5$ due to Huxley \cite{Hu}. This has the following corollaries for the distribution of primes in short intervals.

%
%

\begin{crllry}[Count of primes in short intervals]\label{crllry:All}
Let $y\in[x^{17/30+\epsilon},x^{0.99}]$. Then we have

\[
\pi(x+y)-\pi(x)=\frac{y}{\log{x}}+O_\epsilon\Bigl(y \exp(-\sqrt[4]{\log{x}})\Bigr).
\]

\end{crllry}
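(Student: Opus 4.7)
The plan is to deduce the corollary from \eqref{eq:ZeroDensity} by the classical Ingham-Huxley method: use a truncated explicit formula for $\psi(x+y)-\psi(x)$, bound the resulting sum over non-trivial zeros of $\zeta$ using the zero density estimate together with a zero-free region, and then pass from $\psi$ to $\pi$ by partial summation.

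First I would reduce to the statement $\psi(x+y)-\psi(x)=y+O_\epsilon(y\exp(-\sqrt[4]{\log x}))$. By the truncated Perron formula with parameter $T$,
\[
\psi(x+y)-\psi(x)=y-\sum_{\rho:\,|\gamma|\le T}\frac{(x+y)^\rho-x^\rho}{\rho}+O\Bigl(\tfrac{x(\log xT)^2}{T}\Bigr),
\]
where $\rho=\beta+i\gamma$ runs over non-trivial zeros of $\zeta$. The choice $T=x/y$ makes the error term admissible since $y\ge x^{17/30+\epsilon}$.

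The core step is bounding the sum over zeros. Using the mean value estimate $|(x+y)^\rho-x^\rho|\le y|\rho|x^{\beta-1}$ and integrating by parts against the counting function $N(\sigma,T)$ via a dyadic decomposition in $\beta$, I obtain
\[
\sum_{|\gamma|\le T,\,\beta\le 1-\eta}\frac{|(x+y)^\rho-x^\rho|}{|\rho|}\ll y(\log x)\sup_{1/2\le\sigma\le 1-\eta}x^{\sigma-1}T^{30(1-\sigma)/13+o(1)}.
\]
With $T=x/y$ and $y=x^\theta$, the exponent of $x$ equals $-(1-\sigma)\bigl(1-\tfrac{30}{13}(1-\theta)\bigr)+o(1)$, which is negative uniformly in $\sigma\in[1/2,1-\eta]$ precisely when $\theta>17/30$; for $\theta=17/30+\epsilon$ the supremum is attained at $\sigma=1-\eta$ and equals $x^{-c\epsilon\eta+o(1)}$. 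Taking $\eta\asymp(\log x)^{-1/2}$ from the classical de la Vall\'ee Poussin zero-free region (which rules out all $\rho$ with $\beta>1-\eta$ and $|\gamma|\le\exp(\sqrt{\log x})$), this yields a saving of size $\exp(-c\epsilon\sqrt{\log x})$, uniformly in $y$ over $[x^{17/30+\epsilon},x^{0.99}]$ (larger $y$ only renders the exponent more negative). This comfortably dominates $\exp(-\sqrt[4]{\log x})$, and partial summation converts the $\psi$ estimate back into one for $\pi$.

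The main obstacle is the technical bookkeeping: handling zeros with large imaginary part via the trivial bound $|(x+y)^\rho-x^\rho|/|\rho|\le 2x^\beta/|\gamma|$, absorbing logarithmic losses in the dyadic decomposition, and verifying that \eqref{eq:ZeroDensity} can be applied uniformly on the whole range $[1/2,1]$ (for $\sigma\le 7/10$ this requires invoking Ingham's bound \eqref{eq:Ingham} in place of Theorem \ref{thrm:ZeroDensity}). The specific exponent $17/30$ arises as $1-13/30$, with $13/30$ being the reciprocal of $30/13$ in \eqref{eq:ZeroDensity}, reflecting the general principle that a zero density bound with exponent $A$ produces prime asymptotics in intervals of length $x^{1-1/A+\epsilon}$.
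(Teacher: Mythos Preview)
Your overall strategy (explicit formula, zero density estimate, zero-free region, partial summation) is exactly the paper's, but there is a genuine gap in the error-term argument: the de la Vall\'ee Poussin zero-free region is too weak to yield the stated error $O_\epsilon(y\exp(-\sqrt[4]{\log x}))$.

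The point is that the explicit formula must be truncated at height $T\asymp x/y\approx x^{13/30-\epsilon}$, so $\log T\asymp \log x$. The classical zero-free region at this height only gives $\beta\le 1-c/\log T\asymp 1-c/\log x$. You invoke $\eta\asymp(\log x)^{-1/2}$, but de la Vall\'ee Poussin only excludes such zeros up to height $\exp(\sqrt{\log x})$; potential zeros with $\beta>1-(\log x)^{-1/2}$ and $\exp(\sqrt{\log x})<|\gamma|\le T$ are not ruled out, and neither the mean-value bound nor the trivial bound $2x^\beta/|\gamma|$ controls their contribution adequately (the latter gives a term of order $x\exp(-c\sqrt{\log x})\gg y$). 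Feeding the true de la Vall\'ee Poussin width $\eta\asymp 1/\log T$ into your own formula gives a saving of $x^{-c\epsilon\eta}=\exp(-c'\epsilon)$, merely a constant factor. The paper closes this gap by invoking the Vinogradov--Korobov zero-free region $\beta\le 1-c(\log T)^{-2/3}(\log\log T)^{-1/3}$, which at height $T$ yields $\eta\gg(\log x)^{-2/3-o(1)}$ and hence a saving $\exp(-c\epsilon(\log x)^{1/3-o(1)})\ll\exp(-\sqrt[4]{\log x})$; it also combines \eqref{eq:ZeroDensity} with a log-loss zero-density bound near $\sigma=1$ to replace the $T^{o(1)}$ by $(\log T)^{O(1)}$.

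A smaller slip: with $T=x/y$ the truncation error $O(x(\log x)^2/T)=O(y(\log x)^2)$ is not admissible; one needs $T=(x/y)\exp(c\sqrt[4]{\log x})$ or similar, as the paper does.
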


The exponent $\frac{17}{30}$ improves on the previous exponent $\frac{7}{12}$ due to Huxley \cite{Hu}.  

%
%

\begin{crllry}[Count of primes in `almost-all' short intervals]\label{crllry:AlmostAll}
Let $y\in [X^{2/15+\epsilon},X^{0.99}]$. Then for all but $O(X\exp(-\sqrt[4]{\log{x}}))$ choices of $x\in [X,2X]\cap\mathbb{N}$ we have

\[
\pi(x+y)-\pi(x)=\frac{y}{\log{x}}+O_\epsilon\Bigl(y \exp(-\sqrt[4]{\log{x}})\Bigr).
\]

\end{crllry}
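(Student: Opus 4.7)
The plan is to combine the zero-density estimate \eqref{eq:ZeroDensity} with the standard $L^2$-mean-value framework for primes in almost-all short intervals. This framework converts a density bound $N(\sigma,T)\ll T^{A(1-\sigma)+o(1)}$ into an almost-all short-interval exponent $1-2/A$; for $A=30/13$ this is exactly $2/15$, matching the hypothesis of the corollary, and the factor $2$ relative to Corollary \ref{crllry:All} is exactly what the mean-square gains over pointwise estimation.

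Concretely, I first reduce to $\psi$ by partial summation and apply the truncated explicit formula
\[
\psi(x+y)-\psi(x)-y=-\sum_{\rho:\,|\gamma|\le T_0}\frac{(x+y)^\rho-x^\rho}{\rho}+E(x,y,T_0),
\]
with $T_0=X^{1+\delta}/y$, so that $E(x,y,T_0)$ is comfortably smaller than $y\exp(-\sqrt[4]{\log x})$ once the classical zero-free region is used to dispose of zeros with $\beta$ very close to $1$. Squaring and integrating over $x\in[X,2X]$, and using $|(x+y)^\rho-x^\rho|/|\rho|\ll \min(yX^{\beta-1},X^\beta/|\gamma|)$ together with Gallagher's $L^2$-lemma for $\{x^\rho\}_\rho$, the squared integral is controlled, up to a factor $X^{o(1)}$, by
\[
y^2 \sum_{\rho:\,|\gamma|\le X/y} X^{2\beta-1}.
\]

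Dyadically decomposing in $1-\sigma$ and inserting Theorem \ref{thrm:ZeroDensity} bounds this by
\[
y^2 X\cdot X^{o(1)} \sup_{1/2\le\sigma\le 1} X^{-2(1-\sigma)}\,(X/y)^{30(1-\sigma)/13}.
\]
Writing $y=X^\theta$, the exponent inside the supremum is $(1-\sigma)\bigl[\tfrac{30}{13}(1-\theta)-2\bigr]$, which is negative uniformly in $\sigma\in[1/2,1]$ precisely when $\theta>2/15$. Thus whenever $y\ge X^{2/15+\epsilon}$ the squared integral is $\ll y^2 X^{1-\eta}$ for some $\eta=\eta(\epsilon)>0$, and Chebyshev's inequality bounds the measure of $x\in[X,2X]$ with $|\psi(x+y)-\psi(x)-y|>y\exp(-\sqrt[4]{\log x})$ by $X\exp(-\sqrt[4]{\log x})$. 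Standard partial summation then converts this into the claimed estimate for $\pi$.

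The main obstacle is not any single deep step but careful bookkeeping: one must verify that zeros with $|\gamma|>X/y$, the truncation tail in the explicit formula, and the narrow classical zero-free region near $\sigma=1$ all contribute errors that fit inside $y\exp(-\sqrt[4]{\log x})$. This follows the template Huxley used to pass from his density bound to his almost-all short-interval result; Theorem \ref{thrm:ZeroDensity} slots in without structural change, and the improved threshold $2/15$ is produced purely by the new density exponent.
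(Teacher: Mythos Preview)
Your approach is essentially the paper's own: explicit formula, square and integrate over $[X,2X]$, exploit near-orthogonality of the functions $x^{\rho}$ (the paper writes this out directly rather than invoking a named lemma), then feed in the density estimate together with the Vinogradov--Korobov zero-free region. The paper inserts one cosmetic device you omit, namely first passing to intervals of fixed relative length $\delta x$ with $\delta=X^{-13/15+\epsilon/2}$, but this is only a convenience and not a different method.

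There is, however, one genuine slip. You assert that after inserting Theorem~\ref{thrm:ZeroDensity} the mean square is $\ll y^2 X^{1-\eta}$ for some fixed $\eta(\epsilon)>0$. That is too strong: the density bound carries a $T^{o(1)}$ loss, and your displayed expression
\[
y^2 X\cdot X^{o(1)}\sup_{1/2\le\sigma\le 1} X^{-2(1-\sigma)}\,(X/y)^{30(1-\sigma)/13}
\]
is $y^2 X^{1+o(1)}$, since the supremum is attained at $\sigma=1$ (and the zero-free region only pushes $\sigma$ to $1-c(\log T)^{-2/3-o(1)}$, which cannot beat an $X^{o(1)}$ factor). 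The paper deals with exactly this point: it first upgrades \eqref{eq:ZeroDensity} to the form
\[
N(\sigma,T)\lesssim T^{(30/13+o(1))(1-\sigma)}(\log T)^{O(1)},
\]
by combining it with classical density estimates that lose only logarithms near $\sigma=1$. With the $o(1)$ now multiplied by $(1-\sigma)$, the loss is merely log-polynomial, and the Vinogradov--Korobov zero-free region then produces a saving of size $\exp\bigl(-c_\epsilon(\log X)^{1/3}(\log\log X)^{-1/3}\bigr)$, which comfortably dominates both the log losses and the required $\exp(-\sqrt[4]{\log X})$. Once you make this adjustment your outline goes through and matches the paper.
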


The exponent $\frac{2}{15}$ improves on the previous exponent $\frac{1}{6}$ due to Huxley \cite{Hu}.

We expect there to be various further applications of Theorem \ref{thrm:LargeValues} (and the underlying ideas) to improving quantitative estimates related to the primes and similar objects.

%
%

\subsection{Background}

Given an integer $N\in \mathbb{N}$ and a sequence $(b_n)_{N<n\le 2N}$, a Dirichlet polynomial is a trigonometric sum of the form

\begin{equation} \label{defdir} D(t) = \sum_{N<n\le 2N} b_n e^{i t \log n}. \end{equation} 

\noindent We say such a polynomial has length $N$, and we study the question how often a Dirichlet polynomial can be large in the interval $[0, T]$.  The precise question can be formulated as follows.

\begin{mainques} Suppose that $D(t)$ is a Dirichlet polynomial of the form \eqref{defdir} with $|b_n| \le 1$ for all $n$.  Suppose that $W \subset [0, T]$ is a 1-separated set of points $t$ where $|D(t)| > N^\sigma$.  What is the largest possible cardinality of $W$, in terms of $N$, $T$, and $\sigma$?
\end{mainques}

There are many examples of Dirichlet polynomials where $|W| \gg  N^{2 - 2 \sigma}$ when $\sigma\in(1/2,1)$ and $N\le T$.\footnote{For example, we can form such a Dirichlet polynomial as follows. If $w$ is a smooth 1-bounded function supported on $[1,2]$, $\epsilon>0$ is a sufficiently small fixed constant and $M^{\epsilon}<H<M/T^{\epsilon}$, then it follows from Poisson summation and repeated integration by parts that
\[
\Bigl|\sum_{h}w\Bigl(\frac{h}{H}\Bigr)(M+h)^{-it}\Bigr|= \begin{cases}
\gg H,\qquad &\text{if }|t|\le M/(10H),\\
\ll \epsilon^2M^2/(t^2H),&\text{if } M/(\epsilon H)\ll |t|\le M.
\end{cases}
\]
Therefore (taking $H=N^\sigma$ and $H$-separated choices of $M\in [N,2N]$) if $t_1,\cdots,t_J\in [T,T+N]$ are $J=\epsilon N^{1-\sigma}+O(1)$ points which are $\epsilon^{-1}N^{1-\sigma}$-separated, the Dirichlet polynomial
\[
D(t):=\sum_{j=1}^J \sum_{h}w\Bigl(\frac{h}{ N^\sigma}\Bigr)\Bigl(N+(j-1)\Bigl\lfloor \frac{N}{J}\Bigr\rfloor+h\Bigr)^{i(t_j-t)}
\]
will take a value of size $\gg N^\sigma$ on an interval of length $\gg N^{1-\sigma}$ around each of the $\gg \epsilon N^{1-\sigma}$ points $t_j$. Thus we can find $\gg_\epsilon N^{2-2\sigma}$ 1-separated points where it takes a value of size $\gg N^\sigma$.}

 Montgomery's large value conjecture \cite[page 142, Conjecture 2]{M2} predicts that under some natural conditions this lower bound is essentially tight. Bourgain \cite{Bour} gave a counterexample to an earlier (stronger) formulation given in terms of the $\ell^2$ norm of the coefficients $b_n$.

%
%

\begin{cnjctr}[Montgomery's large value conjecture] \label{cnjctr:Montgomery} 
Let $\sigma > 1/2$ and $D(t) = \sum_{N<n\le 2N} b_n e^{i t \log n}$ with $|b_n| \le 1$.  Suppose $W \subset [0,T] $ is a 1-separated set such that $|D(t)| > N^\sigma$ for $t\in W$.   Then there is a constant $C(\sigma)$ such that

\[
|W| \le C(\sigma) T^{o(1)} N^{2 - 2 \sigma}.
\]

\end{cnjctr}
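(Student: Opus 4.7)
Since the final statement is Montgomery's large value conjecture, a long-standing open problem, I do not expect to produce a proof; instead I will describe the strategy I would try and indicate where it breaks down.

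The plan is to split into regimes according to the relative sizes of $T$, $N$, and $V=N^\sigma$, and try to establish the conjectured bound $|W|\le T^{o(1)} N^{2-2\sigma}$ in each. The easy regime is $T\le N$. Here the Mean Value Theorem for Dirichlet polynomials already gives $|W|\le T^{o(1)}(N+T)\cdot NV^{-2}\le T^{o(1)} N^{2-2\sigma}$, so the first term of \eqref{eq:ClassicalLargeValue} suffices and no new input is needed. The full force of the conjecture therefore lies in the range $T>N$, where the $T$-dependent second terms in every known estimate are larger than the conjectured bound.

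For the hard regime $T>N$, I would invoke the standard duality/Hal\'asz--Montgomery framework: choose an integer moment $k=k(\sigma)$, expand $\sum_r |D(t_r)|^{2k}$ as a $2k$-fold correlation, and reduce to bounding sums of the form $\sum_{r\ne s}\bigl|\sum_n n^{i(t_r-t_s)}\bigr|$ using the $1$-separation of $W$. Near the critical exponent $\sigma\approx 3/4$ one would substitute Theorem~\ref{thrm:LargeValues} in place of the Hal\'asz--Montgomery input, which does give a genuine saving over \eqref{eq:ClassicalLargeValue} in a window of $\sigma$, and for other ranges one would hope to iterate such estimates or to combine them with Bombieri--Iwaniec--style exponential sum methods or additive-combinatorial inputs for the frequencies $\{\log n\}$.

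The main obstacle is exactly what keeps the conjecture open: every tool currently available -- the Mean Value Theorem, Hal\'asz--Montgomery, Huxley's sixth-moment bound, decoupling, and now Theorem~\ref{thrm:LargeValues} -- produces terms of the shape $T N^{\alpha}V^{-\beta}$ (such as $TNV^{-2}$ and $TN^{12/5}V^{-4}$) which exceed $T^{o(1)} N^{2-2\sigma}$ once $T$ is a sufficiently large power of $N$. Removing this factor of $T$ would require extracting cancellation from the multiplicative structure of the family $\{n^{it_r}\}_{r\le R}$ going well beyond pure $L^2$-orthogonality, decoupling, or energy arguments, and no such mechanism is presently known; it is precisely here that a serious new idea would be needed.
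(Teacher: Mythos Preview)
Your assessment is correct: the statement is labeled as a \emph{conjecture} in the paper precisely because it is open, and the paper makes no attempt to prove it. There is no ``paper's own proof'' to compare against; the paper instead proves the weaker Theorem~\ref{thrm:LargeValues}, which yields the conjectured bound only in restricted ranges (e.g.\ when $T\le N^{6/5}$ and $\sigma$ lies near $3/4$, via Proposition~\ref{prpstn:KeyProp}).

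Your discussion of the obstacles is accurate and matches the paper's own framing. In particular, the paper explicitly notes (around \eqref{basicorth} and the discussion of Hal\'asz--Montgomery) that for $\sigma\le 3/4$ the basic orthogonality bound $|W|\lesssim TN^{1-2\sigma}$ was previously the best known when $N\in[T^{2/3},T]$, and that the large values method gives no information there. Your identification of the $T$-dependent terms like $TN^{12/5}V^{-4}$ as the essential obstruction is exactly right: the paper's own method, based on bounding $s_1(M_W)$ via $\tr((M_WM_W^*)^3)$, would need sharp control of $\tr((M_WM_W^*)^r)$ for large $r$ to approach the conjecture, and the authors state they ``do not know how to obtain good bounds when $r\ge 4$''. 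So your conclusion that a genuinely new mechanism is required, beyond energy and decoupling arguments, is consistent with what the paper says.
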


%
%

Using a simple orthogonality argument,  it has long been known that for any $T \ge N$, 

\begin{equation} \label{basicorth} |W| = O( T N^{1 - 2 \sigma}),
\end{equation} 

which corresponds to the term $NV^{-2}$ in $\min(NV^{-2},N^4V^{-6})$ in \eqref{eq:ClassicalLargeValue}.

This basic estimate has been improved in several regimes.    An integer power of a Dirichlet polynomial is a Dirichlet polynomial, and applying (\ref{basicorth}) to powers of $D$ gives improved bounds when $N$ is fairly small compared with $T$.  For $\sigma \le 3/4$,  the best previous bounds on our main question came from this approach.  In particular, if $\sigma \le 3/4$ and if $N\in[T^{2/3}, T]$,  then (\ref{basicorth}) represented the best known bound.  

In the late 60s,  Montgomery \cite{M}, building on ideas of Hal\'asz \cite{H} and Tur\'an \cite{HT},  showed that Conjecture \ref{cnjctr:Montgomery} is true if $\sigma$ is sufficiently large. Indeed, \eqref{eq:ClassicalLargeValue} gives Conjecture \ref{cnjctr:Montgomery} when $N^\sigma>N^{1/2}T^{1/4}$. The large value method gives very strong information for large $\sigma$,  but it gives no information if $\sigma \le 3/4$ as Montgomery explains in \cite[page 141]{M2}. For $\sigma$ closer to $1$, there have been several refinements of the underlying ideas (see, for example, \cite{B,HB3,J}).

If one knows some more structure about the set of large values of a Dirichlet polynomial, then one can hope to have improved bounds. For example, another important result is Heath-Brown's work about the behavior of Dirichlet polynomials on difference sets:

%
%

\begin{thrm}[Heath-Brown, \cite{HB}]\label{thrm:HeathBrown}Suppose that $\mathcal{T}$ is a 1-separated set of points in an interval of length $T$.   Let $|a_n|\le T^{o(1)}$ be a complex sequence. Then

\[
\sum_{t_1,t_2\in \mathcal{T}}\Bigl|\sum_{n=N}^{2N} a_n n^{i(t_1-t_2)}\Bigr|^2\le T^{o(1)}\Bigl( |\mathcal{T}|^2N+|\mathcal{T}|N^2+|\mathcal{T}|^{5/4}T^{1/2}N\Bigr).
\]

\end{thrm}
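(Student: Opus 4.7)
The plan is to open the square, swap summations twice, and handle the arising pieces.

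Writing out and rearranging,
\[
S := \sum_{t_1,t_2 \in \mathcal{T}} \Bigl|\sum_{n=N}^{2N} a_n n^{i(t_1-t_2)}\Bigr|^2 = \sum_{n_1,n_2 \in [N,2N]} a_{n_1}\bar{a}_{n_2} \Bigl|\sum_{t_r \in \mathcal{T}} (n_1/n_2)^{it_r}\Bigr|^2.
\]
The diagonal $n_1 = n_2$ contributes $|\mathcal{T}|^2 \sum_n |a_n|^2 \lessapprox |\mathcal{T}|^2 N$, which is the first claimed term. For the off-diagonal part, bounding $|a_{n_1}\bar{a}_{n_2}| \lessapprox 1$ and swapping summation once more reduces the problem to estimating $\sum_{r, r' \in \mathcal{T}} |\sum_n n^{i(t_r - t_{r'})}|^2$. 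The $r=r'$ diagonal of this sum contributes $|\mathcal{T}| \cdot N^2$, the second claimed term.

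It remains to bound $E := \sum_{r \ne r'} |\sum_n n^{i(t_r - t_{r'})}|^2$. For the pairs with $|t_r - t_{r'}| \ll N$, van der Corput's first-derivative test gives the sharp pointwise bound $|\sum_n n^{is}| \lesssim N/|s|$, and summing over the $1$-separated set of differences gives $\lessapprox |\mathcal{T}| N^2$, which is within the claimed bound. The remaining contribution, from pairs with $|t_r - t_{r'}| \gtrsim N$, is where the subtlety lies.

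\textbf{Main obstacle.} A direct application of the mean value theorem for Dirichlet polynomials of length $N$ bounds $\sum_{r' \ne r} |\sum_n n^{i(t_r-t_{r'})}|^2 \lesssim (T+N)N$, producing only the weaker third term $|\mathcal{T}|TN$ after summation over $r$. To sharpen this to $|\mathcal{T}|^{5/4}T^{1/2}N$---a saving of $T^{1/2}/|\mathcal{T}|^{1/4}$---I would first pass via Cauchy-Schwarz to the $L^4$ moment, writing $(\sum_n n^{is})^2 = \sum_m c_m m^{is}$ as a Dirichlet polynomial of length $\sim N^2$ with coefficients $|c_m|\lessapprox 1$ and applying the mean value theorem to this squared polynomial; this yields a bound of shape $E \lessapprox |\mathcal{T}|^{3/2}(T^{1/2}N + N^2)$. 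The remaining gain of $|\mathcal{T}|^{1/4}$ would come from a further Hal\'asz-style refinement that exploits the $1$-separation of $\mathcal{T}$ (equivalently, bounds on the additive energy of the difference set), via an inequality interpolating between this $L^4$ moment and the trivial $L^\infty$ bound $|\sum_n n^{is}| \le N$. Carrying out this double-layered optimization cleanly---in particular, balancing the $L^4$ estimate against the mean value estimate at the critical scale---is the main technical step.
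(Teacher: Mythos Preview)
The paper does not prove this theorem; it is quoted from Heath-Brown \cite{HB} and used as a black box (most visibly in the $S_2$ bound, Proposition \ref{prpstn:S2}). So there is no ``paper's proof'' to compare against, only Heath-Brown's original argument.

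Your reduction is fine: opening the square, swapping, and peeling off the two diagonals correctly isolates the main quantity $E=\sum_{r\ne r'}|\sum_{n\sim N} n^{i(t_r-t_{r'})}|^2$, and the small-difference range $|t_r-t_{r'}|\ll N$ is handled by Kusmin--Landau as you say.

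The genuine gap is the remaining range. Your $L^4$/mean-value step gives $E\lessapprox |\mathcal T|^{3/2}T^{1/2}N$, and you propose to recover a further $|\mathcal T|^{1/4}$ by a ``Hal\'asz-style interpolation with the $L^\infty$ bound''. This does not work. Raising the inner polynomial to the $k$th power and applying the mean value theorem yields, after H\"older,
\[
E\ \lessapprox\ |\mathcal T|^{2-1/k}\bigl(T^{1/k}N+N^2\bigr),
\]
so the pair of exponents $(|\mathcal T|^{5/4},T^{1/2})$ is never attained for any $k$; interpolating against $|\sum_n n^{it}|\le N$ only moves you toward $|\mathcal T|^2 N^2$, not toward $|\mathcal T|^{5/4}T^{1/2}N$. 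Squaring the polynomial makes it \emph{longer}, and lengthening is the wrong direction here.

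What is missing is the approximate functional equation (van der Corput $B$-process / Poisson summation): for $|t|\sim T_0\ge N$ one has, up to a phase,
\[
\Bigl|\sum_{n\sim N} n^{it}\Bigr|\ \approx\ \frac{N}{T_0^{1/2}}\Bigl|\sum_{m\lesssim T_0/N} m^{-it}\Bigr|,
\]
replacing the length-$N$ sum by one of length $T_0/N$. This \emph{shortening} is the crucial input in Heath-Brown's argument, and it is precisely the device the present paper records as Lemma \ref{lmmappfunceq} and uses to bound $S_2$. After shortening, one applies a Hal\'asz--Montgomery/duality argument (or, equivalently, feeds in the fourth-moment bound for $\zeta$ on the line $\Re s=1/2$) to the short polynomial; it is this combination that produces the $|\mathcal T|^{5/4}T^{1/2}N$ term. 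Without the reflection step your scheme cannot reach the stated exponent.
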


%
%

In the range $N \in[T^{2/3}, T ]$, the last term can be ignored and the right-hand side is essentially sharp\footnote{If $N\ge T^{2/3}$ then $|\mathcal{T}|^{5/4}T^{1/2}N\le |\mathcal{T}|^{5/4}N^{7/4}=(|\mathcal{T}|^2N)^{1/4}(|\mathcal{T}|N^2)^{3/4}\le |\mathcal{T}|^2N+|\mathcal{T}|N^2$.}.   Theorem \ref{thrm:HeathBrown} gives strong information about our main question if the set $W$ has a lot of additive structure,  such as arithmetic progressions.

We will make this precise using the idea of additive energy,  which we define as follows.  For a finite set $W$,  let

\begin{equation}
E(W):=\#\{w_1,w_2,w_3,w_4\in W:\,|w_1+w_2-w_3-w_4|\le 1\}.
\label{eq:EnergyDef}
\end{equation}

A simple consequence of Theorem \ref{thrm:HeathBrown} is the following (we give a proof, as well as refined bounds, in Section \ref{sec:Energy}).

%
%

\begin{lmm}\label{lmm:BasicEnergy}
Let $N\in [T^{2/3},T]$, $\sigma > 1/2$ and $D(t) = \sum_{N<n\le 2N} b_n n^{i t }$ with $|b_n| \le 1$. Suppose $W \subset [0,T] $ is a 1-separated set such that $|D(t)| > N^\sigma$ for $t\in W$. Then

\[
E(W)\le |W|^3 N^{1-2\sigma+o(1)}+|W|^2 N^{2-2\sigma+o(1)}.
\]

\end{lmm}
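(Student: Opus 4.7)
The plan is to reduce the energy bound to a uniform bound on the shift-multiplicities $r(s) := \#\{(w,w') \in W^2 : |w-w'-s|<1\}$. Since $\sum_s r(s) = |W|^2$, Cauchy--Schwarz immediately gives
\[
E(W) \lesssim |W|^2 \cdot \max_s r(s),
\]
so it suffices to show that for every $s$, $r(s) \lesssim |W|N^{1-2\sigma+o(1)} + N^{2-2\sigma+o(1)}$.

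To bound $r(s)$ for fixed $s$, I would introduce the shift-coincidence set $W_s := \{w \in W : \exists\, w' \in W,\ |w'-w-s| < 1\}$, which is a 1-separated subset of $W$ on which both $|D(w)| \ge N^\sigma$ and $|D(w+s)| \ge N^\sigma$ hold. The classical Hal\'asz--Montgomery dual argument applied to $W_s$---picking unimodular $c_w$ with $\bar c_w D(w) = |D(w)|$, then Cauchy--Schwarz to pass to $\sum_n |\sum_{w\in W_s} \bar c_w n^{iw}|^2$, then Schur's test on the resulting kernel $K(\Delta) = \sum_n n^{i\Delta}$---yields $|W_s| \lesssim N^{2-2\sigma+o(1)}$, accounting for the second term in the lemma.

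To obtain the first term $|W|N^{1-2\sigma+o(1)}$, I would apply Heath-Brown's Theorem \ref{thrm:HeathBrown} with $\mathcal{T}=W$, rather than using the naive Mean Value Theorem on the ambient interval $[0,T]$. Heath-Brown gives
\[
\sum_{(w_1,w_2)\in W^2}|D(w_1-w_2)|^2 \lesssim |W|^2 N + |W| N^2 + |W|^{5/4} T^{1/2} N,
\]
whose right-hand side scales with $|W|$ rather than $T$. Combined with a Cauchy--Schwarz step in the dual Hal\'asz--Montgomery argument for $W_s$, and crucially exploiting $W_s \subset W$ so that all difference-kernel sums over $W_s^2$ are controlled by those over $W^2$, this should yield $|W_s| \lesssim |W| N^{1-2\sigma+o(1)}$. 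The main obstacle will be precisely this passage from $T$ to $|W|$: the naive Mean Value Theorem only sees the length of the ambient interval and gives the weaker $|W_s| \lesssim T N^{1-2\sigma+o(1)}$, so isolating a $|W|$-dependence requires Heath-Brown's bilinear estimate applied to the structurally richer set $W$ combined with careful bookkeeping of the containment $W_s \subset W$.

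Once the bound $|W_s| \lesssim |W|N^{1-2\sigma+o(1)} + N^{2-2\sigma+o(1)}$ is in hand, substitution into the Cauchy--Schwarz reduction immediately gives $E(W) \le |W|^3 N^{1-2\sigma+o(1)} + |W|^2 N^{2-2\sigma+o(1)}$, as claimed.
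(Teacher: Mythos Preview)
Your reduction is fatally lossy at the very first step. You propose
\[
E(W)\lesssim |W|^2\max_s r(s),
\]
and then aim to show $r(s)\lesssim |W|N^{1-2\sigma+o(1)}+N^{2-2\sigma+o(1)}$ for every $s$. But $r(0)=|W|$ (the diagonal pairs $w=w'$), so your target bound on $r(s)$ would force $|W|\lesssim |W|N^{1-2\sigma+o(1)}+N^{2-2\sigma+o(1)}$, which is \emph{false} whenever $\sigma$ is bounded above $1/2$ and $|W|\gg N^{2-2\sigma}$. Equivalently, since $\max_s r(s)\ge r(0)=|W|$, your reduction can never produce anything better than $E(W)\lesssim |W|^3$, the trivial bound; yet the lemma asserts something strictly stronger in precisely this regime. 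Excluding $s=0$ does not help either, since for $W$ close to an arithmetic progression one can have $r(s)\sim|W|$ at many nonzero shifts.

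There is a second gap: your claim that the dual Hal\'asz--Montgomery argument with Schur's test gives $|W_s|\lesssim N^{2-2\sigma+o(1)}$ is exactly Montgomery's large value conjecture (Conjecture~\ref{cnjctr:Montgomery}), which is open for $\sigma\le 3/4$. The kernel $K(\Delta)=\sum_{n\sim N}n^{i\Delta}$ has size $\asymp N^{1/2}$ for many $|\Delta|\sim N$, so Schur's test only yields $|W_s|\lesssim N^{2-2\sigma}+|W_s|N^{3/2-2\sigma}$, giving nothing for $\sigma\le 3/4$.

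The paper avoids both problems by \emph{not} passing through $\max_s r(s)$. Instead it establishes directly (Lemma~\ref{lmm:Energy1}) that
\[
E(W)\lessapprox N^{-2\sigma}\sum_{n_1,n_2\sim N}\Bigl|R\Bigl(\frac{n_1}{n_2}\Bigr)\Bigr|^3,\qquad R(v):=\sum_{t\in W}v^{it},
\]
by inserting $|D(w_4)|^2$ into the energy count, smoothing $w_4\approx w_1+w_2-w_3$, and expanding. One then bounds one factor trivially by $|R|\le|W|$ and applies Heath-Brown's Theorem~\ref{thrm:HeathBrown} to the remaining $\sum_{n_1,n_2}|R(n_1/n_2)|^2$, which gives $|W|^2N+|W|N^2$ (the $|W|^{5/4}T^{1/2}N$ term being dominated when $N\ge T^{2/3}$). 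This yields the lemma in two lines. The crucial difference is that the cubic moment of $R$ retains the coupling among all three of $w_1,w_2,w_3$, whereas your $\max_s r(s)$ step throws away the averaging over $s$ and is then defeated by the diagonal.
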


If $E(W)$ is very large, say $E(W)> |W|^3T^{-o(1)}$, then $E(W)$ is larger than the first term on the right hand side, so it must be bounded by the second term. This implies Conjecture \ref{cnjctr:Montgomery} for Dirichlet polynomials $D$ with $E(W)\ge |W|^3 T^{-o(1)}$. Thus we can obtain improved bounds for Dirichlet polynomials whose large value set has a lot of additive structure.

We introduce a new method that gives good bounds for Dirichlet polynomials on sets of small energy.   We outline our method in the next section.

\begin{rmk}
Many earlier results on large values of Dirichlet polynomials (such as those stemming from the Mean Value Theorem or the Montgomery-Huxley-Hal\'asz large values estimate) allowed one to relax the $\ell^\infty$ constraint $|b_n|\le 1$ on the coefficients to a weaker $\ell^2$ constraint $(\sum_{n=N}^{2N}|b_n|^2)^{1/2}=O(N^{1/2})$. Our argument crucially relies on the stronger $\ell^\infty$ assumption to bound the additive energy $E(W)$ efficiently, and we do not know how to obtain Theorem \ref{thrm:LargeValues} if we only have $\ell^2$ bounds on the coefficients. The remark after Lemma \ref{lmm:Energy1} highlights where we use this assumption.
\end{rmk}

%
%

\subsection{Notations and conventions} \label{subsecdeltalessapprox}

We write $A \ll B$ to mean that $ |A| \le C B$ for an absolute constant $C$, and $A \ll_z B$ to denote that the constant $C$ may depend on the parameter $z$. We write $A\asymp B$ to mean $A\ll B$ and $B\ll A$ both hold, and $A\sim B$ to mean $B<A\le 2B$. Similarly,  we write $A \lessapprox B$ to mean that for any $\epsilon > 0$, there is a constant $C(\epsilon)>0$ depending only on $\epsilon$ such that $|A| \le C(\epsilon) T^{\epsilon} B$ for all large $T$, and $A \lessapprox_z B$ to denote a dependency on a parameter $z$. Asymptotic quantities such as $o(1)$ are interpreted as $T\rightarrow \infty$.

We use $e(x):=e^{2\pi i x}$ to denote the complex exponential. Summations will run over integers unless specified otherwise.

%
%

\subsection{Acknowledgements.} LG would like to thank Yuqiu Fu for many interesting discussions about this problem.  
LG is supported by a Simons Investigator Award. JM is supported  by the European Research Council (ERC) under the European Union’s
 Horizon 2020 research and innovation programme (grant agreement No 851318). We would like to thank the anonymous referees for a number of helpful comments and corrections. For the purposes of open access, the authors have applied a CC-BY license to any author accepted manuscript arising from this submission.

%
%
%
%

\section{Sketch outline}

For a heuristic description of our argument, let us consider a critical case of previous zero density estimates, where $\sigma = 3/4$ and we wish to improve upon the bound $|W|\lessapprox TN^{-1/2}$ for any set $W$ of separated points in an interval of length $T$ such that $|D(t)|>N^{3/4}$ when $t\in W$. We focus on the situation when $T=N^{1+\delta}$ for some small constant $\delta>0$ (a corresponding improvement for larger values of $T$ then follows by subdivision), and suppress many technical details (such as the presence of smoothing in most summations) for exposition. 

Let $M$ be the $|W|\times N$ matrix with entries

\[
M_{t,n}=n^{it}
\]

for $t\in W$ and $n\sim N$. We see that if $\mathbf{b}=(b_n)_{n\sim N}$ then

\[
D(t)=\sum_{n}b_n n^{it}=(M\mathbf{b} )_t,
\]

and so we have (recalling $|b_n|\le 1$)

\[
N^{3/2}|W|=N^{2\sigma} |W|<\sum_{t\in W}|D(t)|^2=\|M\mathbf{b}\|_2^2\le \|M\|^2 \|\mathbf{b}\|_2^2\le N\|M\|^2,
\]

where $\|M\|$ is the matrix (operator) norm of $M$. Thus we wish to improve upon the bound $\|M\|\lessapprox T^{1/2}$ which follows from the Mean Value Theorem (or the Large Values Estimate), and we might hope that $\|M\|\approx |N|^{1/2}$. We note that

\[
\|M\|= s_1(M)
\]

where $s_1(M)$ is the largest singular value of $M$, which is also the square-root of the largest eigenvalue of $M^*M$. Substituting this in above, we find

\[
|W|\le N^{-1/2}s_1(M)^2.
\]

A simple bound for $s_1(M)$ is then given by the trace of powers of $M^*M$; for any $r\in \mathbb{N}$

\[
s_1(M)\le \tr((M^*M)^r)^{1/2r}.
\]

We might hope $\tr((M^*M)^r)\approx |W|N^{r}$, which would imply $s_1(M)\le N^{1/2}|W|^{1/2r}$. 
Unfortunately it appears to be very difficult to estimate these traces accurately for large $r$, and we would only improve upon the bound $|W|\lessapprox TN^{-1/2}$ if $(TN^{-1/2})^{1/r}\le TN^{-1}$, which requires $r$ to be large. Nevertheless, a variant of this bound for $r=3$ (which reduces the contribution from some of the trivial terms) allows one to show a bound similar to

\[
s_1(M)^6\lessapprox \Bigl|\sum_{\substack{t_1,t_2,t_3\in W\\ |t_j-t_k|>T^\epsilon\,\forall j\ne k}}\sum_{n_1,n_2,n_3\sim N}n_1^{i(t_1-t_2)}n_2^{i(t_2-t_3)}n_3^{i(t_3-t_1)}\Bigr|.
\]

The right hand side would be $\tr((M^*M)^3)$ if we didn't have the lower bounds on $|t_i-t_j|$ (and so we avoid the $t_1=t_2=t_3$ terms). To improve upon the bound $|W|\lessapprox TN^{-1/2}$ we wish to show the right hand side is a bit smaller than $T^3$.

The natural approach to estimating the right hand side would be to apply the approximate functional equation (or Poisson summation and stationary phase) to each of the inner sums, which would yield a bound of roughly

\[
\frac{N^3}{T^{3/2}}\Bigl|\sum_{t_1,t_2,t_3\in W}\sum_{m_1,m_2,m_3\sim T/N}m_1^{i(t_1-t_2)}m_2^{i(t_2-t_3)}m_3^{i(t_3-t_1)}c_{t_1-t_2}c_{t_2-t_3}c_{t_3-t_1}\Bigr|
\]

for some coefficients $c_{t}$ of size 1. Unfortunately the $c_{t_i-t_j}$ coefficients link the variables $t_1,t_2$ and $t_3$, and this makes it difficult to show any cancellation over these summations. Without any cancellation in the $t_1,t_2,t_3$ sums we would be limited to a bound of $|W|^3N^{3/2}\approx T^3$ even if we obtained square-root cancellation in the sums over $m_1,m_2,m_3$, and so this would fail to give the desired improvement on $T^3$ (but could potentially give results if $\sigma>3/4$).

A key observation is that it can be beneficial to avoid the use of stationary phase. Applying Poisson summation without simplifying the Fourier integrals yields a bound of roughly

\begin{align*}
&N^3\sum_{t_1,t_2,t_3\in W}\sum_{|m_1|,|m_2|,|m_3|\sim T/N}\int_{[1,2]^3}e^{-2\pi i N\mathbf{m}\cdot \mathbf{u}}\Bigl(\frac{u_1}{u_3}\Bigr)^{it_1}\Bigl(\frac{u_2}{u_1}\Bigr)^{it_2}\Bigl(\frac{u_3}{u_1}\Bigr)^{it_3}d\mathbf{u}\\
 &\le N^3\sum_{|\mathbf{m}|\sim T/N}\Bigl|\int_{[1,2]^3}e^{-2\pi i N\mathbf{m}\cdot \mathbf{u}} R\Bigl(\frac{u_1}{u_3}\Bigr)R\Bigl(\frac{u_2}{u_1}\Bigr)R\Bigl(\frac{u_3}{u_2}\Bigr)d\mathbf{u}\Bigr|,
\end{align*}

where $R(x):=\sum_{t \in W}|x|^{it}$. By not simplifying the integrals over $\mathbf{u}$ directly we have given up a factor of $T^{3/2}$, but now the variables $t_1,t_2,t_3$ are nicely separated from one another, and we can hope to show cancellation in these sums by showing the $R$ function exhibits cancellation. However, even square-root cancellation in $R(x)$ would only win back a factor of $|W|^{3/2}$ which is less than the $T^{3/2}$ factor we gave up by not applying stationary phase. Therefore we need to exploit simultaneous cancellation in the integral over $\mathbf{u}$ and the $R$ functions.

The next important observation is that the arguments of the $R$ functions only lie in a two dimensional subvariety $z_1z_2z_3=1$. If we change variables to $v_1=u_1/u_3$ and $v_2=u_2/u_3$ then the integral above is roughly

\[
\int_{[1/2,2]^2}\Bigl(\int_{[1,2]} e^{-2\pi i N (m_1 v_1+ m_2 v_2+m_3)u_3}du_3\Bigr)R(v_1)R\Bigl(\frac{v_2}{v_1}\Bigr)R\Bigl(\frac{1}{v_2}\Bigr) dv_1 dv_2.
 \]
 
There is a large amount of cancellation in the inner integral unless $m_1 v_1+ m_2 v_2+m_3\approx 0$, and this allows us to win back a factor of $T$. We are then reduced to bounding expressions of the form

\begin{equation}
 \frac{N^3}{T}\sum_{|\mathbf{m}|\sim T/N}\Bigl| \int_{\substack{(v_1,v_2)\in [1/2,2]^2\\ m_1 v_1+ m_2 v_2+m_3=0}} R(v_1)R\Bigl(\frac{v_2}{v_1}\Bigr)R\Bigl(\frac{1}{v_2}\Bigr) dv_1\Bigr|.
\label{eq:KeySum}
\end{equation}

If we had uniform square-root cancellation in the $R$ functions, we would now get a bound of $T^2 |W|^{3/2}$, which comfortably beats the desired bound of $T^3$ when $|W|\approx TN^{-1/2}$, and so we would get a corresponding improvement to the large value estimates and zero density results.

Of course, we cannot expect to prove uniform square-root cancellation in the $R$ function for arbitrary sets $W$. Nevertheless, one can show that $R$ does exhibit cancellation on average; $\|R(v)\|_{L^2([1/2,2])}^2\lessapprox |W|$ and $\|R(v)\|_{L^4([1/2,2])}^4\lessapprox E(W)$, where $E(W)$ from \eqref{eq:EnergyDef} counts approximate additive quadruples in $W$. Treating the $\mathbf{m}$ summation trivially, this would give a bound

\begin{equation}
T^2|W|^{1/2}E(W)^{1/2},
\label{eq:CrudeEnergy}
\end{equation}

which would beat the target of $T^3$ if $E(W)$ is a bit smaller than $T^2/|W|\approx N^2 |W|^3/T^2$. Thus we obtain good bounds whenever $E(W)$ is small. In particular, when $T\approx N^{1+\delta}$ with $\delta$ small, this means that we can handle any set $W$ except those for which the additive energy is close to the maximal possible value of $|W|^3$.

We would like to complement this with an argument for when $E(W)$ is large. As mentioned in the introduction, Heath-Brown's result becomes useful in this situation. In this case, Lemma \ref{lmm:BasicEnergy} implies that $E(W)\ll |W|^2N^{1/2}\approx |W|^3N/T$, which isn't quite strong enough to cover all ranges. A refinement of this lemma can improve this to $E(W)\ll |W|^3N^2/T^2$ (at least when $\delta$ is small and $|W|\approx TN^{-1/2}$), which then allows us to get a small improvement in all cases except when $E(W)\approx |W|^3N^2/T^2$.

To overcome this final obstacle when $E(W)\approx |W|^3N^2/T^2$, we go back to \eqref{eq:KeySum}, and exploit the averaging over $\mathbf{m}$. After a change of variables we need to consider

\[
\frac{N^3}{T} \sum_{|m_1|,|m_2|,|m_3|\sim T/N}\int_{v_1 \in [1/2,2]}  \Big| R ( v_1 )  R \Big( \frac{ m_1 v_1 + m_3}{m_2 v_1} \Big)  R \Big( \frac{m_1 v_1 + m_3}{m_2} \Big) \Big| dv_1.
\]

The only way the bound above could be tight is if there is a sparse set $U$ (of measure roughly $TN^{-3/2}$) on which $R(u)$ is large (taking values of size roughly $N^{1/2}$), and such that for many $u\in U$ and many $|m_1|,|m_2|,|m_3|\sim T/N$ we have $(m_1u+m_3)/m_2\in U$ and  $(m_1u+m_3)/(m_2u)\in U$. We show that there cannot be a small set $U$ which has this property of being approximately closed under many affine transformations $u\mapsto (m_1u+m_2)/m_3$, and this ultimately leads to an improvement of \eqref{eq:CrudeEnergy} to roughly

\[
T N |W|^{1/2}E(W)^{1/2}.
\]

This now gives an improvement on the desired bound of $T^3$ whenever $E(W)$ is smaller than $|W|^3$, and so the bounds obtained on $E(W)$ stemming from Heath-Brown's result above are now sufficient to give an improvement for any size of $E(W)$. Therefore we obtain an improvement on the bound $|W|\lessapprox TN^{-1/2}$ for arbitrary $W$.

%
%
%
%

\section{Reduction to Main Proposition}\label{secredmain}

In this section we reduce Theorem \ref{thrm:LargeValues} to a similar but technically more convenient proposition where we have inserted a smoothing in the $n$ variable (which aids the later Fourier analysis) and used subdivision to specialize to the case $N=T^{4/5}$ (which is when the final two terms in Theorem \ref{thrm:LargeValues} coincide). Throughout the rest of the paper we fix a smooth function $w:\mathbb{R}\rightarrow\mathbb{R}_{\ge 0}$ supported on $[1,2]$ with $\|w^{(j)}\|_\infty=O_j(1)$ for all $j\in \mathbb{Z}_{\ge 0}$ and with $w(t)=1$ for $t\in [6/5,9/5]$.

%
%

\begin{prpstn}\label{prpstn:KeyProp}
Let $\sigma\in [7/10,8/10]$ and $\epsilon>0$. Let $b_n$ be a sequence of complex numbers with $|b_n| \le 1$ and $W$ be a set of $T^\epsilon$-separated points in an interval of length $T=N^{6/5}$ such that

\[
\Bigl|\sum_n w\Bigl(\frac{n}{N}\Bigr)b_n n^{it}\Bigr|\ge N^\sigma
\]

for all $t\in W$. Then we have

\[
|W|\le TN^{(12-20\sigma)/5+o_\epsilon(1)}.
\]

\end{prpstn}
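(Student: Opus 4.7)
The plan is to run the trace-method argument sketched in Section 2, combined with a Heath-Brown-type upper bound on the additive energy of $\cW$. Let $M$ be the $|\cW|\times \#\{n\sim N\}$ matrix with entries $M_{t,n} := w(n/N)^{1/2} n^{it}$; the large-value hypothesis together with Cauchy--Schwarz implies $\|M\|^2 \gtrsim N^{2\sigma-1}|\cW|$, so it suffices to produce an upper bound on $\|M\|^6 \le \tr\bigl((M^*M)^3\bigr)$. Expanding the trace and using the $T^\epsilon$-separation of $\cW$ to isolate the diagonal triples $(t_1,t_2,t_3)$ with $|t_i-t_j|\le T^\epsilon$ (whose contribution is acceptable via a crude application of Lemma \ref{lmm:BasicEnergy}) reduces matters to bounding
\[
S := \sum_{\substack{t_1,t_2,t_3\in\cW \\ |t_i-t_j|>T^\epsilon}} \prod_{j=1}^{3}\sum_{n_j} w(n_j/N)\,n_j^{i(t_j-t_{j+1})}.
\]

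Next, I apply Poisson summation in each $n_j$-sum, deliberately not simplifying the resulting Fourier integrals by stationary phase, so as to keep the $t$-variables decoupled across the three factors. Introducing $R(v):=\sum_{t\in\cW}v^{it}$, this converts $S$ into an expression of the shape
\[
S \approx N^3 \sum_{|m_1|,|m_2|,|m_3|\lesssim T/N}\Bigl|\int_{[1,2]^3} e^{-2\pi i N\mathbf{m}\cdot\mathbf{u}}\,R\!\left(\tfrac{u_1}{u_3}\right) R\!\left(\tfrac{u_2}{u_1}\right) R\!\left(\tfrac{u_3}{u_2}\right) d\mathbf{u}\Bigr|,
\]
and the key structural observation is that the three arguments of $R$ live on the two-dimensional surface $\{z_1z_2z_3=1\}$. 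Changing variables to $v_1=u_1/u_3$, $v_2=u_2/u_3$ and performing the integral in $u_3$ produces an oscillatory factor forcing $m_1v_1+m_2v_2+m_3\approx 0$, gaining a factor of $T$ and reducing the estimation of $S$ to an expression of the shape
\[
\frac{N^3}{T}\sum_{|\mathbf{m}|\lesssim T/N}\Bigl|\int_{\substack{(v_1,v_2)\in[1/2,2]^2 \\ m_1v_1+m_2v_2+m_3=0}} R(v_1)\,R\!\left(\tfrac{v_2}{v_1}\right)R\!\left(\tfrac{1}{v_2}\right) dv_1\Bigr|.
\]

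I would then bound this last expression using $\|R\|_{L^2([1/2,2])}^2 \lessapprox |\cW|$ together with $\|R\|_{L^4([1/2,2])}^4 \lessapprox E(\cW)$, applying H\"older line-by-line in the $\mathbf{m}$-sum and exploiting the averaging over $\mathbf{m}$ to obtain a bound of the form $S \lessapprox T N\,|\cW|^{1/2} E(\cW)^{1/2}$. Combined with $\|M\|^6 \le S$ this yields the target $|\cW|\le TN^{(12-20\sigma)/5+o(1)}$ whenever $E(\cW)\lesssim |\cW|^3 N^2/T^2$. For the complementary range $E(\cW)\gtrsim |\cW|^3 N^2/T^2$, I would invoke a refinement of Lemma \ref{lmm:BasicEnergy} (proved in Section \ref{sec:Energy}) which converts such a lower bound on $E(\cW)$ directly into the required upper bound on $|\cW|$. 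Combining both regimes completes the proof.

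The main obstacle is the borderline case $E(\cW)\approx |\cW|^3 N^2/T^2$: the crude H\"older bound $S\lessapprox T^2|\cW|^{1/2}E(\cW)^{1/2}$ obtained by summing trivially over $\mathbf{m}$ is just barely too weak, and one must squeeze out an additional factor of $T/N$. Saturation of this crude bound would force $|R|$ to concentrate on a small set $U\subset[1/2,2]$ that is approximately closed under the two-parameter family of affine maps $v\mapsto (m_1v+m_3)/m_2$ and $v\mapsto (m_1v+m_3)/(m_2v)$ for many $|m_1|,|m_2|,|m_3|\lesssim T/N$. Ruling out the existence of such a small, approximately affinely closed set is the genuinely new ingredient and is where I expect the bulk of the technical work to lie.
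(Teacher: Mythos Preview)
Your outline follows the paper's strategy closely in spirit, but there is one genuine gap that breaks the argument as written: the treatment of the diagonal term in $\tr\bigl((M^*M)^3\bigr)$.

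You write $\|M\|^6 \le \tr\bigl((M^*M)^3\bigr)$ and then claim that the contribution of the diagonal triples $t_1=t_2=t_3$ is ``acceptable via a crude application of Lemma~\ref{lmm:BasicEnergy}''. It is not. The diagonal contributes $\asymp |\cW|\,N^3$ to the trace, and feeding this through $|\cW|\lesssim N^{1-2\sigma}s_1(M)^2$ yields only $|\cW|\lessapprox N^{3-3\sigma}$. For every $\sigma\in[7/10,8/10]$ this is strictly worse than the target $TN^{(12-20\sigma)/5}=N^{(18-20\sigma)/5}$ (e.g.\ at $\sigma=3/4$ one gets $N^{3/4}$ versus the target $N^{3/5}$). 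Since the trace is a sum of nonnegative eigenvalue cubes, there is no cancellation to exploit, and Lemma~\ref{lmm:BasicEnergy} says nothing about this term. The paper's fix is Lemma~4.2: rather than $s_1^6\le \tr\bigl((MM^*)^3\bigr)$, one proves
\[
s_1(M)^6 \lesssim \tr\bigl((MM^*)^3\bigr)-\frac{\tr(MM^*)^3}{|\cW|^2} + \Bigl(\frac{\tr(MM^*)}{|\cW|}\Bigr)^3,
\]
and the subtracted quantity is exactly (up to negligible error) the $m_1=m_2=m_3=0$ term $I_0$ after Poisson summation. This is what produces the acceptable $N^{2-2\sigma}$ term in Proposition~\ref{prpstn:ImBound} and leaves only $\sum_{m\ne 0}I_m$ to bound. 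Without this subtraction the whole scheme stalls at $r=3$.

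A second, smaller omission: after Poisson summation you pass directly to the case where all three $m_i$ are nonzero. The terms with exactly two nonzero $m_i$ (the paper's $S_2$) do not fall under the $m_1v_1+m_2v_2+m_3\approx 0$ analysis in the same way; the paper handles them separately via the approximate functional equation and Heath-Brown's Theorem~\ref{thrm:HeathBrown} (Proposition~6.1), and these terms are responsible for two of the competing bounds in the final algebra. The rest of your plan---the $u_3$-integration gaining a factor of $T$, the $L^2$/$L^4$ control of $R$ by $|\cW|$ and $E(\cW)$, the affine-equidistribution input to upgrade $T^2$ to $TN$, and the complementary energy bound from Section~\ref{sec:Energy}---matches the paper's argument.
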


\begin{proof}[Proof of Theorem \ref{thrm:LargeValues} assuming Proposition \ref{prpstn:KeyProp}]
As mentioned in the introduction, the result follows from \eqref{eq:ClassicalLargeValue} if $V\le N^{7/10+o(1)}$ or if $V\ge N^{8/10-o(1)}$, so we may assume $V\in [4N^{7/10},N^{8/10}]$, in which case $N^2V^{-2}\le N^{18/5}V^{-4}$. Similarly, Theorem \ref{thrm:LargeValues} follows from \eqref{eq:ClassicalLargeValue} if $N\ge T$ since then $R\le T^{o(1)}N^2V^{-2}$, so we may assume $N<T$. By splitting $D(t)$ into 3 separate pieces (and using the triangle bound), we see that it suffices to show the result when $b_n=0$ unless $n\in [6N/5,9N/5]$. Since the function $w$ is 1 on $[6/5,9/5]$, we then have that $b_n=b_n w(n/N)$, so we may insert the weight $w(n/N)$. Finally, having inserted the smooth weights we now relax the vanishing condition on the $b_n$. Thus, letting $V=N^\sigma$, it suffices to show that whenever $\sigma \in [7/10,8/10]$, $N\ll T$, $(a_n)$ is a 1-bounded complex sequence and $\cW$ is a set of $1$-separated points such that 

\[
\Bigl|\sum_{n}w\Bigl(\frac{n}{N}\Bigr)a_n n^{it}\Bigr|\ge N^{\sigma}
\]

for each $t\in \cW$, we have 

\[
|\cW|\le T^{o(1)}(N^{18/5-4\sigma}+TN^{12/5-4\sigma}).
\]

We now fix $\eta>0$ and choose $\cW' \subset \cW$ so that $\cW'$ is $T^{\eta}$-separated and $|\cW'| \ge |\cW|/T^{\eta}$ (this can be achieved by picking the smallest element of $\cW$ and then repeatedly choosing the next smallest element which is at least $T^{\eta}$ away from all picked elements).   If $T \le N^{6/5}$, then we apply Proposition \ref{prpstn:KeyProp} to bound $\cW'$ directly (taking $\epsilon=\eta/2$ so that $\cW'$ is $N^{6\epsilon/5}$-separated since $M\ll T$), which implies that 

\[
|\cW|\le T^{\eta}|\cW'|\le T^\eta N^{(18-20\sigma)/5+o_\eta(1)}.
\]

Letting $\eta\rightarrow 0$ sufficiently slowly then gives the result in this case.  If instead $T > N^{6/5}$,  then we divide $\cW'$ into $\lceil T/ N^{6/5}\rceil$ subsets $\cW_j'$  each supported on an interval of length $N^{6/5}$,  and we apply Proposition \ref{prpstn:KeyProp} to bound each $\cW_j'$ separately. This gives

\[
|\cW|\le T^\eta\sum_{j\le \lceil T/N^{6/5} \rceil }|\cW_j'|\le T^{1+\eta} N^{(12-20\sigma)/5+o_\eta(1)}.
\]

Letting $\eta\rightarrow 0$ sufficiently slowly then gives the result in this case too.
\end{proof}

\section{The  matrix \texorpdfstring{$M_W$}{MW} and its singular values}

Now we begin to work on the proof of Proposition \ref{prpstn:KeyProp}.  We will work with the smoothed version $D_N(t)$ of $D(t)$ from Proposition \ref{prpstn:KeyProp}

\begin{equation}
D_N(t) :=\sum_{n} w\Bigl(\frac{n}{N}\Bigr)b_n n^{it}, 
\label{eq:DNDef}
\end{equation}

\noindent where $w$ is the smooth bump supported on $[1,2]$ defined in Section \ref{secredmain}. Similarly, given a set $W\subseteq\mathbb{R}$, let $M_W$ be the $|W|\times N$ matrix with smoothed entries
\begin{equation}
(M_W)_{t,n}=w(n/N)n^{it}, 
\label{eq:MWDef}
\end{equation}

\noindent where $t \in W$ and $n \sim N$. 

%
%

\begin{lmm}[Large values of Dirichlet polynomials controlled by singular values]\label{lmm:SpectralBound}
Let $M_W$ be the matrix defined in \eqref{eq:MWDef}, and $s_1(M_W)$ its largest singular value. If $|D_N(t)| \ge N^\sigma$ on $W$ and if $|b_n| \le 1$, then we have

\[
|W|\ll N^{1-2\sigma}s_1(M_W)^2.
\]

\end{lmm}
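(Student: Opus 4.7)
The proof is essentially a one-line application of the definition of the operator norm, so my plan is short and the only ``obstacle'' is bookkeeping the support of $b_n$ correctly.

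First I would observe that the matrix $M_W$ is set up precisely so that $M_W$ acts on the coefficient vector $\mathbf{b} = (b_n)_{n \sim N}$ to produce the vector of values of $D_N$: from the definition \eqref{eq:MWDef},
\[
(M_W \mathbf{b})_t = \sum_n w(n/N)\, n^{it}\, b_n = D_N(t).
\]
So the hypothesis $|D_N(t)| \ge N^\sigma$ for all $t \in W$ immediately gives the lower bound
\[
|W|\, N^{2\sigma} \le \sum_{t \in W} |D_N(t)|^2 = \|M_W \mathbf{b}\|_2^2.
\]

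Next I would apply the defining property of the largest singular value, $\|M_W \mathbf{b}\|_2 \le s_1(M_W)\, \|\mathbf{b}\|_2$, together with the trivial bound $\|\mathbf{b}\|_2^2 = \sum_{n\sim N}|b_n|^2 \lesssim N$ (since $|b_n|\le 1$ and only $O(N)$ values of $n$ contribute thanks to the support of $w(n/N)$). This gives
\[
|W|\, N^{2\sigma} \le s_1(M_W)^2\, \|\mathbf{b}\|_2^2 \lesssim N\, s_1(M_W)^2,
\]
and rearranging yields $|W|\lesssim N^{1-2\sigma} s_1(M_W)^2$, as desired.

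There is no real obstacle here: the content of the lemma is simply to package the hypothesis on $D_N$ into a spectral statement about $M_W$, which is exactly what allows the subsequent analysis (bounding $s_1(M_W)$ via traces of $(M_W^* M_W)^r$) to begin. The only point requiring any care is that $\mathbf{b}$ should be viewed as a vector indexed by $n\sim N$ so that the $\ell^2$ norm is $O(\sqrt{N})$ rather than being extended trivially to a larger index set.
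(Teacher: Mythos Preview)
Your proof is correct and essentially identical to the paper's: both observe $D_N(t)=(M_W\mathbf{b})_t$, bound $|W|N^{2\sigma}\le \|M_W\mathbf{b}\|_2^2\le s_1(M_W)^2\|\mathbf{b}\|_2^2$, and finish with $\|\mathbf{b}\|_2^2\lesssim N$. The only cosmetic difference is that the paper writes out the singular value decomposition explicitly where you invoke the operator-norm property directly.
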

\begin{proof}
Let $\mathbf{b}$ be the vector with components $b_n$.   Then note that for each $t$ in $W$, 

\[ 
D_N(t) =\sum_{n} w(n/N)b_n n^{it}= (M_W \mathbf{b})_t.  
\]

Therefore we can relate the behavior of $D_N$ on $W$ (for arbitrary $\mathbf{b}$) to properties of the matrix $M_W$, in particular its singular values.  We write $s_j(M_W)$ for the $j^{th}$ singular value of $M_W$,  with the convention that $s_1(M_W) \ge s_2(M_W) \ge ...\ge s_k(M_W)$ and $k=\min(|W|,N)$ is the number of singular values.   Let $M_W$ have singular value decomposition $M_W=U\Sigma V$, so that $\Sigma$ is a rectangular matrix with $\Sigma_{ii}=s_i(M_W)$ and $\Sigma_{ij}=0$ if $i\ne j$, and $U$, $V$ are unitary matrices.

If $|D_N(t)| \ge N^\sigma$ on $W$,  then we see

\begin{align*}
|W| N^{2 \sigma} \le \sum_{t\in W}|D_N(t)|^2=(M_W\mathbf{b})^* M_W \mathbf{b}&=(V\mathbf{b})^* \Sigma^2 V\mathbf{b}\\
&\le s_1(M_W)^2 \| V\mathbf{b} \|_{\ell^2}^2\\
&= s_1(M_W)^2 \| \mathbf{b} \|_{\ell^2}^2.
\end{align*}
Finally, if $|b_n|\le 1$ then $\| \mathbf{b} \|_{\ell^2}^2\ll N$. Substituting this into the expression above and rearranging now gives the result.
\end{proof}

Now $s_1(M_W)$ is equal to the square root of the largest eigenvalue value of the $|W|\times |W|$ matrix $M_W M_W^*$, with entries

\[
(M_W M_W^*)_{t_1,t_2}=\sum_{n} w \left( \frac{n}{N} \right)^2 n^{i(t_1-t_2)}.
\]

A simple bound for $s_1(M_W)$ is therefore to use the trace: for any integer $r\ge 1$ we have

\[
s_1(M_W)^2= s_1(M_W M_W^*)\le \Bigl(\sum_{j=1}^ks_j(M_W M_W^*)^r\Bigr)^{1/r}=\tr((M_W M_W^*)^r)^{1/r}.
\]

One might guess that $s_j(M_W)\lessapprox N^{1/2}$ for all $j$, in which case we would have 

\[
\tr((M_W M_W^*)^r)^{1/r}\lessapprox |W|^{1/r}N.
\]

 If one could establish such a sharp bound on $\tr((M_W^*M_W)^r)$ for large $r$, this would give Conjecture \ref{cnjctr:Montgomery}. Unfortunately we do not know how to obtain good bounds when $r\ge 4$, so we work with $r=3$. In this case, even a sharp bound 
 
 \[
 \tr((M_W M_W^*)^3)^{1/3}\lessapprox |W|^{1/3}N
 \]
 
  would only yield $|W|\lessapprox N^{3-3\sigma}$, which is worse than the bounds established by previous works. To get around this issue, we note that if we are in the extreme scenario when

\[
\frac{1}{k}\sum_{i=1}^{k} s_i(M_W)^6 =\Bigl(\frac{1}{k}\sum_{i=1}^{k} s_i(M_W)^2\Bigr)^3
\]

then in fact we must have that all the singular values are the same, and so 

\[
s_1(M_W)=\tr((M_WM_W^*)^3)^{1/6}k^{-1/6},
\]

 a significant improvement on the bound $\tr((M_WM_W^*)^3)^{1/6}$ we had before. Similarly, we would expect that if $\tr((M_WM_W^*)^3)$ is close to $\tr(M_WM_W^*)^3/k^{2}$, then we would also get an improved bound on $s_1(M_W)$ since most of the contribution to $\tr((M_WM_W^*)^3)$ would be coming from the many other singular values. The following lemma makes this precise, stating that we can essentially replace $\tr((M_WM_W^*)^3)$ with the difference 
 
 \[
 \tr((M_WM_W^*)^3)-\frac{\tr(M_WM_W^*)^3}{k^{2}}
 \]
 
  for the purposes of bounding $s_1(M_W)$.

%
%

\begin{lmm}[Bound for singular values in terms of traces]
Let $A$ be an $m\times n$ complex matrix. Then we have
\[
s_1(A)\le 2\Bigl(\tr((AA^*)^3)-\frac{\tr(AA^*)^3}{m^2}\Bigr)^{1/6}+2\Bigl(\frac{\tr(AA^*)}{m}\Bigr)^{1/2}.
\]
\end{lmm}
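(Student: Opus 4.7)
The plan is to reduce this to an elementary inequality on the eigenvalues of the positive semidefinite matrix $AA^*$. Let $\lambda_1\ge\cdots\ge\lambda_m\ge 0$ denote these eigenvalues, so that $s_1(A)^2=\lambda_1$, $\tr((AA^*)^r)=\sum_i\lambda_i^r$, and, with $\bar\lambda:=m^{-1}\sum_i\lambda_i=m^{-1}\tr(AA^*)$, the quantity $\tr(AA^*)^3/m^2$ equals $m\bar\lambda^3$. Thus it suffices to prove
\[
\sqrt{\lambda_1}\le 2\Bigl(\sum_i\lambda_i^3-m\bar\lambda^3\Bigr)^{1/6}+2\sqrt{\bar\lambda}.
\]

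Set $\mu_i:=\lambda_i-\bar\lambda$, so $\sum_i\mu_i=0$, $\mu_1\ge 0$, and, crucially, $\mu_i\ge-\bar\lambda$ for every $i$ (this is just the assertion $\lambda_i\ge 0$, coming from positive semidefiniteness). A direct expansion around the mean gives
\[
\sum_i\lambda_i^3-m\bar\lambda^3=3\bar\lambda\sum_i\mu_i^2+\sum_i\mu_i^3,
\]
so the content is to bound this right-hand side from below by $\mu_1^3$. The key observation is that the constraint $\mu_i\ge-\bar\lambda$ yields
\[
\mu_i^3\ge-\bar\lambda\,\mu_i^2\qquad\text{for every }i,
\]
trivially when $\mu_i\ge 0$, and by multiplying $\mu_i\ge-\bar\lambda$ by $\mu_i^2>0$ when $\mu_i<0$. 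Isolating the $i=1$ term and applying this inequality to $i\ge 2$ gives
\[
\sum_i\mu_i^3\ge\mu_1^3-\bar\lambda\sum_{i\ge 2}\mu_i^2\ge\mu_1^3-\bar\lambda\sum_i\mu_i^2,
\]
and substituting back produces the clean inequality
\[
\sum_i\lambda_i^3-m\bar\lambda^3\ge 2\bar\lambda\sum_i\mu_i^2+\mu_1^3\ge\mu_1^3.
\]

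Combining $\mu_1\le\bigl(\sum_i\lambda_i^3-m\bar\lambda^3\bigr)^{1/3}$ with the elementary estimate $\sqrt{a+b}\le\sqrt a+\sqrt b$ (valid for $a,b\ge 0$), I conclude
\[
\sqrt{\lambda_1}=\sqrt{\bar\lambda+\mu_1}\le\sqrt{\bar\lambda}+\sqrt{\mu_1}\le\sqrt{\bar\lambda}+\Bigl(\sum_i\lambda_i^3-m\bar\lambda^3\Bigr)^{1/6},
\]
which is in fact slightly stronger than the claimed inequality (the factors of $2$ are slack). There is no deep obstacle here; the only delicate point is that some of the $\mu_i$ can be negative, which could a priori make $\sum_i\mu_i^3$ very negative and destroy the lower bound, and this is controlled precisely by positive semidefiniteness of $AA^*$ in the form $\mu_i\ge-\bar\lambda$.
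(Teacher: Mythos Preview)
Your proof is correct and in fact yields the stronger bound
\[
s_1(A)\le \Bigl(\tr((AA^*)^3)-\frac{\tr(AA^*)^3}{m^2}\Bigr)^{1/6}+\Bigl(\frac{\tr(AA^*)}{m}\Bigr)^{1/2},
\]
with no factors of $2$. The paper takes a somewhat different elementary route: it isolates the largest eigenvalue and applies the power-mean inequality (H\"older) to the remaining $m-1$ terms, obtaining $\sum_{i\ge 2}\lambda_i^3\ge (\sum_{i\ge 2}\lambda_i)^3/(m-1)^2$, then expands $(\sum_i\lambda_i)^3$ to reintroduce the full sum and closes with a contradiction argument. Your approach instead centers at the mean $\bar\lambda$ and exploits positive semidefiniteness in the form $\mu_i\ge -\bar\lambda$ to control the potentially negative cubic terms $\sum_{i\ge 2}\mu_i^3$ directly; this avoids the loss in the final step of the paper's argument and produces the sharper constant. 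Both arguments are of comparable length and difficulty, but yours is arguably cleaner and makes more transparent exactly where the nonnegativity of the spectrum is used.
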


\begin{proof}
Recall that $\tr((AA^*)^j)=\sum_{i=1}^m\lambda_i^j$ where $\lambda_1,\dots,\lambda_m$ are the eigenvalues of the $m\times m$ matrix $AA^*$, which are real and non-negative, and that $s_1(A)=\max_i\lambda_i^{1/2}$. We see that it is sufficient to show for any non-negative reals $x_1,\dots x_k$

\begin{equation}
x_1\le 2\Bigl(\sum_{i=1}^k x_i^6-\frac{(\sum_{i=1}^k x_i^2)^3}{k^2}\Bigr)^{1/6}+2\Bigl(\frac{\sum_{i=1}^k x_i^2}{k}\Bigr)^{1/2}.
\label{eq:RealIneq}
\end{equation}

By H\"older's inequality, we have that $\sum_{i=2}^k x_i^6\ge (\sum_{i=2}^k x_i^2)^3/(k-1)^2\ge (\sum_{i=2}^k x_i^2)^3/k^2$. Thus

\begin{align*}
x_1^6=\sum_{i=1}^k x_i^6-\sum_{i=2}^k x_i^6&\le \sum_{i=1}^k x_i^6-\frac{(\sum_{i=2}^k x_i^2)^3}{k^2}\\
&\le \Bigl( \sum_{i=1}^k x_i^6-\frac{(\sum_{i=1}^k x_i^2)^3}{k^2}\Bigr)+3x_1^2\frac{(\sum_{i=1}^k x_i^2)^2}{k^2}\\
&\le \max\Bigl( 4\Bigl(\sum_{i=1}^k x_i^6-\frac{(\sum_{i=1}^k x_i^2)^3}{k^2}\Bigr),\,4x_1^2\frac{(\sum_{i=1}^k x_i^2)^2}{k^2}\Bigr).
\end{align*}

This in turn implies

\[
x_1^6\le \max\Bigl( 4\Bigl(\sum_{i=1}^k x_i^6-\frac{(\sum_{i=1}^k x_i^2)^3}{k^2}\Bigr),\,8\frac{(\sum_{i=1}^k x_i^2)^3}{k^3}\Bigr),
\]

which gives \eqref{eq:RealIneq}.
\end{proof}

Thus we wish to estimate $\tr(M_W M_W^*)$ and $\tr((M_W M_W^*)^3)$. In both cases we expand the trace and use Poisson summation as a first step. In anticipation of this, we introduce the function

\begin{equation}
h_t(u):=w(u)^2u^{it},
\label{eq:htDef}
\end{equation}

which appears in the sums defining the coefficients of $M_WM_W^*$. We first record a basic tail estimate for the Fourier transform $\widehat{h}_t$.

%
%

\begin{lmm}[Non-stationary phase]\label{lmm:Fourier}
Let $h_t(u)=w(u)^2u^{it}$. Then we have
\begin{enumerate}
\item For any integer $j\ge 0$ we have
\[
\widehat{h}_t(\xi)\ll_j (1+|t|)^j/|\xi|^j.
\]
\item For any integer $j\ge 0$ we have
\[
\widehat{h}_t(\xi)\ll_j (1+|\xi|)^j/|t|^j.
\]
\end{enumerate}
\end{lmm}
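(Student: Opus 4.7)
The plan is to prove both bounds by repeated integration by parts in the defining integral
\[
\hat{h_t}(\xi) = \int_1^2 w(u)^2 u^{it} e^{-2\pi i u\xi}\, du,
\]
exploiting the two different ways of generating cancellation from the oscillatory phase $\Phi(u) = t\log u - 2\pi u\xi$. Throughout, all boundary terms vanish because $w$ is supported inside the open interval $(1,2)$.

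For part (1) I would write $e^{-2\pi i u\xi} = (-2\pi i\xi)^{-1}\frac{d}{du}e^{-2\pi i u\xi}$ and integrate by parts $j$ times, moving all derivatives onto the smooth factor $w(u)^2 u^{it}$, to obtain
\[
\hat{h_t}(\xi) = \frac{1}{(2\pi i\xi)^j}\int_1^2 \frac{d^j}{du^j}\bigl[w(u)^2 u^{it}\bigr]\, e^{-2\pi i u\xi}\, du.
\]
Expanding by Leibniz and using $\frac{d^k}{du^k}u^{it} = it(it-1)\cdots(it-k+1) u^{it-k}$, whose modulus is $\lesssim_k (1+|t|)^k$ on $[1,2]$, together with the uniform bounds on derivatives of $w$, gives $\bigl|\frac{d^j}{du^j}[w^2 u^{it}]\bigr| \lesssim_j (1+|t|)^j$, from which (1) follows.

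For part (2) it suffices to treat the case $|t| \ge 1$, since otherwise the right-hand side is at least a constant and the trivial bound $|\hat{h_t}(\xi)|\lesssim 1$ suffices. I would then exploit the identity $u^{it} = (it)^{-1} u\, \frac{d}{du}u^{it}$. A single integration by parts converts a factor of $u^{it}$ into $(it)^{-1}$ times the action of the first-order operator $L f := -\frac{d}{du}(uf)$ on the remaining factor. Iterating $j$ times yields
\[
\hat{h_t}(\xi) = \frac{1}{(it)^j}\int_1^2 L^j\bigl[w(u)^2 e^{-2\pi i u\xi}\bigr]\, u^{it}\, du.
\]
Since $L$ has smooth bounded coefficients on $[1,2]$ and each derivative that lands on $e^{-2\pi i u\xi}$ contributes a factor of $O(|\xi|)$, Leibniz expansion gives $|L^j[w^2 e^{-2\pi i u\xi}]| \lesssim_j (1+|\xi|)^j$, and (2) follows after taking absolute values inside.

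Neither step poses a genuine obstacle: the argument is entirely routine non-stationary phase, and the only real choice is which oscillatory factor to undo with each integration by parts. Undoing $e^{-2\pi i u\xi}$ gains a factor of $|\xi|^{-1}$ while losing at most $(1+|t|)$ per derivative of $u^{it}$, yielding (1); symmetrically, undoing $u^{it}$ gains $|t|^{-1}$ while losing at most $(1+|\xi|)$ per derivative of $e^{-2\pi i u\xi}$, yielding (2).
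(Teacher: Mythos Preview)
Your proposal is correct and follows essentially the same approach as the paper: integration by parts against $e^{-2\pi i u\xi}$ for part (1), and against $u^{it}$ for part (2). The only cosmetic difference is that in part (2) the paper uses the antiderivative relation (producing a denominator $(it+1)\cdots(it+j)$) whereas you use the derivative relation $u^{it}=(it)^{-1}u\,\tfrac{d}{du}u^{it}$ (producing $(it)^j$), but the underlying idea is identical.
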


\begin{proof}
Since $\|w^{(j)}\|_\infty\ll_j 1$ for all $j\ge 0$, we have that $\|h_t^{(j)}\|_\infty\ll_j 1+|t|^j$ for all $j\ge 0$. Thus, by integration by parts (and using that $w$ is compactly supported), we have that

\[
\widehat{h}_t(\xi)=\int e(-\xi u)h_t(u)du=\frac{1}{(2\pi i\xi)^j}\int e(-\xi u)h_t^{(j)}(u)du\ll_j \frac{1+|t|^j}{|\xi|^j}.
\]

Similarly, if $g_\xi(u)=e(-\xi u)w(u)^2$ then $\|g_\xi^{(j)}\|_\infty\ll_j 1+|\xi|^j$, so integration by parts gives

\[
\widehat{h}_t(\xi)=\int g_\xi(u)u^{it}du=\frac{(-1)^j}{(it+1)\cdots (it+j)}\int g_\xi^{(j)}(u)u^{it+j}du\ll_j \frac{1+|\xi|^j}{|t|^j}.\qedhere
\]
\end{proof}

%
%

\begin{lmm}[Hilbert-Schmidt Norm estimate]\label{sumsi2}
 If $W \subset \RR$ is a finite set with $|W| \le N^{O(1)}$, then
\[
\tr(M_W M_W^*) = N |W| \,\| w \|_{L^2}^2 +  O(N^{-100}).
\]
\end{lmm}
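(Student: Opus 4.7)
The computation is essentially a direct calculation using Poisson summation, since the diagonal of $M_W M_W^\ast$ is completely independent of $t\in W$.

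First I would simply expand the trace. From the definition \eqref{eq:MWDef} we have
\[
(M_W M_W^\ast)_{t,t} = \sum_n w(n/N)^2 n^{i(t-t)} = \sum_n w(n/N)^2,
\]
which does not depend on $t$. Therefore
\[
\tr(M_W M_W^\ast) = |W| \sum_n w(n/N)^2.
\]

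Next I would evaluate the inner sum $S := \sum_n w(n/N)^2$ by Poisson summation. Writing $g(x) = w(x/N)^2$, the Fourier transform is $\hat{g}(\xi) = N\, \widehat{w^2}(N\xi)$, so
\[
S = \sum_{k\in \ZZ} \hat g(k) = N\widehat{w^2}(0) + N\sum_{k\ne 0}\widehat{w^2}(Nk) = N\|w\|_{L^2}^2 + N\sum_{k\ne 0}\widehat{w^2}(Nk).
\]
Since $w$ is smooth and compactly supported on $[1,2]$, so is $w^2$, and in particular $\widehat{w^2}$ is Schwartz. Repeated integration by parts (exactly as in Lemma~\ref{lmm:Fourier}(1) applied with $t=0$) gives $|\widehat{w^2}(\xi)| \lesssim_j |\xi|^{-j}$ for every $j\ge 0$. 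Taking $j$ large enough then shows $N\sum_{k\ne 0}\widehat{w^2}(Nk) = O(N^{-200})$.

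Combining the two displays, $\tr(M_W M_W^\ast) = N|W|\|w\|_{L^2}^2 + O(|W|N^{-200})$, and the hypothesis $|W|\le N^{O(1)}$ absorbs the $|W|$ factor to yield the stated $O(N^{-100})$ error. There is really no obstacle here; the whole point of the lemma is that the diagonal sum is independent of the points of $W$, so only the single one-variable Schwartz estimate on $\widehat{w^2}$ is required.
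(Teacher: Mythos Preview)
Your proof is correct and follows essentially the same approach as the paper: expand the trace to get $|W|\sum_n h_0(n/N)$ (with $h_0=w^2$), apply Poisson summation, and use the rapid decay of $\hat h_0$ from Lemma~\ref{lmm:Fourier} to bound the nonzero frequencies. The only cosmetic difference is notation ($\widehat{w^2}$ versus $\hat h_0$).
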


\begin{proof}
Expanding the trace, we see that

\[
\tr(M_W M_W^*) =\sum_{t \in W}\sum_{ n } w(n/N)^2 = |W| \sum_{n \in \ZZ} h_0(n/N). 
 \]
 
Because $h_0$ is a smooth compactly supported function, the sum $\sum_n h_0(n/N)$ is very close to the integral $N \int_\RR h_0(u) d \xi = N \| w \|_{L^2}^2$.   We can get a precise estimate using Poisson summation, which gives (separating the term $m=0$)

\[
 \sum_n h_0(n/N) = N \sum_m \widehat{h_0} (N m) = N\widehat{h_0}(0)+O\Bigl(N\sum_{m\ne 0}|\widehat{h_0}(N m)|\Bigr).
  \]
  
The first term on the right hand side is $N\|w\|_{L^2}^2$ and the second term is $O(N^{-100})$ by Lemma \ref{lmm:Fourier}.
\end{proof}

%
%

\begin{lmm}[Expansion of the cubic trace]\label{lmm:TraceExpansion}
Let $W$ be $T^\epsilon$-separated. Then we have

\[
\tr((M_W M_W^*)^3) =N^3|W|\|w\|_{L^2}^6+  \sum_{\substack{m\in \mathbb{Z}^3\setminus\{ 0\} }}I_m+ O_\epsilon(T^{-100}),
\]

where

\[
I_m := N^3 \sum_{t_1,t_2,t_3\in W}\widehat{h}_{t_1-t_2}(m_1N)\widehat{h}_{t_2-t_3}(m_2N)\widehat{h}_{t_3-t_1}(m_3N). 
\]
\end{lmm}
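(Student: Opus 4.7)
The approach is to expand the cubic trace as an iterated sum over $W^{3}$, apply Poisson summation to each of the three inner $n$-summations, and then split off the diagonal contribution that gives the main term.

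First I would write
\[
\tr((M_W M_W^*)^3) = \sum_{t_1,t_2,t_3 \in W}\prod_{i=1}^{3}(M_W M_W^*)_{t_i,t_{i+1}}
\]
(with cyclic indexing $t_4 = t_1$) and compute each entry as $(M_W M_W^*)_{t_i,t_{i+1}} = \sum_{n_i}w(n_i/N)^{2}\,n_i^{i(t_i-t_{i+1})}$. Writing $n_i^{i(t_i-t_{i+1})}=N^{i(t_i-t_{i+1})}(n_i/N)^{i(t_i-t_{i+1})}$ and observing that the three $N$-phase prefactors multiply to $N^{i\sum_{i}(t_i-t_{i+1})}=1$ by cyclic telescoping, the product collapses to $\prod_{i=1}^{3}\sum_{n_i}h_{t_i-t_{i+1}}(n_i/N)$.

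Next I would apply Poisson summation to each inner sum. For each fixed $t$ the function $x\mapsto h_t(x/N)$ is smooth and compactly supported, with Fourier transform $\xi \mapsto N\hat h_t(N\xi)$, so $\sum_{n}h_t(n/N) = N\sum_{m}\hat h_t(Nm)$. Multiplying over $i=1,2,3$ and summing over $t_1,t_2,t_3\in W$ yields
\[
\tr((M_W M_W^*)^3) = N^{3}\sum_{t_1,t_2,t_3\in W}\sum_{m\in\mathbb{Z}^{3}}\hat h_{t_1-t_2}(Nm_1)\hat h_{t_2-t_3}(Nm_2)\hat h_{t_3-t_1}(Nm_3).
\]

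The last step is to split this $m$-sum into $m=0$ and $m\neq 0$ pieces: the $m\neq 0$ piece is exactly $\sum_{m\in\mathbb{Z}^{3}\setminus\{0\}}I_m$. For the $m=0$ piece, the fully diagonal triples $t_1=t_2=t_3$ contribute $N^{3}|W|\hat h_0(0)^{3}=N^{3}|W|\|w\|_{L^{2}}^{6}$ (using $\hat h_0(0)=\int w(u)^{2}\,du$), which is the claimed main term. For any other triple in the $m=0$ piece, since $(t_1-t_2)+(t_2-t_3)+(t_3-t_1)=0$ we cannot have exactly one difference nonzero, so at least two of the three differences $t_i-t_{i+1}$ are nonzero, and the $T^\epsilon$-separation of $W$ forces $|t_i-t_{i+1}|\ge T^\epsilon$ on those. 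Applying Lemma~\ref{lmm:Fourier}(2) at $\xi=0$ bounds each such factor by $O_j(T^{-\epsilon j})$; combining this with the trivial counts $|W|^{3}\le T^{3}$ of triples and $N^{3}\le T^{3}$ up front, and taking $j$ large in terms of $1/\epsilon$, drives the residual contribution below $T^{-100}$.

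The only points needing any attention are the phase cancellation at the outset (a cyclic identity) and the justification of Poisson summation (immediate from the smoothness and compact support of $w$); neither is an obstacle.
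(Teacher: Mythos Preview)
Your proof is correct and follows essentially the same route as the paper: expand the trace, rewrite each factor as $h_{t_i-t_{i+1}}(n_i/N)$ (using the cyclic telescoping of the $N$-phases, which the paper leaves implicit), apply Poisson summation in each $n_i$, and isolate the $m=0$ term. Your observation that at least two of the cyclic differences must be nonzero is correct but slightly more than needed---the paper simply notes that if $t_1\ne t_2$ then $\hat h_{t_1-t_2}(0)=O_\epsilon(T^{-200})$ and bounds the remaining factors trivially by $O(1)$; either way the off-diagonal $m=0$ contribution is absorbed into the error.
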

\begin{proof}
First we expand $\tr((M_W M_W^*)^3)$ as the sum $S$, given by

\begin{align*}
S& = \sum_{n_1,n_2,n_3\in \mathbb{Z}}\sum_{t_1,t_2,t_3\in W}w\Bigl(\frac{n_1}{N}\Bigr)^2 w\Bigl(\frac{n_2}{N}\Bigr)^2 w\Bigl(\frac{n_3}{N}\Bigr)^2 n_1^{i(t_1-t_2)}n_2^{i(t_2-t_3)}n_3^{i(t_3-t_1)}\\
&=\sum_{t_1,t_2,t_3\in W} \sum_{n_1,n_2,n_3\in \mathbb{Z}} h_{t_1-t_2}\Bigl(\frac{n_1}{N}\Bigr)h_{t_2-t_3}\Bigl(\frac{n_2}{N}\Bigr)h_{t_3-t_1}\Bigl(\frac{n_3}{N}\Bigr),
\end{align*}

where, as in \eqref{eq:htDef}, we have $ h_t(u)= w(u)^2 u^{it}$. We now perform Poisson summation in $n_1, n_2, n_3$, which gives

\[
S=N^3\sum_{m_1,m_2,m_3\in \mathbb{Z}}\sum_{t_1,t_2,t_3\in W}\widehat{h}_{t_1-t_2}(m_1N)\widehat{h}_{t_2-t_3}(m_2N)\widehat{h}_{t_3-t_1}(m_3N)=\sum_{m\in\mathbb{Z}^3}I_m. 
\]

Finally, we separate the term $m_1=m_2=m_3=0$, which contributes

\[
I_0=N^3\sum_{t_1,t_2,t_3\in W}\widehat{h}_{t_1-t_2}(0)\widehat{h}_{t_2-t_3}(0)\widehat{h}_{t_3-t_1}(0).
\]

Since $W$ is $T^\epsilon$-separated, $\widehat{h}_{t_1-t_2}(0)\ll_\epsilon T^{-200}$ if $t_1\ne t_2$ by Lemma \ref{lmm:Fourier}. Thus the terms in the $I_0$ above are negligible unless $t_1=t_2=t_3$, and so

\[
I_0=N^3\sum_{t\in W}\widehat{h}_0(0)^3+O_\epsilon(T^{-100})=N^3|W|\|w\|_{L^2}^6+O_\epsilon(T^{-100}).
\]

Putting this together gives the result.
\end{proof}

Putting together Lemmas \ref{lmm:SpectralBound}-Lemma \ref{lmm:TraceExpansion}, and noting that the $N^3 |W| \|w\|_{L^2}^3$ term cancels with $\tr(M_W M_W^*)^3/|W|^2$, gives the following.

%
%

\begin{prpstn}\label{prpstn:ImBound}
Let $W$ be $T^\epsilon$-separated, and let $|b_n|\le 1$ be such that $|D_N(t)|>N^\sigma$ for all $t\in W$. Then we have

\[
|W|\ll_\epsilon N^{2-2\sigma}+N^{1-2\sigma}\Bigl(\sum_{\substack{m\in \mathbb{Z}^3\setminus\{0\}}}I_m\Bigr)^{1/3},
\]

where $I_m$ is the quantity defined in Lemma \ref{lmm:TraceExpansion}.
\end{prpstn}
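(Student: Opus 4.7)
The plan is to chain together Lemmas \ref{lmm:SpectralBound}, \ref{sumsi2}, \ref{lmm:TraceExpansion}, together with the trace-based bound on $s_1$. By Lemma \ref{lmm:SpectralBound} it suffices to show that
\[
s_1(M_W)^2 \lesssim_\epsilon N + \Bigl(\sum_{m\in \mathbb{Z}^3\setminus\{0\}} I_m\Bigr)^{1/3},
\]
because multiplying both sides by $N^{1-2\sigma}$ then yields the proposition.

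First I apply the trace-based singular value bound to $A = M_W$, which has $|W|$ rows, giving
\[
s_1(M_W) \le 2\Bigl(\tr((M_WM_W^*)^3) - \tfrac{1}{|W|^2}\tr(M_WM_W^*)^3\Bigr)^{1/6} + 2\Bigl(\tfrac{1}{|W|}\tr(M_WM_W^*)\Bigr)^{1/2}.
\]
Lemma \ref{sumsi2} shows $\tr(M_WM_W^*)/|W| = N\|w\|_{L^2}^2 + O(|W|^{-1}N^{-100})$, so the second term contributes $\lesssim N^{1/2}$, which is acceptable.

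For the first term, Lemma \ref{lmm:TraceExpansion} gives
\[
\tr((M_WM_W^*)^3) = N^3 |W| \|w\|_{L^2}^6 + \sum_{m\in \mathbb{Z}^3\setminus\{0\}} I_m + O_\epsilon(T^{-100}).
\]
On the other hand, cubing the Hilbert--Schmidt estimate from Lemma \ref{sumsi2} and dividing by $|W|^2$ yields
\[
\tfrac{1}{|W|^2}\tr(M_WM_W^*)^3 = N^3 |W| \|w\|_{L^2}^6 + O(|W|N^{-100}),
\]
so the two $N^3|W|\|w\|_{L^2}^6$ main terms cancel exactly, leaving $\sum_{m\ne 0} I_m$ plus negligible error. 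Here I implicitly use that $|W|\le T^{O(1)} \le N^{O(1)}$, which is legitimate in the regime of interest (the relevant $W$ always sits in an interval where $T$ is at most polynomial in $N$, otherwise Proposition \ref{prpstn:KeyProp} is not invoked on $W$ directly).

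Raising the resulting inequality to the sixth power gives $s_1(M_W)^2 \lesssim_\epsilon N + (\sum_{m\ne 0} I_m)^{1/3}$, and combining with Lemma \ref{lmm:SpectralBound} produces the stated bound on $|W|$. This step is essentially a bookkeeping exercise and the only point requiring a moment's care is checking that the leading $N^3|W|\|w\|_{L^2}^6$ contributions from $\tr((M_WM_W^*)^3)$ and from $\tr(M_WM_W^*)^3/|W|^2$ cancel to sufficient accuracy; since the corrections in both Lemma \ref{sumsi2} and Lemma \ref{lmm:TraceExpansion} are $T^{-100}$-type, this is immediate and no genuine obstacle arises.
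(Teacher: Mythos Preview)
Your proof is correct and follows the same approach as the paper: combine Lemma \ref{lmm:SpectralBound}, the trace-based singular value inequality, Lemma \ref{sumsi2}, and Lemma \ref{lmm:TraceExpansion}, observing that the main terms $N^3|W|\|w\|_{L^2}^6$ cancel. The only minor slip is the phrase ``raising the resulting inequality to the sixth power''---you mean squaring the bound $s_1(M_W)\le 2A^{1/6}+2B^{1/2}$ to get $s_1(M_W)^2\lesssim A^{1/3}+B$---but the conclusion is correct.
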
 

The first term above corresponds to the best possible estimate is $|W|\ll N^{2-2\sigma}$ of Conjecture \ref{cnjctr:Montgomery}, and so our task is reduced to getting a good bound for the sum of $I_m$.

%
%
%
%

\section{The pieces of the sum \texorpdfstring{$S$}{S}}

Recall from Proposition \ref{prpstn:ImBound}, we have

\[
 |W|^{3} \ll N^{6-6\sigma} + N^{3-6\sigma}\sum_{\substack{m \in \ZZ^3\setminus\{0\}}} I_m,
 \]
 
 where
 
 \[
 I_m = N^3 \sum_{t_1,t_2,t_3\in W}\widehat{h}_{t_1-t_2}(m_1N)\widehat{h}_{t_2-t_3}(m_2N)\widehat{h}_{t_3-t_1}(m_3N). 
 \]
 
To get started, we note a few cases when $|\widehat{h}_{t_1 - t_2}(mN)|$ is easy to understand via Lemma \ref{lmm:Fourier}.   Since $W$ is $T^\epsilon$-separated,  we see that if $t_1 \ne t_2$, by Lemma \ref{lmm:Fourier} we have

\begin{equation}\label{tdiffm0} 
| \widehat{h}_{t_1-t_2}(0) | \ll_\epsilon T^{-100}.  
\end{equation}

On the other hand,  if $t_1 = t_2$,  then we have

\begin{equation}
\label{tsamem0} 
\widehat{h}_{t_1 - t_2}(0) = \widehat{h}_0(0) = \int w^2(u) du \asymp 1.
\end{equation}

If $t_1 = t_2$ but $m \not= 0$,  then by Lemma \ref{lmm:Fourier} we have

\begin{equation}
\label{tsamemn0} 
| \widehat{h}_{t_1-t_2}(mN) |  \ll m^{-100} N^{-100}. 
\end{equation}

Finally, if $m>T^{1+\epsilon}/N$, then since $W$ is contained in an interval of length $T$, we have by Lemma \ref{lmm:Fourier} and taking $j=\lceil 200/\epsilon\rceil+100$

\begin{equation}
| \widehat{h}_{t_1-t_2}(mN) |   \ll_\epsilon \frac{T^{100}}{(mN)^{100}} \Bigl(\frac{T}{T^{1+\epsilon}}\Bigr)^{200/\epsilon}\ll T^{-100}m^{-100} . 
\label{eq:FourierTail}
\end{equation}

With this in mind, we divide the sum into pieces

\begin{equation}
\sum_{\substack{m \in \ZZ^3\setminus\{0\}}} I_m = S_1 + S_2 + S_3, 
\label{eq:SiExpansion}
\end{equation}

\noindent where $S_1$ contains the terms where exactly one $m_i$ is non-zero, $S_2$ contains the terms where exactly two $m_i$ are non-zero, and $S_3$ contains the terms where all three $m_i$ are non-zero.

We will see that $S_1$ is negligible.  In the next section, we will bound $S_2$ using Heath-Brown's theorem,  Theorem \ref{thrm:HeathBrown}.   The main part of the paper is concerned with studying $S_3$, which contains most of the terms and is most difficult.  

%
%

\begin{prpstn}[$S_1$ bound]\label{prpstn:S1}
We have
\[
S_1=O_\epsilon(T^{-10}).
\]
\end{prpstn}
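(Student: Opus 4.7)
The proposition says $S_1$ is completely negligible, and indeed the only nontrivial content is bookkeeping. By the symmetry of the expression for $I_m$ under cyclic permutations of $(m_1,m_2,m_3)$ and the corresponding relabeling of $t_1, t_2, t_3$, it suffices to bound three times the contribution from those $m = (m_1,0,0)$ with $m_1 \ne 0$. For such $m$ we have
\[
I_m = N^3 \sum_{t_1,t_2,t_3 \in W} \hat h_{t_1-t_2}(m_1 N)\, \hat h_{t_2-t_3}(0)\, \hat h_{t_3-t_1}(0).
\]
The plan is to argue that the factors $\hat h_{t_i - t_j}(0)$ force the triple to be almost perfectly diagonal, after which the surviving sum over $m_1$ decays superpolynomially in $N$.

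First, I would split the sum according to whether or not $t_2 = t_3$ and $t_1 = t_3$. Since $W$ is $T^\epsilon$-separated, whenever $t_i \ne t_j$ we have $|t_i - t_j| \ge T^\epsilon$, and applying Lemma~\ref{lmm:Fourier}(2) at $\xi = 0$ with $j = \lceil 300/\epsilon\rceil$ gives
\[
|\hat h_{t_i - t_j}(0)| \lesssim_\epsilon T^{-300}.
\]
Thus any off-diagonal configuration (some $t_i \ne t_j$ among the two relevant pairs) carries at least one factor of size $T^{-300}$, while the remaining two factors are bounded trivially by $\|w^2\|_{L^1} \lesssim 1$. Summing over at most $|W|^3 \le T^{O(1)}$ triples, and using \eqref{eq:FourierTail} to truncate $|m_1| \le T^{1+\epsilon}/N$ up to a negligible tail, the off-diagonal contribution to $N^{-3} I_m$ summed over $m_1$ is at most $T^{O(1)} \cdot T^{-300}$, which easily absorbs into $O_\epsilon(T^{-10})$.

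Next, I handle the diagonal part $t_1 = t_2 = t_3 = t$. Here $\hat h_0(0) \asymp 1$, so the surviving contribution is
\[
N^3 \sum_{m_1 \ne 0} |W| \cdot \hat h_0(m_1 N) \cdot \hat h_0(0)^2 \lesssim N^3 |W| \sum_{m_1 \ne 0} |\hat h_0(m_1 N)|.
\]
Applying Lemma~\ref{lmm:Fourier}(1) with $t = 0$ and $j$ large (say $j = 200$) yields $|\hat h_0(m_1 N)| \lesssim_j (|m_1| N)^{-j}$, so the tail sum is $\lesssim N^{-j+1}$. Since $|W| \le T^{O(1)}$ and $N \le T$, the diagonal contribution is $\lesssim T^{O(1)} N^{-197} \ll T^{-10}$.

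Combining the two estimates (multiplied by $3$ for the symmetric cases $m_2\ne 0$ alone and $m_3\ne 0$ alone) gives $S_1 = O_\epsilon(T^{-10})$. There is no real obstacle here; the only thing to be careful about is that Lemma~\ref{lmm:Fourier}(1) is only useful for $\xi \ne 0$ and Lemma~\ref{lmm:Fourier}(2) is only useful for $t \ne 0$, so one must use the right decay statement in each of the two regimes (the separation of $W$ for the off-diagonal terms, and the nonvanishing of $m_1$ for the diagonal term).
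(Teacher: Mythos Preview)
Your proof is correct and follows essentially the same approach as the paper: reduce by symmetry to a single nonzero $m_i$, truncate the $m$-sum via \eqref{eq:FourierTail}, kill off-diagonal $t$-triples using the $T^\epsilon$-separation and Lemma~\ref{lmm:Fourier}(2), and bound the diagonal via Lemma~\ref{lmm:Fourier}(1). The only cosmetic differences are which index you label as nonzero and whether you truncate in $m$ before or after splitting in $t$.
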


\begin{proof}
  By symmetry, we see that
  
\[
S_1\le 3N^3\sum_{t_1,t_2,t_3\in W}\sum_{\substack{m_3 \not= 0}} |\widehat{h}_{t_1 - t_2}(0) \widehat{h}_{t_2 - t_3}(0) \widehat{h}_{t_3 - t_1}(m_3 N)|.
 \]
 
 By \eqref{eq:FourierTail} (using the trivial bound $|\widehat{h}_t(\xi)|\ll 1$ for the other factors), terms with $|m_3|>T^{1+\epsilon}/N$ contribute
 
 \[
 \ll_\epsilon N^3|W|^3\sum_{m>T^{1+\epsilon}/N}T^{-100}m^{-100}\ll  T^{-10}.
 \]
 
 Thus we may restrict attention to terms with $|m_3|<T^{1+\epsilon}/N$. Next we consider terms with $t_1\ne t_2$. Using \eqref{tdiffm0} to bound $|\widehat{h}_{t_1 - t_2}(0)|$ (and the trivial bound $\widehat{h}_t\ll 1$ for the remaining  factors), we see the terms with $t_1\ne t_2$ and $|m_3|<T^{1+\epsilon}/N$ contribute
 
 \[
 \ll_\epsilon N^3|W|^3\frac{T^{1+\epsilon}}{N}T^{-100}\ll  T^{-10}.
 \]
 
 Similarly, the terms with $t_2\ne t_3$ contribute $O_\epsilon(T^{-10})$. The remaining terms have $t_1 = t_2 = t_3$. For these terms we apply \eqref{tsamemn0} to bound $|\widehat{h}_{t_3 - t_1}(m_3 N)|$, which shows that the terms with $t_1=t_2=t_3$ also contribute $O_\epsilon(T^{-10})$. This gives the result.
\end{proof}

%
%
%
%

\section{The contribution of \texorpdfstring{$S_2$}{S2}}

The aim of this section is to establish the following bound for the sum $S_2$, which ultimately relies on Heath-Brown's estimate, Theorem \ref{thrm:HeathBrown}.

%
%

\begin{prpstn}[$S_2$ bound]\label{prpstn:S2}
For any choice of $k\in \mathbb{N}$
\[
S_{2}\lessapprox_{\epsilon,k} N^2 |W|^2+TN|W|^{2-1/k}+N^2|W|^2 \Bigl(\frac{T^{1/2}}{|W|^{3/4}}\Bigr)^{1/k}.
\]
\end{prpstn}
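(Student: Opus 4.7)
The plan is to reduce $S_2$ to a mean-square sum in the Dirichlet polynomial $R(\tau) := \sum_n w(n/N)^2 n^{i\tau}$ and then bound the result via H\"older's inequality combined with Heath-Brown's Theorem \ref{thrm:HeathBrown}.

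By the symmetry among the three indices $m_i$ in $I_m$, it suffices to treat three times the contribution $S_2^{(1)}$ in which $m_1 = 0$ and $m_2, m_3 \ne 0$. Here, $\hat h_{t_1 - t_2}(0)$ is $O_\epsilon(T^{-100})$ whenever $t_1 \ne t_2$ by \eqref{tdiffm0}, while \eqref{eq:FourierTail} effectively restricts $|m_2|, |m_3| \le T^{1+\epsilon}/N$; together these show the contribution with $t_1 \ne t_2$ is negligible. Setting $t = t_1 = t_2$ and $t' = t_3$, and using $\hat h_0(0) = \|w\|_2^2$, we obtain
\[
S_2 \approx 3 N^3 \|w\|_2^2 \sum_{t, t' \in W} \Psi(t - t'), \qquad \Psi(\tau) := \Bigl(\sum_{m \ne 0} \hat h_\tau(mN)\Bigr) \Bigl(\sum_{m \ne 0} \hat h_{-\tau}(mN)\Bigr).
\]
Reverse Poisson summation gives $\sum_m \hat h_\tau(mN) = N^{-1-i\tau} R(\tau)$, so $\sum_{m \ne 0} \hat h_\tau(mN) = N^{-1-i\tau} R(\tau) - \hat h_\tau(0)$. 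For $|\tau| \ge T^\epsilon$, the boundary $\hat h_\tau(0)$ is negligible by Lemma \ref{lmm:Fourier}(2); at $\tau = 0$, the leading contributions of $N^{-1} R(0)$ and $\hat h_0(0)$ cancel, so $\Psi(0) = O(N^{-2})$ is negligible. Hence $S_2 \lessapprox N \sum_{t \ne t' \in W} |R(t-t')|^2$.

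Given $k \in \mathbb{N}$, H\"older's inequality yields
\[
\sum_{t \ne t'} |R(t-t')|^2 \le |W|^{2(1-1/k)} \Bigl(\sum_{t, t' \in W} |R(t-t')|^{2k}\Bigr)^{1/k}.
\]
Viewing $|R|^{2k} = |R^k|^2$, where $R^k(\tau) = \sum_n r_k(n) n^{i\tau}$ is a Dirichlet polynomial of length $N^k$ with coefficients $r_k(n) = \sum_{n_1 \cdots n_k = n} \prod_j w(n_j/N)^2$ bounded by the $k$-fold divisor function (hence $\lessapprox 1$), Heath-Brown's Theorem \ref{thrm:HeathBrown} applied to $R^k$ gives
\[
\sum_{t, t' \in W} |R^k(t-t')|^2 \lessapprox |W|^2 N^k + |W| N^{2k} + |W|^{5/4} T^{1/2} N^k.
\]

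Substituting back into H\"older and multiplying by $N$, the first and third terms here produce (after taking $k$-th roots) respectively $N^2|W|^2$ and $N^2|W|^2(T^{1/2}/|W|^{3/4})^{1/k}$, matching the first and third terms of the proposition. The main obstacle is the middle term: a na\"ive use of the diagonal $|W| N^{2k}$ term in Heath-Brown would produce $N^3|W|^{2-1/k}$, which is weaker than the desired $TN|W|^{2-1/k}$ when $T < N^2$. The improvement exploits the stationary-phase structure of $\hat h_\tau(mN) = \int w(u)^2 u^{i\tau} e(-mNu)\, du$: its phase $\tau \log u - 2\pi mNu$ has a critical point in $\mathrm{supp}(w)$ only when $m \asymp \tau/N$, in which case the integral has size $|\tau|^{-1/2}$. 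Summing over the $\lesssim |\tau|/N$ admissible $m$'s gives $|\sum_{m \ne 0} \hat h_\tau(mN)| \lesssim |\tau|^{1/2}/N$, whence $|\Psi(\tau)| \lesssim |\tau|/N^2 \le T/N^2$ uniformly. Interpolating this pointwise bound on $\Psi$ with the Heath-Brown control on the $L^{2k}$-average of $R$ via a H\"older-type inequality (using $\Psi^k \le (T/N^2)^{k-1}\Psi$) converts the na\"ive $N^2$ factor into the sharp factor of $T$, yielding the middle term $TN|W|^{2-1/k}$ and completing the bound.
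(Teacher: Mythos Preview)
Your reduction $S_2 \lessapprox N \sum_{t\ne t'\in W} |R(t-t')|^2$ is correct and matches the paper exactly. Applying H\"older and then Heath-Brown to $R^k$ (a polynomial of length $N^k$) also correctly produces the first and third terms of the proposition. The problem is your handling of the middle term.

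Your proposed fix does not work. The inequality $\Psi^k \le (T/N^2)^{k-1}\Psi$ is equivalent to $|R(\tau)|^{2k}\le T^{k-1}|R(\tau)|^2$, so it gives
\[
\sum_{t,t'}|R(t-t')|^{2k}\le T^{k-1}\sum_{t,t'}|R(t-t')|^{2}.
\]
Bounding the right-hand side by Heath-Brown (the $|W|N^2$ term) yields $T^{k-1}|W|N^2$ for the middle contribution; after the $k$-th root and multiplication by $N|W|^{2-2/k}$ this becomes $N^{1+2/k}T^{(k-1)/k}|W|^{2-1/k}$. For this to be at most $TN|W|^{2-1/k}$ you would need $N^{2/k}\le T^{1/k}$, i.e.\ $N^2\le T$, which fails in the regime of interest ($T=N^{6/5}$ in Proposition~\ref{prpstn:KeyProp}). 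So the interpolation only converts $N^3$ into $N^{1+2/k}T^{(k-1)/k}$, not into $TN$; the stationary-phase size bound $|R(\tau)|^2\lesssim T$ is too crude here.

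What is missing is not a size bound for $\sum_{m\ne 0}\hat h_\tau(mN)$ but a \emph{structural} identity: the approximate functional equation (Lemma~\ref{lmmappfunceq}) expresses this sum, up to a factor $|\tau|^{-1/2}$, as a Dirichlet polynomial in $m$ of length $\asymp |\tau|/N\le T/N$. After a dyadic split on $|t-t'|\sim MN$ this gives
\[
S_2\lessapprox \frac{N^2}{M}\sum_{t,t'\in W}\Bigl|\sum_{m\sim M}a_m m^{i(t-t')}\Bigr|^2
\]
for some $M\lessapprox T/N$. Now H\"older and Heath-Brown are applied to the $k$-th power of this \emph{dual} polynomial of length $M^k\lessapprox (T/N)^k$, and the middle term $|W|M^{2k}$ yields, after the algebra, $N^2 M|W|^{2-1/k}\lessapprox TN|W|^{2-1/k}$ as required. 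The whole point of the middle term is this passage to the dual polynomial; a pointwise bound cannot substitute for it.
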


The proof of this proposition relies on the following consequence of stationary phase, which is part of the well-known  `reflection principle' for Dirichlet polynomials, or the approximate functional equation (values of a Dirichlet polynomial of length $N$ at $t\in [T,2T]$ are determined by values of a Dirichlet polynomial of length $T/N$).

%
%

\begin{lmm}[Approximate functional equation] \label{lmmappfunceq} For every $t$ with $|t| \sim T_0\ge T^\epsilon$, we have
\[
 \Bigl| \sum_{m \not= 0} \widehat{h}_t(mN) \Bigr| \ll \frac{1}{T_0^{1/2}} \int_{u\lessapprox 1}\Bigl| \sum_{m \lessapprox T_0/N} m^{-i(t+u)} \Bigr|du + O_\epsilon( T^{-100} ). 
 \]
 \end{lmm}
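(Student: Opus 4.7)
My plan is to evaluate each Fourier coefficient $\hat h_t(mN)=\int w(u)^2 e^{i(t\log u - 2\pi mNu)}\,du$ by stationary phase and then convert the resulting weighted sum over $m$ into the desired integral of unweighted Dirichlet sums via Mellin inversion. The phase $\psi(u):=t\log u-2\pi mNu$ has stationary point $u_0=t/(2\pi mN)$ lying in $[1,2]$ precisely when $\operatorname{sign}(m)=\operatorname{sign}(t)$ and $|m|\asymp T_0/N$, and in this range $|\psi''(u_0)|=t/u_0^2\asymp T_0$, so stationary phase produces a contribution of size $\sim T_0^{-1/2}$ for each such $m$.

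First I would dispose of the non-stationary $m$. Lemma \ref{lmm:Fourier} gives $\hat h_t(mN)=O_j(T^{-j})$ whenever $|m|\notin[T^{-\epsilon}T_0/N,\,T^\epsilon T_0/N]$, and one further integration by parts (using $|\psi'|\ge T_0/2$ on the support of $w$) handles the sign-mismatch case; these contributions sum to $O(T^{-100})$. For $m$ in the stationary range, classical stationary phase yields
\[
\hat h_t(mN)=\sqrt{\tfrac{2\pi}{|\psi''(u_0)|}}\,w(u_0)^2\,e^{i\psi(u_0)-i\pi/4}+O_J(T_0^{-J-1/2}),
\]
and the factorisation $e^{i\psi(u_0)}=e^{-it}(t/(2\pi N))^{it}m^{-it}$ pulls the Dirichlet phase $m^{-it}$ out of the main term, leaving an amplitude that reorganises as $\Phi(m/M)$ with $M:=t/(2\pi N)$ and $\Phi(y):=y^{-1}w(1/y)^2$ smooth and compactly supported on $(0,\infty)$. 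Choosing $J$ large enough that $T_0^{-J-1/2}\cdot(T_0/N)\le T^{-100}$, this gives
\[
\sum_{m\ne 0}\hat h_t(mN)=\frac{c(t,N)}{\sqrt{T_0}}\sum_m\Phi(m/M)\,m^{-it}+O(T^{-100})
\]
for some $|c(t,N)|\lesssim 1$.

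Finally I would apply Mellin inversion to separate the amplitude from the Dirichlet phase. Since $\Phi(e^\tau)$ is smooth of compact support in $\tau$, we have $\Phi(y)=\int\tilde\Phi(u)\,y^{-iu}\,du$ for a Schwartz function $\tilde\Phi$. Inserting a smooth cutoff $\chi(m/M)$ equal to $1$ on the support of $\Phi$ (to guarantee absolute convergence) and interchanging sum and integral produces
\[
\sum_m\Phi(m/M)\,m^{-it}=\int\tilde\Phi(u)\,M^{iu}\sum_m\chi(m/M)\,m^{-i(t+u)}\,du,
\]
after which Schwartz decay of $\tilde\Phi$ restricts $|u|$ to $\lessapprox 1$ modulo an $O(T^{-100})$ tail (bounded using the trivial estimate $|\sum_m\chi(m/M)m^{-i(t+u)}|\lesssim T_0/N$). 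Taking absolute values and combining with the $1/\sqrt{T_0}$ prefactor yields the claimed bound. The only real subtlety is keeping the stationary-phase error negligible after summation over $O(T_0/N)$ stationary $m$; because $|\psi''(u_0)|\asymp T_0\ge T^\epsilon$ is genuinely large, this is a harmless matter of choosing $J$ large in terms of $\epsilon$ and should not be an obstacle.
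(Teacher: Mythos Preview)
Your approach is correct and takes a genuinely different route from the paper's. The paper reconstitutes the Dirichlet polynomial $\sum_n w(n/N)^2 n^{it}$ via Poisson summation, passes to $\zeta(s-it)$ by Mellin inversion, applies the functional equation of $\zeta$, and then bounds the resulting Gamma-quotient by Stirling's formula; the factor $T_0^{-1/2}$ and the dual Dirichlet sum of length $\asymp T_0/N$ drop out of that machinery after a contour shift. You instead attack each oscillatory integral $\hat h_t(mN)$ directly by stationary phase, which produces the same $T_0^{-1/2}$ amplitude and the phase $m^{-it}$ by hand, and then use Mellin inversion on the smooth weight $\Phi(m/M)$ to separate variables. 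Your argument is more elementary (it avoids the functional equation and Stirling entirely, and is essentially the van der Corput $B$-process carried out term by term), while the paper's route is quicker once those tools are taken off the shelf; the two are of course cousins, since the functional equation is itself a packaged stationary-phase statement.

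One small inaccuracy: writing the single leading stationary-phase term with remainder $O_J(T_0^{-J-1/2})$ is not literally correct --- the one-term expansion has error $O(T_0^{-3/2})$, and to push the error down to $O(T_0^{-J-1/2})$ you must keep $J$ terms. This is harmless, as you essentially note at the end: each higher-order coefficient $a_k$ in the expansion depends only on $u_0=M/m$ (through derivatives of $w^2$ and of the normalised phase $\log u - u/u_0$ at $u_0$), so every term again factors as $\Phi_k(m/M)\,m^{-it}$ with $\Phi_k$ smooth and compactly supported, and your Mellin-inversion step handles all $J$ terms uniformly. It is worth making this explicit rather than suppressing the higher terms. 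A second cosmetic point is that you land on a smooth cutoff $\chi(m/M)$ rather than the sharp cutoff in the lemma's statement; a one-line partial summation converts between them, and in any case the only use of the lemma downstream immediately passes to dyadic blocks with bounded coefficients, which absorbs your $\chi$.
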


Although somewhat standard, we will give a detailed proof of Lemma \ref{lmmappfunceq} below.   Let us first use it to bound $S_2$.  

\begin{proof}[Proof of Proposition \ref{prpstn:S2} assuming Lemma \ref{lmmappfunceq}]
Recall that $S_2$ is the sum of those $I_m$ where exactly two $m_i$ are non-zero.   By symmetry,  we have

\[
S_2 = 3 N^3 \sum_{\substack{m_1, m_2 \not= 0}} \sum_{t_1,t_2, t_3\in W}\widehat{h}_{t_1-t_2}(m_1N)\widehat{h}_{t_2-t_3}(m_2N) \widehat{h}_{t_3 -t_1} (0).
\]

If $t_1 \not= t_3$,  then \eqref{tdiffm0} shows that the last factor $\widehat{h}_{t_3-t_1}(0)$ is $O_\epsilon(T^{-100})$, and so using the bound $\widehat{h}_t(u)\ll (1+|t|^2)/|u|^2$ from Lemma \ref{lmm:Fourier} for the remaining factors, these terms contribute $O_\epsilon(T^{-10})$ in total.   Therefore we have

\[
 S_2 =  3N^3\widehat{h}_{0} (0)\sum_{m_1,m_2\ne 0}  \sum_{t_1,t_2\in W}\widehat{h}_{t_1-t_2}(m_1N)\widehat{h}_{t_2-t_1}(m_2N) +O_\epsilon(T^{-10}).
 \]
 
Since $h_t(u) = w(u)^2 u^{it}$,  we have $h_{-t}(u) = \overline{h_t(u)}$,  and so $ \widehat{h}_{-t} (\xi) = \overline{ \widehat{h}_t( - \xi) }$. In particular,

\[
 \widehat{h}_{t_2-t_1} (m_2N) = \overline{ \widehat{h}_{t_1-t_2}( - m_2N) }. 
 \]
 
Therefore, we can simplify the last equation to get

\[
 S_2 =3N^3 \widehat{h}_0(0)\sum_{t_1, t_2 \in W} \Big| \sum_{m\ne 0} \widehat{h}_{t_1-t_2}(m N) \Big|^2+O_\epsilon(T^{-10}).
  \]

\begin{rmk}  Heath-Brown's theorem, Theorem \ref{thrm:HeathBrown}, 
gives a good estimate for the sum $\sum_{t_1,  t_2 \in W} | \sum_{m \in \ZZ} \widehat{h}_{t_1 - t_2}(mN) |^2$.   However we cannot apply it immediately because we need to be careful to leave out the term with $m=0$.   This term is related to $I_0$ which we handled carefully in the previous section.  
\end{rmk}

If $t_1 = t_2$,  then $\sum_{m \not= 0} \widehat{h}_{t_1 - t_2}(mN)$ is negligible by \eqref{tsamemn0} and \eqref{eq:FourierTail}.   So, splitting the sum dyadically according to the size of $t_1-t_2$, we find

\begin{align*}
 S_2& \lessapprox N^3 \sup_{\substack{M=2^j\\ T^\epsilon/N<M<2T/N}}\sum_{\substack{t_1 \not= t_2 \in W\\ |t_1-t_2|\sim MN}} \Big| \sum_{m \not= 0} \widehat{h}_{t_1-t_2}(m N) \Big|^2 +O_\epsilon(T^{-10}).
 \end{align*}

If $|t_1 - t_2| \sim M N$,  then $\sum_{m \not= 0} \widehat{h}_{t_1 - t_2} (mN)$ can be approximated by a Dirichlet polynomial of length $M$.  Indeed, by Lemma \ref{lmmappfunceq}, for such $t_1,t_2$ we have

\[
 \Bigl| \sum_{m \not= 0} \widehat{h}_{t_1-t_2}(mN) \Bigr| \lessapprox \frac{1}{M^{1/2} N^{1/2}}\int_{|u|\lessapprox 1} \Bigl| \sum_{ m \lessapprox M}  m^{-i(t_1-t_2-u)} \Bigr|du + O_\epsilon( T^{-100} ). 
  \]
  
Squaring and summing over $t_1,t_2\in W$ with $|t_1-t_2|\sim NM$ gives

\[
 S_2 \lessapprox   \sup_{\substack{M\le 2T/N\\ |u|\lessapprox 1}}\frac{N^2}{M}\sum_{\substack{t_1 \not= t_2 \in W\\ |t_1-t_2|\sim MN}} \Big| \sum_{1\le m\lessapprox M}  m^{i(t_1-t_2-u)} \Big|^2 + O_\epsilon( T^{-10} ).
 \]

We can now drop the condition $|t_1-t_2|\sim MN$ for an upper bound, and split the summation range $m\lessapprox M$ into dyadic intervals. We note that the dyadic range when $M=1$ gives a contribution which dominates the error term, so for notational convenience we can absorb the error term into the main sum. Thus we find

\begin{equation}
S_2 \lessapprox_\epsilon \frac{N^2}{M}\sum_{t_1,t_2\in W}\Bigl|\sum_{m \sim M } a_m m^{i(t_1-t_2)}\Bigr|^2
\label{eq:S2Intermediate}
\end{equation}

for some choice of $M\lessapprox T/N$ and some coefficients $|a_m|\le 1$.

We apply H\"older's inequality to this sum, and rewrite the $2k^{th}$ power of the Dirichlet polynomial as the $2^{nd}$ power of a longer Dirichlet polynomial. For any choice of positive integer $k$, we find that

\begin{align}
\sum_{t_1,t_2\in W}\Bigl|\sum_{m \sim M} a_m m^{i(t_1-t_2)}\Bigr|^2&\le |W|^{2-2/k}\Bigl(\sum_{t_1,t_2\in W}\Bigl|\sum_{m\asymp M^k}b_m m^{i(t_1-t_2)}\Bigr|^{2}\Bigr)^{1/k}
\label{eq:S2Holder}
\end{align}

for some coefficients $b_m\le  M^{o_k(1)}$ (by the divisor bound).   Theorem \ref{thrm:HeathBrown} bounds sums of this type.   We recall the statement.

\begin{thrm*}[Heath-Brown]
Let $\mathcal{T}$ be a 1-separated set of reals, contained in an interval of length $T$. Let $|a_n|\lessapprox 1$ be a complex sequence. Then
\[
\sum_{t_1,t_2\in \mathcal{T}}\Bigl|\sum_{n\sim N} a_n n^{i(t_1-t_2)}\Bigr|^2\lessapprox |\mathcal{T}|^2N+|\mathcal{T}|N^2+|\mathcal{T}|^{5/4}T^{1/2}N.
\]
\end{thrm*}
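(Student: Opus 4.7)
The plan is to expand the square, separate the diagonal, and bound the off-diagonal by combining the mean value theorem (MVT) for Dirichlet polynomials with an additive-combinatorial refinement exploiting the structure of the difference set $\mathcal{T}-\mathcal{T}$.

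Expanding and swapping orders of summation,
\[
\sum_{t_1,t_2\in\mathcal{T}}\Bigl|\sum_n a_n n^{i(t_1-t_2)}\Bigr|^2 \;=\; \sum_{n,m\sim N} a_n\bar{a}_m\Bigl|\sum_{t\in\mathcal{T}}(n/m)^{it}\Bigr|^2.
\]
The diagonal $n=m$ contributes $|\mathcal{T}|^2\sum_n|a_n|^2\lesssim|\mathcal{T}|^2 N$, giving the first term of Heath-Brown's bound. For the off-diagonal, it is convenient to regroup the left-hand side by the difference $s=t_1-t_2$, writing the whole sum as $\sum_s \mu(s)|D(s)|^2$, where $D(s)=\sum_n a_n n^{is}$ and $\mu(s)$ counts pairs $(t_1,t_2)\in\mathcal{T}^2$ with $t_1-t_2\approx s$ (passing to a $1$-separated system of representatives $s\in[-T,T]$ of cardinality $\leq|\mathcal{T}|^2$). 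The relevant combinatorial inputs are $\sum_s\mu(s)=|\mathcal{T}|^2$, $\sum_s\mu(s)^2=E(\mathcal{T})\leq|\mathcal{T}|^3$, and $\mu(s)\lesssim|\mathcal{T}|$.

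The second term $|\mathcal{T}|N^2$ comes from applying the MVT directly: for each fixed $t_1$, $\sum_{t_2\in\mathcal{T}}|D(t_1-t_2)|^2\lessapprox(T+N)N$, since $\{t_1-t_2\}$ is $1$-separated in an interval of length $\lesssim T$; summing over $t_1$ gives $|\mathcal{T}|(T+N)N=|\mathcal{T}|TN+|\mathcal{T}|N^2$. To improve the contribution $|\mathcal{T}|TN$ to $|\mathcal{T}|^{5/4}T^{1/2}N$ (which is stronger when $|\mathcal{T}|\lesssim T^2$), the plan is to apply Cauchy--Schwarz,
\[
\sum_s\mu(s)|D(s)|^2 \;\leq\; \Bigl(\sum_s\mu(s)^2\Bigr)^{1/2}\Bigl(\sum_s|D(s)|^4\Bigr)^{1/2} \;\leq\; E(\mathcal{T})^{1/2}\Bigl(\sum_s|D^2(s)|^2\Bigr)^{1/2},
\]
where $D^2$ is a Dirichlet polynomial of length $\lesssim N^2$ with divisor-bounded coefficients. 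The MVT applied to $D^2$ on the $1$-separated representatives in $[-T,T]$ yields $\sum_s|D^2(s)|^2\lessapprox(T+N^2)N^2$, which simplifies to $\lessapprox TN^2$ in the regime $N^2\leq T$. Combined with $E(\mathcal{T})\leq|\mathcal{T}|^3$, this produces $|\mathcal{T}|^{3/2}T^{1/2}N$.

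The main obstacle is then the remaining gap of $|\mathcal{T}|^{1/4}$ between $|\mathcal{T}|^{3/2}T^{1/2}N$ and the target $|\mathcal{T}|^{5/4}T^{1/2}N$. My plan for closing it is a Hal\'asz-type dyadic level-set refinement: split $\mathcal{W}_V:=\{s:|D(s)|\sim V\}$ dyadically, and use the classical large-values estimate $|\mathcal{W}_V|\lessapprox TNV^{-2}$ (itself a consequence of the MVT) to control each level set. The contribution $V^2\sum_{s\in\mathcal{W}_V}\mu(s)$ is then bounded by interpolating the two controls on $\sum_{s\in\mathcal W_V}\mu(s)$ available for each $V$: the trivial global bound $\leq|\mathcal{T}|^2$ and the Cauchy--Schwarz bound $\leq E(\mathcal{T})^{1/2}|\mathcal{W}_V|^{1/2}$. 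Optimizing over $V$ and interpolating these two controls (rather than using the energy bound alone on the full sum) recovers the extra factor of $|\mathcal{T}|^{1/4}$, yielding the sharp $|\mathcal{T}|^{5/4}T^{1/2}N$ term. The delicate step is verifying that the worst-case set $\mathcal{W}_V$ cannot simultaneously saturate both the large-values estimate and the energy bound, so that the balanced interpolation indeed beats the pure $E^{1/2}$ argument.
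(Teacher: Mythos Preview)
The paper does not prove this theorem; it quotes it from Heath-Brown's 1979 paper \cite{HB} and uses it as a black box. So there is no ``paper's own proof'' to compare against. That said, your proposal has a genuine gap at the crucial step, and the argument as sketched cannot reach the exponent $5/4$.

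Your first three steps are correct and give $|\mathcal{T}|^2 N + |\mathcal{T}|N^2 + |\mathcal{T}|^{3/2}T^{1/2}N$. The problem is the final ``dyadic level-set refinement'': the two controls you propose on $\sum_{s\in\mathcal{W}_V}\mu(s)$, namely the trivial bound $|\mathcal{T}|^2$ and the Cauchy--Schwarz bound $E(\mathcal{T})^{1/2}|\mathcal{W}_V|^{1/2}\le |\mathcal{T}|^{3/2}|\mathcal{W}_V|^{1/2}$, are not independent --- the second is exactly the geometric mean of the first and the pointwise bound $|\mathcal{T}|\,|\mathcal{W}_V|$. Combined with any mean-value bound $|\mathcal{W}_V|\lessapprox TN^2V^{-4}$ (from squaring $D$), the contribution $V^2\min\bigl(|\mathcal{T}|^2,\,|\mathcal{T}|\,|\mathcal{W}_V|\bigr)$ is maximized at the crossover $V^4\asymp TN^2/|\mathcal{T}|$, where it equals precisely $|\mathcal{T}|^{3/2}T^{1/2}N$; both the small-$V$ and large-$V$ ranges sum to the same thing. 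So no interpolation among these ingredients can go below $|\mathcal{T}|^{3/2}T^{1/2}N$, and your closing sentence (``the worst-case set $\mathcal{W}_V$ cannot simultaneously saturate both the large-values estimate and the energy bound'') is wishful: a set $\mathcal{T}$ that is a random subset of an arithmetic progression can saturate both simultaneously.

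Heath-Brown's actual argument is qualitatively different. He applies Hal\'asz--Montgomery duality to $\Sigma=\sum_{t_1,t_2}|D(t_1-t_2)|^2$ directly, which produces the zeta-like sum $\sum_n n^{i\tau}$ with $\tau=(t_1-t_2)-(t_3-t_4)$; rather than bounding this pointwise, he uses the approximate functional equation to convert it to a short Dirichlet polynomial of length $\asymp T/N$, and then exploits that $\tau$ is a difference of two elements of $\mathcal{T}-\mathcal{T}$ to bound the resulting expression by Cauchy--Schwarz and a further mean-value estimate over $\mathcal{T}$ itself. This recursive/bilinear structure is what produces the exponent $5/4$; it cannot be recovered from level-set arguments using only $E(\mathcal{T})\le|\mathcal{T}|^3$ and the mean value theorem.
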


This result implies that

\begin{equation}
\sum_{t_1,t_2\in W}\Bigl|\sum_{m\asymp M^k}b_m m^{i(t_1-t_2)}\Bigr|^{2}\lessapprox_k |W|^2M^k+|W|M^{2k}+|W|^{5/4}T^{1/2}M^k.
\label{eq:S2HeathBrown}
\end{equation}

Substituting \eqref{eq:S2Holder} and \eqref{eq:S2HeathBrown} back into \eqref{eq:S2Intermediate}, we see that 

\begin{align}
S_{2}&\lessapprox_{\epsilon,k}\frac{N^2}{M}(|W|^{2}M+M^2|W|^{2-1/k}+|W|^2 M T^{1/2k}|W|^{-3/4k})\nonumber\\
&\lessapprox_{\epsilon,k} N^2 |W|^2+T N |W|^{2-1/k}+N^2|W|^2 \Bigl(\frac{T^{1/2}}{|W|^{3/4}}\Bigr)^{1/k}.
\label{eq:S2Bound}
\end{align}

This gives the result.
\end{proof}

Now we return to the proof of Lemma \ref{lmmappfunceq}, which roughly says $\widehat{h}_t(mN)$ can be thought of as a smoothed version of $t^{-1/2}m^{it}$ supported on $m\asymp t/N$.

\begin{proof}[Proof of Lemma \ref{lmmappfunceq}]
Since $t\sim T_0\ge T^\epsilon$, by Lemma \ref{lmm:Fourier} we have that $\widehat{h}_t(mN)\ll_\epsilon T^{-100}m^{-2}$ unless $|m|\lessapprox T_0/N$. Thus it suffices to just consider terms with $|m|\le M$ for some suitable $M=T_0^{1+o(1)}/N$ at the cost of an $O_\epsilon(T^{-100})$ error term. We focus on the terms with positive $m$; the terms with negative $m$ can be bounded analogously.

We expand the definition of $\widehat{h}_t$ and truncate the integral using the support of $w$ 

 \begin{align*}
\widehat{h}_t(mN)=\int_{-\infty}^\infty w(u)^2 u^{it}e(- mN u) du
=\int_{1/m}^{2M/m} w(u)^2 u^{it}e(-mN u) du.
\end{align*}

Let $H(s):=\int_{0}^\infty w(u)^2 u^{s-1}du$ be the Mellin transform of $h_0$, which is entire and satisfies $H(s)\ll_j |s|^{-j}$ for any $j\in \mathbb{Z}_{>0}$  when $|\Re(s)|\le 10$ (by repeated integration by parts). Applying Mellin inversion  ($w(u)^2=(2\pi i)^{-1}\int_{1-i\infty}^{1+i\infty} H(s)u^{-s}ds$), we have that

 \begin{align*}
\widehat{h}_t(mN)=\frac{1}{2\pi i}\int_{1/m}^{2M/m}\int_{1-i\infty}^{1+i\infty}H(s)u^{it-s}e(- mN u) ds du.
\end{align*}

By the rapid decay of $H$ we may truncate the $s$ integral to $|s|\lessapprox 1$ at the cost of a $O(T^{-100})$ error term. We then make a change of variables $s=1+ir$ and $v=Nmu$

 \begin{align*}
\widehat{h}_t(mN)=\frac{1}{2\pi}\int_{|r|\lessapprox 1}H(1+ir)(mN)^{i(t-r)}\Bigl(\int_{N}^{2NM}v^{-1+i(t-r)}e(-v) dv\Bigr) dr.
\end{align*}

Summing over $1\le m\le  M$ and applying the triangle bound gives

 \begin{align*}
\sum_{\substack{m\le  M}}\widehat{h}_t(mN)&=\frac{1}{2\pi}\mathop{\int}_{|r|\lessapprox 1}H(1+ir)N^{i(t-r)}\sum_{\substack{1\le m\le M}}m^{i(t-r)}\int_{N}^{2NM}\frac{v^{i(t-r)}e(-v) }{v}dv dr\\
&\ll \int_{|r|\lessapprox 1}\Bigl|\sum_{1\le m\le M}m^{i(t-r)}\Bigr| \Bigl|\int_{N}^{2NM}v^{-1+i(t-r)}e(- v) dv\Bigr| dr.
\end{align*}

Integration by parts and the Van-der-Corput first and second derivative bounds (see \cite[Chapter 3, Lemmas 1 and 2]{M2}) then show that (for $|r|\ge 2$)

\begin{align*}
\int_V^{2V} v^{-1+ir}e(-v)dv&\ll \frac{1}{|r|}, &&\text{ if }V\le |r|/20 \text{ or }V\ge 20|r|,\\
\int_V^{2V} v^{-1+ir}e(-v)dv&\ll \frac{1}{|r|^{1/2}}, &&\text{ if }|r|/20\le V\le 20|r|.
\end{align*}

Since $|t-r|\asymp T_0$ when $r\lesssim 1$ (and recalling that $T^\epsilon\le T_0$, $NM\lessapprox T$), together these give

\[
\int_{N}^{2NM}v^{-1+i(t-v)}e(- v) dv\ll T_0^{-1/2}.
\]

This gives the result.
\end{proof}

%
%
%
%

\section{The contribution of \texorpdfstring{$S_3$}{S3}: a key cancellation}

Now we begin to study $S_3$, which is the most difficult term.  Recall that 

\[
S_3=\sum_{m_1,m_2,m_3 \not= 0} I_m, 
\]

where

\[
 I_m = N^3 \sum_{t_1,t_2,t_3\in W}\widehat{h}_{t_1-t_2}(m_1N)\widehat{h}_{t_2-t_3}(m_2N)\widehat{h}_{t_3-t_1}(m_3N). 
 \]
 
 By Lemma \ref{lmm:Fourier}, $|\widehat{h}_t(\xi)|\ll_j (1+|t|)^j/|\xi|^j$  for any $j\in\mathbb{Z}_{\ge 0}$,  and so $\widehat{h}_t$ is rapidly decaying when $|\xi|$ is much bigger than $|t|$, and hence $I_m$ is negligible unless $|m| \lessapprox T/N$.  Thus 

\begin{equation}
S_3= \sum_{0<|m_1|,|m_2|,|m_3| \lessapprox T/N} I_m+O(T^{-100}).
\label{eq:S3Inter}
\end{equation}

The first step in our argument is an estimate for $|I_m|$.   We introduce the function $R(v)$ which will play an important role in our analysis of $S_3$:

\begin{equation}
R(v):=\sum_{t\in W}|v|^{it} = \widehat{W}\Bigl(\frac{\log{|v|}}{-2\pi}\Bigr), \label{eq:RDef}
\end{equation}

where $\widehat{W}(\xi):=\sum_{t\in W}e^{-2\pi i t \xi}$ is the Fourier transform of the distribution with a delta function at each point of $W$ . In this paper we will occasionally find it convenient to work with $\widehat{W}$, but will not work with the distribution directly.

%
%

\begin{prpstn}[Cancellation within the $I_m$ integrals] \label{prpstnkeycancel} We have

\[
|I_m| \ll N^3  \mathop{\int}_{\substack{|m_1 v_1 + m_2 v_2 + m_3| \lessapprox \frac{1}{N}\\  v_1 \asymp v_2 \asymp 1}}  \Big| R(v_1)R\Bigl(\frac{v_2}{v_1}\Bigr)R(v_2 ) \Big| dv_1 dv_2 + O(T^{-200}).
\]

Moreover, if $|m_1|\le |m_2|\le |m_3|$, then $|I_m| = O(T^{-200})$ unless $|m_2| \asymp |m_3|$.  
\end{prpstn}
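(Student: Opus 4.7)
The plan is to Fourier-expand each $\hat h_{t_i-t_j}$ factor in $I_m$ using $\hat h_t(\xi)=\int w(u)^2u^{it}e(-\xi u)\,du$, swap the three resulting $u$-integrals with the sums over $t_1,t_2,t_3\in W$, and group exponentials via
\[
u_1^{i(t_1-t_2)}u_2^{i(t_2-t_3)}u_3^{i(t_3-t_1)}=\Bigl(\frac{u_1}{u_3}\Bigr)^{it_1}\Bigl(\frac{u_2}{u_1}\Bigr)^{it_2}\Bigl(\frac{u_3}{u_2}\Bigr)^{it_3}.
\]
The $t_i$-sums then collapse into $R$-functions, giving
\[
I_m=N^3\int w(u_1)^2w(u_2)^2w(u_3)^2\,R(u_1/u_3)R(u_2/u_1)R(u_3/u_2)\,e(-N(m_1u_1+m_2u_2+m_3u_3))\,du_1du_2du_3.
\]

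Next I change variables $(u_1,u_2,u_3)\mapsto(v_1,v_2,u_3)$ with $v_1=u_1/u_3$ and $v_2=u_2/u_3$ (Jacobian $u_3^2$). The $R$-factors become $R(v_1)R(v_2/v_1)R(1/v_2)$, depending only on $v_1,v_2$, and the phase factorises as $e(-Nu_3A)$ with $A:=m_1v_1+m_2v_2+m_3$. I perform the $u_3$-integral first, producing $\hat G(NA)$ where
\[
G(u_3):=w(u_3)^2w(v_1u_3)^2w(v_2u_3)^2u_3^2.
\]
The support of $G$ forces $u_3,v_1u_3,v_2u_3\in[1,2]$, hence $v_1,v_2\in[1/2,2]$ and $u_3\asymp 1$; on this range $G$ and all its $u_3$-derivatives are uniformly bounded in $v_1,v_2$, so repeated integration by parts gives $|\hat G(NA)|\lesssim_j(1+N|A|)^{-j}$ for any $j$. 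Splitting the $(v_1,v_2)$-integral at $N|A|=T^\epsilon$, bounding $|R|\le|W|\le T^{O(1)}$ crudely on the tail, and choosing $j$ large in terms of $\epsilon$ makes the tail contribute $O(T^{-200})$. Since $|R(1/v_2)|=|R(v_2)|$ and the $T^\epsilon$ can be absorbed into the implicit $\lesssim$, the first claim follows.

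For the moreover assertion, assume $|m_1|\le|m_2|\le|m_3|$ and $|m_3|\ge C|m_2|$ for a large absolute constant $C$. On the support $(v_1,v_2)\in[1/2,2]^2$,
\[
|A|\ge|m_3|-|m_2v_2|-|m_1v_1|\ge|m_3|-2|m_2|-2|m_1|\ge(1-4/C)|m_3|\gtrsim 1,
\]
so $N|A|\gtrsim N$ throughout, and the rapid decay of $\hat G$ gives $|I_m|=O(T^{-200})$ directly. Hence unless $|m_3|\asymp|m_2|$ the term is negligible.

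The main technical point is keeping the integration-by-parts constants for $G$ uniform in $v_1,v_2$; this is precisely why the statement restricts to $v_1\asymp v_2\asymp 1$, outside of which $G\equiv 0$ and the conclusion is vacuous. Everything else is a mechanical change of variables followed by non-stationary phase in $u_3$.
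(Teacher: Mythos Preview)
Your proof is correct and follows essentially the same route as the paper: expand $\hat h_t$ as an integral to collapse the $t_i$-sums into $R$-factors, change variables to $(v_1,v_2,u_3)=(u_1/u_3,u_2/u_3,u_3)$, and use non-stationary phase in $u_3$ to localise to $|m_1v_1+m_2v_2+m_3|\lessapprox 1/N$. Your explicit identification of $G$ and the triangle-inequality argument for the ``moreover'' clause are slightly more detailed than the paper's presentation, but the underlying ideas are identical.
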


\begin{proof} 
To simplify notation, let $w_1(\mathbf{u}):=w(u_1)^2w(u_2)^2w(u_3)^2$. Expanding the definition of $\widehat{h}_t$ as an integral and swapping the order of summation and integration,  we have

\begin{align}
I_m&=N^3 \sum_{t_1,t_2,t_3\in W}\int_{\mathbb{R}^3}e(-N \mathbf{m}\cdot \mathbf{u})w_1(\mathbf{u})u_1^{i(t_1-t_2)}u_2^{i(t_2-t_3)}u_3^{i(t_3-t_1)}d\mathbf{u}\nonumber\\
&= N^3\int_{\mathbb{R}^3}e(-N \mathbf{m}\cdot \mathbf{u})w_1(\mathbf{u}) R\Bigl(\frac{u_1}{u_3}\Bigr)R\Bigl(\frac{u_2}{u_1}\Bigr)R\Bigl(\frac{u_3}{u_2}\Bigr)d\mathbf{u}.\label{eq:ImExpansion}
\end{align}

In \eqref{eq:ImExpansion},  the $R$ functions depend on $u_1/u_3$, $u_2/u_1$ and $u_3/u_2$.   We therefore rewrite the integral using these variables.  
We define $v_1$ and $v_2$ by

$$ v_1 := \frac{u_1}{u_3}, \qquad v_2 := \frac{u_2}{u_3}. $$

We rewrite the integral $I_m$ in terms of the variables $v_1, v_2, u_3$.  
When we change variables,  the $R$ factors depend on $v_1, v_2$ but not on $u_3$.  

\[ R\Bigl(\frac{u_1}{u_3}\Bigr)R\Bigl(\frac{u_2}{u_1}\Bigr)R\Bigl(\frac{u_3}{u_2}\Bigr) = R ( v_1 ) R \Big( \frac{v_2}{v_1} \Big) R \Big( \frac{1}{v_2} \Big) \]

The exponential factor also works out in a nice way in the new variables:

$$ e^{-N \mathbf{m} \cdot \mathbf{u}} = e^{-N(m_1 u_1 + m_2 u_2 + m_3 u_3)} = e^{-N (m_1 v_1 + m_2 v_2 + m_3) u_3}. $$

A Jacobian computation  shows that

$$ du_1 du_2 du_3 = u_3^2 dv_1 dv_2 du_3. $$

So in the new variables, our integral $I_m$ becomes

\[
 N^3 \int_{\mathbb{R}^3} e(-N (m_1 v_1+ m_2 v_2+m_3)u_3)w_{2}(u_3,v_1,v_2)R(v_1)R\Bigl(\frac{v_2}{v_1}\Bigr)R\Bigl(\frac{1}{v_2}\Bigr) dv_1 dv_2 du_3,
 \]
 
where 

\begin{equation}
w_2(u_3,v_1,v_2):=u_3^2w(u_3)^2w(v_1u_3)^2w(v_2u_3)^2.
\label{eq:w2Def}
\end{equation}

Since the $R$ factors do not involve $u_3$, we rewrite our formula to do the $u_3$ integral first:

\[
N^3 \int_{\mathbb{R}^2} \left(  \int_{\mathbb{R}} e(-N (m_1 v_1+ m_2 v_2+m_3)u_3)w_2(u_3,v_1,v_2) du_3 \right) R(v_1)R\Bigl(\frac{v_2}{v_1}\Bigr)R\Bigl(\frac{1}{v_2}\Bigr) dv_1 dv_2.
\]

A key observation in our proof is that we can analyze the norm of this inner integral very accurately using non-stationary phase. Recalling the definition \eqref{eq:w2Def} of $w_2$, we see that for any $j\in \mathbb{Z}_{\ge 0}$, $w_2(u_3,v_1,v_2)$ has $j^{th}$ derivative with respect to $u_3$ bounded by $O_j(1)$ (since $w$ is supported on $[1,2]$ with $\|w^{(\ell)}\|_\infty \ll_\ell 1$ for all $\ell\in \mathbb{Z}_{\ge 0}$). Thus for any $\eta>0$, the inner integral is $O_\eta(T^{-300})$ unless $|m_1 v_1+m_2 v_2+m_3| \le T^\eta/N$ by repeated integration by parts.   In general,  the inner integral has size $\ll 1$.   
In addition, $w_2(u_3,v_1,v_2)$ vanishes unless $v_1, v_2 \in [1/2,2]$, because $w(u)$ is supported on $u\in [1,2]$.   Therefore, the inner integral vanishes unless $v_1 \in[1/2,2]$ and $v_2 \in[1/2,2]$.  Using these bounds for the inner integral and then using the triangle inequality, we see that since $\eta>0$ was arbitrary

\[
|I_m| \ll N^3  \mathop{\int}_{\substack{|m_1 v_1 + m_2 v_2 + m_3| \lessapprox \frac{1}{N}\\ v_1, v_2 \in[1/2,2]}}  \Big| R(v_1)R\Bigl(\frac{v_2}{v_1}\Bigr)R\Bigl(\frac{1}{v_2}\Bigr) \Big| dv_1 dv_2+O(T^{-200}).
\]

Since $|R(v)| = |R(1/v)|$,  we can replace $R(1/v_2)$ by $R(v_2)$.   (This is not really important, but it makes later computations cleaner.)

Finally,  this integral vanishes unless we can find $v_1 \asymp 1$ and $v_2 \asymp 1$ so that $m_1 v_1 + m_2 v_2 + m_3$ is almost zero.  If $|m_1| \le |m_2| \le |m_3|$, this can only happen if $|m_2| \asymp |m_3|$.  This gives the last claim in the proposition. 
\end{proof}

%
%

\begin{rmk}
The cancellation in the inner integral when $|m_1 v_1 + m_2 v_2 + m_3|$ is not $\lessapprox 1/N$ is one of the key observations in our proof.    This cancellation is specific to Dirichlet polynomials as opposed to more general trigonometric polynomials.  For instance, one may consider a `generalized' Dirichlet polynomial of the form $\tilde{D}(t) = \sum_{n \sim N} b_n e^{i t \phi(n)}$, where the function $\phi(n)$ has smoothness and convexity properties similar to those of $\log n$.  One can follow the argument above, but the inner integral will have the form $\int e( N g_{\mathbf{m}, v_1, v_2}(u_3)) du_3$ for some function $g_{\mathbf{m}, v_1, v_2}(u_3)$.  In general, the function $g_{\mathbf{m}, v_1, v_2}(u_3)$ will not be linear (or monomial) in $u_3$ and so we would not be in the special situation where the whole integral is either oscillating or stationary.  Thus one would expect to have to use stationary phase around stationary points in the integral, and the bounds would not work out as they do here.
\end{rmk}

Because of the last claim in Proposition \ref{prpstnkeycancel},  we can restrict attention to $m$ with $0 < |m_1| \le |m_2| \asymp |m_3|$.   The domain of integration can be rewritten in the form

\[ \left| v_2 - \frac{m_1 v_1 + m_3}{-m_2} \right| \lessapprox \frac{1}{|m_2|N} \asymp \frac{1}{|m_3| N}. \]

So the domain of integration is essentially the $\frac{1}{N |m_3|}$-neighborhood of the curve $v_2 = \frac{m_1 v_1 + m_3}{-m_2}$.  Therefore,  $|I_m|$ is morally bounded by

$$\frac{N^3}{N |m_3|} \int_{v_1 \asymp 1} \Big| R ( v_1 ) R \Big( \frac{ m_1 v_1 + m_3}{-m_2 v_1} \Big) R \Big( \frac{m_1 v_1 + m_3}{-m_2} \Big) \Big| dv_1. $$

We can make this rigorous by using a smoothed version of $R$.  Define a smoothed version of $| R(u) |$ in terms of compactly supported bump functions $\tilde{\psi}_1$, $\tilde{\psi}_2$ and a parameter $M\ge 1$ by

\begin{equation} \label{eq:RtDef}
\tilde{R}=\tilde{R}_{\tilde{\psi}_1,\tilde{\psi}_2,M}(u) :=\Big(  \int NM \tilde{\psi}_1(N M (u - u')) \tilde{\psi}_2(u')|R(u')|^2 du' \Big)^{1/2}.
\end{equation}

The following proposition gives an expansion of $S_3$ in terms of such integrals.

%
%

\begin{prpstn}[Expansion of $S_3$]\label{prpstn:S3Expansion}
There is a choice of $1\le M_1\le M\lessapprox T/N$ and a choice of non-negative bump functions $\tilde{\psi}_1$, $\tilde{\psi_2}$ with $\tilde{\psi}_1(x)$ supported on $|x|\lessapprox 1$ and satisfying $\|\tilde{\psi}_1^{(j)}\|_\infty\lessapprox_j 1$ for all $j\in \mathbb{Z}_{\ge 0}$ and $\tilde{\psi}_2(x)$ supported on $x\asymp 1$ and satisfying $\tilde{\psi}_2^{(j)}\ll_j 1$ for all $j\in\mathbb{Z}_{\ge 0}$ and with $\tilde{\psi}_1(0)=\tilde{\psi}_2(1)=1$, such that

\[
S_3 \lessapprox \frac{N^2}{M} \sum_{\substack{|m_1| \sim M_1\\    |m_2|,  |m_3| \asymp M}}  \tilde{I}_m+O(T^{-100}),
\]

where

\[
\tilde{I}_m:= \int_{v_1 \asymp 1}  \Big| R ( v_1) \tilde{R} \Big( \frac{ m_1 v_1 + m_3}{m_2 v_1} \Big) \tilde{R} \Big( \frac{m_1 v_1 + m_3}{m_2} \Big) \Big| dv_1.  
\]
\end{prpstn}

\begin{proof} 
Recall from \eqref{eq:S3Inter} that $S_3$ is bounded by

\[
S_3\le \sum_{0<|m_1|,|m_2|,|m_3| \lessapprox T/N} |I_m| +O(T^{-100}).
\]

From \eqref{eq:ImExpansion}, we have

\[
I_m= N^3\int_{\mathbb{R}^3}e(-N \mathbf{m}\cdot \mathbf{u})w_1(\mathbf{u}) R\Bigl(\frac{u_1}{u_3}\Bigr)R\Bigl(\frac{u_2}{u_1}\Bigr)R\Bigl(\frac{u_3}{u_2}\Bigr)d\mathbf{u}.
\]

From the definition \eqref{eq:RDef} for $R(v)$, we see that $R(1/v) = \overline R(v)$. Therefore we see that $I_{(m_1,m_2,m_3)}=\overline I_{(m_2,m_1,m_3)}$, and similarly for any other transposition of $(m_1,m_2,m_3)$. Thus $|I_m|$ is invariant under any permutation of $(m_1,m_2,m_3)$, and so we can reduce to the case $|m_1|\le |m_2|\le |m_3|$ at the cost of a factor of 6. By Proposition \ref{prpstnkeycancel}, such terms are negligible unless $|m_2|\asymp |m_3|$. Thus, by choosing dyadic scales to maximize the right hand side, we find that there is an $M_1\le M\lessapprox T/N$ such that

\begin{equation}
S_3\lessapprox  \sum_{\substack{|m_1|\sim M_1 \\ |m_2|\asymp M\\ |m_3|\asymp M}} |I_m| +O(T^{-100}).
\label{eq:S3Expansion}
\end{equation}

By Proposition \ref{prpstnkeycancel},  we have for $m_2\asymp M$

\[
|I_m| \ll N^3  \int_{v_1 \asymp 1}  | R(v_1) |  \Biggl(\int\limits_{\substack{v_2\asymp 1\\ | v_2 - \frac{m_1 v_1 + m_3}{-m_2} | \lessapprox  \frac{1}{M N}}}  \Big| R\Bigl(\frac{v_2}{v_1}\Bigr)R( v_2) \Big| dv_2 \Bigg) dv_1+O(T^{-200}).
\]

Using Cauchy-Schwarz (and a change of variables $v_2\mapsto v_2v_1$ for the first factor),  we bound the inner integral by

\[  
\Biggl( \int\limits_{\substack{v_2\asymp 1\\ M N| v_2 - \frac{m_1 v_1 + m_3}{-m_2v_1} | \lessapprox 1 }}  | R(v_2) |^2  dv_2 \Biggr)^{1/2}  \Biggl( \int\limits_{\substack{v_2\asymp 1\\ MN| v_2 - \frac{m_1 v_1 + m_3}{-m_2} | \lessapprox  1}}  | R( v_2 ) |^2 dv_2 \Biggr)^{1/2}.
\]
We can now choose a smooth bump function $\tilde{\psi}_2(v_2)$ which majorizes both the integration constraints $v_2\asymp 1$ and satisfies the support and derivative conditions of the proposition. Similarly, we can choose a bump function $\tilde{\psi}_1$ such that $\tilde{\psi}_1(MN(\frac{m_1v_1+m_3}{-m_2v_1}-v_2))$ majorizes the integration constraint $M N| v_2 - \frac{m_1 v_1 + m_3}{-m_2v_1} | \lessapprox 1$ in the first factor above and $\tilde{\psi}_1(MN(\frac{m_1v_1+m_3}{-m_2}-v_2))$ majorizes the corresponding constraint in the second factor and $\tilde{\psi}_1$ satisfies the support and derivative constraints of the proposition. Recalling the definition \eqref{eq:RtDef} of $\tilde{R}$, we then see that for this choice of $\tilde{\psi}_1,\tilde{\psi}_2$ and $M$, the product of integrals above is
\[ 
\ll \frac{1}{MN}  \tilde{R} \Big( \frac{m_1 v_1 + m_3}{-m_2 v_1} \Big) \tilde{R} \left( \frac{m_1 v_1 + m_3}{-m_2} \right) . 
\]

Thus we find that

\[
|I_{m_1,m_2,m_3}| \ll \frac{N^2}{M} \tilde{I}_{m_1,-m_2,m_3}+O(T^{-100}).
\]

Finally, since we are summing over $m_2$ with $|m_2| \asymp M$, we can replace $- m_2$ with $m_2$ without changing the overall sum. Substituting this into our expression \eqref{eq:S3Expansion} for $S_3$ above then gives the result.
\end{proof}

%
%
%
%

\section{Basic estimate for the low energy case}

In this section,  we begin to estimate $S_3$ using Proposition \ref{prpstn:S3Expansion}.    Recall that $R(v) = \sum_{t \in W} |v|^{it}$.   The best bound for $|R(v)|$ we can hope for is square root cancellation: $|R(v)| \ll |W|^{1/2}$.   If indeed $|R(v)| \approx |W|^{1/2}$ for all $v \asymp 1$, then we get $S_3 \lessapprox N^2 M^2 |W|^{3/2} \lessapprox T^2 |W|^{3/2}$.   We will see more generally that this bound holds whenever the energy of $W$ is very small. 

\begin{prpstn}[\texorpdfstring{$S_3$}{S3} controlled by energy] \label{prpstnsimpleS3} If $W$ is a $T^\epsilon$-separated set contained in an interval of length $T$,  then

\[ S_3 \lessapprox_\epsilon T^2 |W|^{1/2} E(W)^{1/2}. \]

\end{prpstn}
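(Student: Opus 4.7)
The plan is to apply Proposition \ref{prpstn:S3Expansion} to reduce matters to bounding $\sum_m \tilde I_m$, where $m=(m_1,m_2,m_3)$ ranges over $|m_1|\sim M_1$ and $|m_2|,|m_3|\sim M$ with $M_1\le M\lessapprox T/N$, and
\[
\tilde I_m = \int_{v_1\asymp 1}|R(v_1)|\,|\tilde R_M(f_m(v_1))|\,|\tilde R_M(g_m(v_1))|\,dv_1,
\]
writing $f_m(v_1)=(m_1 v_1+m_3)/(m_2 v_1)$ and $g_m(v_1)=(m_1v_1+m_3)/m_2$. The two inputs I would use are the standard $L^2$ and $L^4$ bounds for $R$ on $v_1\asymp 1$: orthogonality and the $T^\epsilon$-separation of $W$ (which makes the off-diagonal $1/|t-t'|$ terms summable) yield $\|R\|_{L^2}^2\lessapprox |W|$, and expanding $|R|^4$ while identifying near-diagonal contributions with approximate additive quadruples yields $\|R\|_{L^4}^4\lessapprox E(W)$. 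Both transfer to the mollification $\tilde R_M$ since the smoothing scale $1/(NM)$ is very small.

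Next I would apply Cauchy--Schwarz in $v_1$ to each $\tilde I_m$ to get
\[
\tilde I_m\lessapprox |W|^{1/2}\Bigl(\int|\tilde R_M(f_m)|^2|\tilde R_M(g_m)|^2\,dv_1\Bigr)^{1/2},
\]
and then Cauchy--Schwarz over the sum in $m$, using that the total number of triples is $\asymp M_1 M^2$, to obtain
\[
\sum_m\tilde I_m\lessapprox |W|^{1/2}(M_1M^2)^{1/2}\Bigl(\sum_m\int|\tilde R_M(f_m)|^2|\tilde R_M(g_m)|^2\,dv_1\Bigr)^{1/2}.
\]
Pointwise AM--GM, $|\tilde R_M(f_m)|^2|\tilde R_M(g_m)|^2\le\tfrac12(|\tilde R_M(f_m)|^4+|\tilde R_M(g_m)|^4)$, reduces matters to bounding
\[
\Sigma_f:=\sum_m\int|\tilde R_M(f_m)|^4\,dv_1,\qquad\Sigma_g:=\sum_m\int|\tilde R_M(g_m)|^4\,dv_1.
\]

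The central calculation is to show $\Sigma_f,\Sigma_g\lessapprox M_1M^2E(W)$. For each, I would change variables $u=f_m(v_1)$ (resp.\ $u=g_m(v_1)$) and then swap with the $m$-sum, producing an expression of the form $\int|\tilde R_M(u)|^4 n(u)\,du$ weighted by the appropriate inverse-Jacobian factor, where $n(u)$ counts triples $m$ whose image under $f_m$ (resp.\ $g_m$) contains $u$. For $f_m$ the Jacobian is $|f_m'|\asymp 1$ and the image $f_m([1/2,2])$ has length $\asymp 1$, so essentially all triples cover a given $u$, giving $n(u)\lesssim M_1M^2$ and hence $\Sigma_f\lessapprox M_1M^2E(W)$. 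For $g_m$ the Jacobian $|g_m'|\sim M_1/M$ is small, but this is precisely compensated by the small image length $\sim M_1/M$; a direct covering count then gives $n(u)\lesssim M_1^2 M$, and so $\Sigma_g\lessapprox(M/M_1)\cdot M_1^2M\cdot E(W)=M_1M^2E(W)$ as well.

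Combining gives $\sum_m\tilde I_m\lessapprox |W|^{1/2}M_1M^2E(W)^{1/2}$, so by Proposition \ref{prpstn:S3Expansion},
\[
S_3\lessapprox \frac{N^2}{M}\cdot|W|^{1/2}M_1M^2E(W)^{1/2}=N^2M_1M\cdot|W|^{1/2}E(W)^{1/2}\lessapprox T^2|W|^{1/2}E(W)^{1/2},
\]
where the last step uses $M_1\le M\lessapprox T/N$. The main obstacle is the asymmetry caused by the small slope of $g_m$ in the regime $M_1\ll M$, which I would address by performing the sum over $m$ \emph{before} completing the change of variables, so that the small image of each $g_m$ is absorbed into the correspondingly small covering multiplicity.
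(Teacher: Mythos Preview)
Your proof is correct and follows essentially the same approach as the paper: apply Proposition~\ref{prpstn:S3Expansion}, separate the three $R$-factors via H\"older/Cauchy--Schwarz, and invoke the $L^2$ and $L^4$ moment bounds for $R$ (Lemmas~\ref{RL2} and~\ref{RL4}). The paper uses a single H\"older with exponents $(2,4,4)$ per $m$, changes variables with Jacobian factors $\asymp 1$ and $\sim M/M_1$, and then sums trivially over $m$, whereas you use two Cauchy--Schwarz steps plus AM--GM and a covering/multiplicity count for $\Sigma_g$; these are minor reorganizations of the same estimate and yield the same bound.
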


%
%

\begin{rmk}  If we look at the critical case when $T = N^{5/4}$ and $\sigma = 3/4$, then this estimate (together with our bounds for $S_2$) gives an improvement to the basic orthogonality estimate (\ref{basicorth}) when $E(W)$ is significantly below $|W|^{7/3}$.  In the special case when $E(W) \approx |W|^2$, this Proposition is enough to prove our main theorem, Theorem \ref{thrm:LargeValues}.  On the other hand, when $E(W)$ is very large in terms of $|W|$, then we will get good estimates using Lemma \ref{lmm:BasicEnergy}.  However there is an intermediate range of energy that is not yet covered.  In the next two sections, we will develop a strengthening of this proposition that gives new estimates for a wider range of energies.  
\end{rmk}

We begin with some basic lemmas about the moments of $R$.

\begin{lmm}[$L^2$ bound] \label{RL2} Let $W$ be a $T^\epsilon$-separated set contained in an interval of length $T$. Then
\[ \int_{v \asymp 1} |R(v)|^2 dv \ll_\epsilon |W| .\]
\end{lmm}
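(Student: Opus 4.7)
The plan is to expand the square and use the cancellation of the off-diagonal terms, exactly as in the preceding Fourier estimates (Lemma \ref{lmm:Fourier} and Lemma \ref{sumsi2}).

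First I would introduce a smooth bump $\psi \ge 0$ supported on $v \asymp 1$ with $\psi \equiv 1$ on the region of integration and $\|\psi^{(j)}\|_\infty \lesssim_j 1$, so that
\[
\int_{v\asymp 1}|R(v)|^2 dv \le \int \psi(v)|R(v)|^2 dv = \sum_{t_1,t_2 \in W} \int \psi(v) v^{i(t_1 - t_2)} dv.
\]
The diagonal terms $t_1 = t_2$ contribute $|W| \int \psi(v) dv \lesssim |W|$, which is the desired main term. It remains to show the off-diagonal terms are negligible.

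For $t_1 \ne t_2$, the integral $\int \psi(v) v^{i(t_1-t_2)} dv$ is a Mellin-transform-type object that is essentially a Fourier transform: substituting $v = e^u$ (or applying $\partial_v v^{it} = \tfrac{it}{v} v^{it}$ and integrating by parts $j$ times) gives
\[
\Bigl|\int \psi(v) v^{i(t_1-t_2)} dv\Bigr| \lesssim_j \frac{1}{|t_1 - t_2|^j}
\]
for every $j \in \mathbb{N}$, by the same non-stationary phase argument used to prove Lemma \ref{lmm:Fourier}. Because $W$ is $T^\epsilon$-separated and contained in an interval of length $T$, taking $j$ large (say $j = \lceil 2/\epsilon \rceil + 1$) makes the off-diagonal contribution bounded by
\[
\sum_{\substack{t_1, t_2 \in W \\ t_1 \ne t_2}} |t_1 - t_2|^{-j} \lesssim_\epsilon |W|\sum_{k \ge 1}(kT^\epsilon)^{-j} \lesssim_\epsilon |W|,
\]
which is absorbed into the main term.

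There is no real obstacle here — this is a standard orthogonality computation, and the $T^\epsilon$-separation is precisely the hypothesis that makes the off-diagonal contribution summable. The only small care is to choose $j$ depending on $\epsilon$ so the geometric series over lattice-spacing multiples of $T^\epsilon$ converges.
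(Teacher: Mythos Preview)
Your proof is correct and takes essentially the same approach as the paper: majorize by a smooth bump, expand the square, and use rapid decay of $\int \psi(v) v^{i(t_1-t_2)} dv$ together with the $T^\epsilon$-separation to kill the off-diagonal. The only cosmetic difference is that the paper substitutes $v = e^\tau$ to write the integral as a genuine Fourier transform $\hat\psi_2(t_1-t_2)$ before invoking rapid decay, whereas you integrate by parts directly; the content is identical. (Your choice $j = \lceil 2/\epsilon\rceil + 1$ is harmless overkill---any $j \ge 2$ already makes $\sum_{k\ge 1}(kT^\epsilon)^{-j}$ converge.)
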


%
%

\begin{proof} 
Let $\psi_1(v)$ be a smooth bump function which majorizes the range of integration of the integral in the lemma and is supported on $v\asymp 1$ (so satisfies $\|\psi_1^{(j)}\|_\infty \ll_j 1$ for all $j\in \mathbb{Z}_{\ge 0}$). Then we have

\[ \int_{v \asymp 1} |R(v)|^2 dv \le \int \psi_1(v) | R(v) |^2 dv.\]

We substitute the definition of $R(v)$ from \eqref{eq:RDef}, and let $\psi_2(\tau) :=2\pi e^{-2\pi\tau} \psi_1( e^{-2\pi\tau})$. Then, making a change of variables $v=e^{-2\pi \tau}$ gives

\[ \int \psi_1(v) | R(v) |^2 dv = \int  \psi_2(\tau) |  \widehat{W}(\tau)|^2 d \tau= \sum_{t_1, t_2 \in W} \widehat{\psi}_2(t_1 - t_2). \]

Note that $\psi_2$ is a smooth bump around the origin with $\|\psi_2^{(j)}\|_\infty\ll_j 1$ for all $j\in \mathbb{Z}_{\ge 0}$, so $|\widehat{\psi}_2(\xi)|\ll_j |\xi|^{-j}$ for any $j\in \mathbb{Z}_{\ge 0}$. Since $W$ is $T^\epsilon$-separated, if $t_1\ne t_2$ we have that $\widehat{\psi}_2(t_1-t_2)\ll_\epsilon T^{-100}$, so the terms with $t_1\ne t_2$ are negligible. The terms with $t_1=t_2$ contribute $\ll |W|$ to the sum above. Thus the total sum is $O_\epsilon(|W|)$, as required.
\end{proof}

%
%

\begin{lmm}[$L^4$ bound] \label{RL4}
Let $M$, $\tilde{\psi}_1$, $\tilde{\psi}_2$ be as in Proposition \ref{prpstn:S3Expansion}. Then we have

\[  \int_{v \asymp 1} |\tilde{R}(v)|^4 dv \lessapprox E(W)\qquad \text{ and} \qquad \int_{v \asymp 1} |R(v)|^4 dv \lessapprox E(W).  \]

\end{lmm}

%
%

\begin{proof} 
From the definition \eqref{eq:RtDef} of $\tilde{R}$, recalling that $\tilde{\psi}_1,\tilde{\psi}_2\ll 1$ are supported on $|x|\lessapprox 1$ and $|x|\asymp 1$ respectively, and Cauchy-Schwarz, we have

\begin{align*}
\int_{v\asymp 1}|\tilde{R}(v)|^4dv&\lessapprox N^2M^2  \int\limits_{\substack{v\asymp 1\\ |u-v|\lessapprox 1/NM\\ |u'-v|\lessapprox 1/NM}}   |R(u')|^4 du'   du dv\lessapprox \int_{u'\asymp 1}|R(u')|^4 du'.
\end{align*}

Therefore it suffices to prove the result for $R$. We recall that $R(v)=\widehat{W}(\log{|v|})$, so by a change of variables $v=e^\tau$ we see that it suffices to show

\[
\int_{\tau\ll 1}|\widehat{W}(\tau)|^4d\tau\lessapprox E(W).
\]

Let $\eta>0$ and let $\psi_1$ be a smooth bump supported on $\tau\ll 1$ such that $\psi_1(\tau/T^\eta)$ majorizes the range of integration. Then we see that

\begin{align*}
\int_{\tau\asymp 1}|\widehat{W}(\tau)|^4d\tau&\le \int \psi_1\Bigl(\frac{\tau}{T^\eta}\Bigr)|\widehat{W}(\tau)|^4d\tau\\
&=\sum_{t_1,t_2,t_3,t_4\in W}\int \psi_1\Bigl(\frac{\tau}{T^\eta}\Bigr)e(\tau(t_1+t_2-t_3-t_4))d\tau\\
&=T^\eta \sum_{t_1,t_2,t_3,t_4\in W} \widehat{\psi_1}\Bigl(T^\eta(t_3+t_4-t_1-t_2)\Bigr).
\end{align*}

Since $\widehat{\psi_1}$ decays rapidly, we may restrict the summation to $|t_1+t_2-t_3-t_4|\le 1$ at the cost of an $O_\eta(T^{-100})$ error term. The remaining terms contribute $\ll T^\eta E(W)$. Thus, letting $\eta\rightarrow 0$ we obtain

\[
\int_{\tau\asymp 1}|\widehat{W}(\tau)|^4d\tau\lessapprox E(W).\qedhere
\]

\end{proof}

Proposition \ref{prpstnsimpleS3} follows quickly from Lemma \ref{RL2} and Lemma \ref{RL4}, but before we establish this we record a Fourier decay estimate for $\tilde{R}$ which will be needed in later sections. If $f$ is a version of $|R|^2$ smoothed over intervals of length $1/B$,  the Fourier transform of $f$ will decay rapidly beyond $B$, and since $R$ is essentially constant on intervals of length $1/T$, $f$ cannot be too small on a $1/T$-neighbourhood of 1.

\begin{lmm}[Fourier decay of smoothenings of $R$]\label{lmm:SmoothedFourierDecay}
Let $\psi_1,\psi_2,\psi_3$ be smooth non-negative bump functions satisfying:
\begin{enumerate}
\item $\psi_1(t)$ and $\psi_2(t)$ are supported on $t\asymp 1$, and $\psi_3(t)$ is supported on $|t|\lessapprox 1$.
\item $\psi_1(1)=\psi_2(1)=\psi_3(0)=1$.
\item For any $j\in \mathbb{Z}_{\ge 0}$ we have $\psi_1^{(j)},\psi_2^{(j)}\ll_j 1$, $\psi_3^{(j)}\lessapprox_j 1$.
\end{enumerate}
Let $T^\epsilon\le B\lessapprox T$ and
\[
f(u) := \psi_1(u)  \int B \psi_3(B (u - u')) \psi_2(u')|R(u')|^2 du'.
\]

Then for all $j\in\mathbb{Z}_{\ge 0}$ we have
\[\widehat{f}(\xi)\lessapprox_j \frac{T^j}{|\xi|^j}\sup_uf(u).\]
\end{lmm}
\begin{proof}
Let 

\[
f_1(u):=\int B \psi_3(B (u - u')) \psi_2(u')|R(u')|^2 du',\qquad g(u):=\psi_2(u)|R(u)|^2.
\]

 Then $f_1(u)$ is a convolution of $B \psi_3(B u)$ and $g(u)$, so $\widehat{f}_1(\xi)$ has Fourier transform $\widehat{\psi_3}(\xi/B)\widehat{g}(\xi)$. Since $|R(u)|^2\le|W|^2$ and $\psi_2$ is supported on $|x|\asymp 1$, we see that $\widehat{g}(\xi)\ll |W|^2$. The derivative bounds on $\psi_3$ imply that $\widehat{\psi}_3(\xi)\lessapprox_j B^j/(1+|\xi|)^j$ for all $j\in \mathbb{Z}_{\ge 0}$. Thus $\widehat{f}_1(\xi) \lessapprox_j|W|^2 B^j/(1+|\xi|)^j$ for $j\in\mathbb{Z}_{\ge 0}$. Since $f(u)=\psi_1(u)f_1(u)$ we have $\widehat{f}=\widehat{\psi_1}\ast\widehat{f_1}$ and $\widehat{\psi_1}(\xi)\ll_j (1+|\xi|)^{-j}$ for $j\in\mathbb{Z}_{\ge 0}$ from the derivative bound on $\psi_1$. This gives
\[
\widehat{f}(\xi)\lessapprox_j |W|^2\Bigl(\frac{B}{|\xi|}\Bigr)^j\qquad\text{ for all $j\in \mathbb{Z}_{\ge 0}$.}
\]
 Since $W\subseteq [0,T]$, we have that $|R(u)|^2\gg |W|^2$ if $u$ is a sufficiently small multiple of $1/T$ from $1$. Since $\psi_2(1)=\psi_3(0)=1$ we also have that $\psi_2(u)\gg 1$ if $u$ is sufficiently close to 1 and $\psi_3(B(1-u))\gg 1$ if $u$ is a sufficiently small multiple of $1/B$ from 1. Since $B\lessapprox T$, we find from restricting $u$ to a neighbourhood of 1 of width a small multiple of $\min(1/T,1/B)$, that
 \[
f(1)= \int B \psi_3(B(1-u))\psi_2(u) |R(u)|^2 du\gtrapprox  \frac{B}{T}|W|^2.
 \]
 Thus for any $j\in \mathbb{Z}_{\ge 0}$,

\[
\widehat{f}(\xi)\lessapprox_j  |W|^2\Bigl(\frac{B}{|\xi|}\Bigr)^j\lessapprox \frac{TB^{j-1}}{|\xi|^j}f(1)\lessapprox_j \frac{T^j}{|\xi|^j}\sup_uf(u).\qedhere
\]
\end{proof}

We now return to  prove Proposition \ref{prpstnsimpleS3}.

\begin{proof}[Proof of Proposition \ref{prpstnsimpleS3}] Starting with Proposition \ref{prpstn:S3Expansion}, we have for some $M_1\le M\lessapprox T/N$ and some suitable bumps $\tilde{\psi}_1$, $\tilde{\psi}_2$

\[S_3 \lessapprox  \sum_{\substack{|m_1| \sim M_1\\  |m_2|,  |m_3| \asymp M}}  \frac{N^2}{M} \int_{v_1 \asymp 1}  \Big| R ( v_1 ) \tilde{R} \Big( \frac{ m_1 v_1 + m_3}{m_2 v_1} \Big) \tilde{R} \Big( \frac{m_1 v_1 + m_3}{m_2} \Big) \Big| dv_1 . \]

Using H\"older's inequality,  we find that the integral over $v_1$ is bounded by

\begin{align*} 
 \left( \int_{v_1 \asymp 1} |R(v_1)|^2 dv_1 \right)^{1/2}&\left( \int_{v_1 \asymp 1} \left|  \tilde{R} \Big( \frac{ m_1 v_1 + m_3}{m_2 v_1} \Big) \right|^4 dv_1  \right)^{1/4}\\
&\qquad \times  \left( \int_{v_1 \asymp 1} \left|  \tilde{R}  \Big( \frac{m_1 v_1 + m_3}{m_2} \Big) \right|^4 dv_1   \right)^{1/4}. 
\end{align*}

In the second integral we change of variables to $u = \frac{m_1 v_1 + m_3}{m_2 v_1}$ with Jacobian factor $\asymp 1$.   In the third integral change of variables to $u = \frac{m_1 v_1 + m_3}{m_2}$ with a Jacobian factor of norm $\asymp  M / M_1$. Since $\tilde{R}$ is supported on $u\asymp 1$ (from the support of $\tilde{\psi}_2$), we obtain

\[S_3 \ll N^2 M^2  \left( \int_{v_1 \asymp 1} |R(v_1)|^2 dv_1 \right)^{1/2} \left( \int_{u \asymp 1} |\tilde{R} (u) |^4 du  \right)^{1/2}  \]

Using $M \lessapprox T/N$ and Lemmas \ref{RL2} and \ref{RL4},  we find

\[ S_3 \lessapprox T^2 |W|^{1/2} E(W)^{1/2} .\qedhere \]

\end{proof}

%
%

When $E(W)\approx |W|^2$,  the bound from Proposition \ref{prpstnsimpleS3} is the best bound for $S_3$ we know how to prove and corresponds to square-root cancellation in the $R$ function.  For larger $E(W)$, however, we can improve the bound for $S_3$.   Let us indicate the general direction here,  and then we will develop the tool we need in the next section.

Ignoring some technical smoothing,  we morally have

\[
S_3 \lessapprox  \sum_{\substack{|m_1| \sim M_1\\  |m_2|,  |m_3| \asymp M}}  \frac{N^2}{M} \int_{v_1 \asymp 1}  \Big| R ( v_1 ) R \Big( \frac{ m_1 v_1 + m_3}{m_2 v_1} \Big) R \Big( \frac{m_1 v_1 + m_3}{m_2} \Big) \Big| dv_1.  
 \]

We can split up the sum over $\mathbf{m}$ and integral over $v_1$ into pieces where the first $R$ factor has size $\sim A_1$,  the second $R$ factor has size $\sim A_2$, and the third $R$ factor has size $\sim A_3$.   For simplicity,  suppose that $A_1 = A_2 = A_3 = A$, which we expect to be the critical case, and focus on the value of $A$ that dominates the integral.  If $A \approx |W|^{1/2}$,  then we get the bound corresponding to minimal energy.    If $A$ is larger,  then $|R(v)| \sim A$ for only a small subset $U_A \subset \{ v \asymp 1 \}$ by Lemma \ref{RL2}. Our splitting of the summation and integration would mean we have the conditions $v_1\in U_A$, $(m_1v_1+m_3)/(m_2v_1)\in U_A$ and $(m_1v_1+m_3)/m_2\in U_A$ and typically we would think it to be rare for these three sparse conditions to simultaneously hold.  If the simple analysis in Proposition \ref{prpstnsimpleS3} was sharp,  it would mean that for many $v \in U_A$ and many $m_1, m_2, m_3$,  we have $\frac{m_1 v + m_3}{m_2} \in U_A$.   We will see that a small set $U$ cannot be approximately invariant under this large set of affine transformations.   In the next section, we will prove a precise estimate in this spirit, and then we will use it to give stronger bounds for $S_3$ when the energy is greater than $|W|^2$.

%
%
%
%

\section{Summing over affine transformations}\label{sec:Affine}

Given $M>0$ and a compactly supported smooth function $f$, we define

\[
J(f):= \sup_{0<M_1,M_2,M_3<M}\int \Big( \sum_{|m_1| \sim M_1,  m_2 \sim M_2 , |m_3| \ll M_3 }  f \Big( \frac{m_1 u + m_3}{m_2} \Big)  \Big)^2 du,
\]

which is an average of sums of affine transformations of $f$. The aim of this section is to establish the following general bound for $J(f)$.

%
%

\begin{prpstn}[Equidistribution over affine transformations] \label{propsumaff} Suppose that $f(u) $ is non-negative and supported on $u \asymp 1$ and that $|\widehat{f}(\xi)|\lessapprox_j (T/|\xi|)^j\sup_u |f(u)|$ for all $j\in \mathbb{Z}_{\ge 0}$.  Then 

\[ J(f) \lessapprox M^6 \Big( \int f(u) du \Big)^2 + M^4 \int f(u)^2 du.\]

\end{prpstn}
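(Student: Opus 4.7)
The plan is to Fourier-analyze $S(u) := \sum_{m_1, m_2, m_3} f((m_1 u + m_3)/m_2)$, for which $J(f) = \int S(u)^2\,du$, via Poisson summation in $m_3$. Since $f$ is supported on $v \asymp 1$, for each fixed $(u, m_1, m_2)$ the summand is nonzero only for $m_3$ in an interval of length $\asymp m_2$. Because $f \ge 0$, replacing the sharp cutoff $|m_3| \le M_3$ by a smooth majorant of scale $\max(M_2, M_3)$ only increases $S$; after this reduction, Poisson summation in $m_3$ yields, up to negligible errors,
\[
\sum_{m_3} f\Bigl(\tfrac{m_1 u + m_3}{m_2}\Bigr) \;=\; m_2 \sum_{k \in \mathbb{Z}} e(k m_1 u)\,\hat{f}(k m_2),
\]
with the Fourier decay hypothesis on $f$ confining the effective range to $|k| \lesssim T/m_2$.

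I would then decompose $S = S_0 + S_1$ into the $k = 0$ (mean) and $k \neq 0$ (oscillatory) contributions. The mean piece $S_0$ is essentially constant in $u$ of size $\lesssim M_1 M_2^2 \int f$, while the $u$-support of $S$ has length $L_0 \lesssim M/M_1$. Thus
\[
\int S_0^2\,du \;\lesssim\; \Bigl(M_1 M_2^2 \textstyle\int f\Bigr)^2 \cdot (M/M_1) \;=\; M_1 M_2^4 M \Bigl(\textstyle\int f\Bigr)^2 \;\le\; M^6 \Bigl(\textstyle\int f\Bigr)^2,
\]
which matches the first term of the claimed bound.

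For the oscillatory part I group by the frequency $L = k m_1$ to write $S_1(u) = \sum_L a_L e(Lu)$, where $a_L = \sum_{k m_1 = L,\, k \neq 0,\, m_1 \sim M_1} F(k)$ and $F(k) = \sum_{m_2 \sim M_2} m_2 \hat{f}(k m_2)$. Multiplying by a smooth cutoff $\chi$ adapted to the $u$-support and applying Young's convolution inequality gives $\int S_1^2\chi\,du \lesssim L_0 \sum_L |a_L|^2$. A divisor bound on the number of factorizations $k m_1 = L$ yields $\sum_L |a_L|^2 \lessapprox M_1 \sum_\ell |F(\ell)|^2$. Expanding $|F(\ell)|^2$, applying Cauchy--Schwarz in $(m_2, m_2')$, and invoking the Plancherel-type bound
\[
\sum_\ell |\hat{f}(\ell m_2)|^2 \;\lesssim\; \tfrac{1}{m_2} \int f^2 + \Bigl(\textstyle\int f\Bigr)^2
\]
(which one derives by Poisson summation applied to the autocorrelation $f * f^{-}$ together with the Fourier decay hypothesis on $f$) yields $\sum_\ell |F(\ell)|^2 \lesssim M_2^3 \int f^2 + M_2^4 (\int f)^2$. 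Chaining through, $\int S_1^2 \chi \lesssim M^4 \int f^2 + M^5 (\int f)^2$, which combined with the $S_0$ estimate (with Cauchy--Schwarz for the cross term) gives the proposition.

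The principal technical hurdle I foresee is handling the Poisson summation when the cutoff $|m_3| \le M_3$ is genuinely restrictive, i.e.\ when $M_3 \ll M_2$. A naive Poisson in this regime produces Fourier coefficients with spurious $u$-dependence coming from the sharp cutoff. The nonnegativity of $f$ is what makes this clean: one replaces the sharp cutoff with a smooth majorant of larger scale, recovering the clean Poisson expansion above, and this is the key technical step that enables the rest of the argument to go through uniformly in $M_1, M_2, M_3$.
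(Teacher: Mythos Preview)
Your overall architecture is sound and closely parallels the paper's Fourier-side argument: Poisson summation in $m_3$, separation of the zero frequency (giving the $M^6(\int f)^2$ term), and analysis of the nonzero frequencies grouped by $L=km_1$. The gap is in your ``Plancherel-type bound''
\[
\sum_{\ell} |\hat f(\ell m_2)|^2 \;\lesssim\; \tfrac{1}{m_2}\int f^2 + \Bigl(\int f\Bigr)^2,
\]
which is false in general. Take $f(u)=\sum_{j\sim m_2}\phi(T(u-j/m_2))$ for a fixed $m_2\sim M_2$ (a comb of $\asymp m_2$ bumps of width $1/T$, so $f$ satisfies all hypotheses). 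Then $\hat f(\ell m_2)\asymp m_2/T$ for $|\ell|\lesssim T/m_2$, so the left side is $\asymp m_2/T$; but $\int f^2\asymp m_2/T$ and $(\int f)^2\asymp m_2^2/T^2$, so your right side is $\asymp 1/T+m_2^2/T^2$, smaller by a factor $\asymp m_2$ when $m_2\ll T^{1/2}$. The Poisson/autocorrelation derivation you sketch only gives $\sum_\ell|\hat f(\ell m_2)|^2=\tfrac{1}{m_2}\sum_n g(n/m_2)$ with $g=f*f^{-}$; the terms $n\ne 0$ need not be dominated by $m_2\int g$ precisely because $g$ can concentrate on the lattice $\tfrac{1}{m_2}\mathbb{Z}$.

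If instead you use the (true) bound $\sum_\ell|\hat f(\ell m_2)|^2\lesssim \int f^2$, your chain yields only $J(f)\lessapprox M^5\int f^2$, a factor of $M$ short. The fix is exactly the mechanism the paper uses: computing $\sum_{k\ne 0}|F(k)|^2$ without the lossy Cauchy--Schwarz in $m_2$, one finds (via Poisson in $k$) that it equals $\sum_{m_2,m_2',j} m_2\int f(v)\,f\bigl(\tfrac{m_2 v - j}{m_2'}\bigr)\,dv$, which after Cauchy--Schwarz against $f$ produces $(M^4\int f^2)^{1/2}\,J(\tilde f)^{1/2}$ rather than a closed bound. The paper then closes this recursion by a short downward induction on the $T^\epsilon$-loss. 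In other words, the sum over nonzero frequencies does not decouple from $J$ in one step; the self-similar structure forces an iteration, and that iteration is the missing ingredient in your argument.
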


%
%

To avoid any possible confusion over the dependencies of the implied constants in the statement above, we emphasize that this is saying that if $f:\mathbb{R}\rightarrow [0,\infty)$ and $T\in \mathbb{R}$ have the property that there is a constant $C>0$ such that $f$ is supported on $[1/C,C]$ and the property that for every $\epsilon>0$ and $j\in\mathbb{Z}_{\ge 0}$ there is a constant $c(j,\epsilon)$ such that $|\widehat{f}(\xi)|\le c(j,\epsilon)T^\epsilon (T/|\xi|)^j\|f\|_\infty$ (for all $\xi \in \mathbb{R}$), then we can conclude that for every $\delta>0$ there is a constant $c'(\delta)>0$, depending only on $C$ and all the constants $c(j,\epsilon)$, such that

\[
J(f)\le c'(\delta)T^\delta(M^6\|f\|_1^2+M^4\|f\|_2^2).
\]

We emphasize that in this section all implied constants from our $\ll$ and $\lessapprox$ notation may depend on the implied constants $C$ and $c(j,\epsilon)$ from the assumptions on $f$.

%
%

\hfill\\

\begin{rmk}
By a simple Cauchy-Schwarz argument, we can bound the left hand side by $M^6 \int |f(u)|^2 du$.  This bound is tight if $f(u)$ is a smooth bump on $u \sim 1$.  The Proposition improves on this bound when $f(u)$ is sparse.  To get a sense of what it means, it is good to imagine the example that $f = \chi_U$ where $U$ is a union of (smoothed) $1/T$-intervals.  The proposition says that if $|U|$ is much smaller than $1$, then the sets $\{ \frac{m_1 U + m_3}{m_2} \}_{m_1, m_2, m_3 \sim M}$ cannot overlap too much.  

There are two key examples which correspond to the two terms on the right-hand side.  If $U$ is a random subset, then the left-hand side is comparable to $M^6 \left( \int f(u) du \right)^2$.  If $U$ is the $1/T$-neighborhood of the set of rational numbers $r/s$ with $r, s \sim B$ (for some $B<T^{1/2}$), then the left-hand side is comparable to $M^4 \int |f(u)|^2 du$. Indeed, if $B>M$ and $u=r/s+O(1/T)$ where $r,s\sim B$ and $s=d e$ with $d\sim M$, then we see from restricting to the terms with $m_1=d$ that
\[
\sum_{m_1,m_2,m_3\sim M}f\Bigl(\frac{m_1u+m_2}{m_3}\Bigr)\ge \sum_{m_2,m_3\sim M}f\Bigl(\frac{r+em_2}{em_3}+O(|u-r/s|)\Bigr)\gg M^2
\]
whenever $|u-r/s|\le c/T$ for a sufficiently small constant $c$, since $(r+em_2)/(em_3)$ is also a rational with numerator and denominator of size $B$. Thus the integrand of $J(f)$ is of size $M^4$ on a set of measure $\gtrapprox B^2/T\asymp \|f\|^2_2$. Similarly, if $B\le M$ and $u=r/s+O(1/T)$ we can restrict the summand to $s|m_3$ and $\gcd(m_1,rm_3/s+m_2)\sim M/B$ and achieve the same bound.
\end{rmk}

%
%

The following lemma is the main technical result used to prove Proposition \ref{propsumaff}, which is based on a fairly long Fourier analytic argument.

\begin{lmm}[Iterative bound for $J(f)$] \label{lmm:JIteration}Let $f$ be as in Proposition \ref{propsumaff}. Then there is a bump function $\psi(x)$ supported on $|x|\lessapprox 1$ such that

\[J(f)\lessapprox M^6 \left( \int f(u) du \right)^2 + \Bigl(M^4 \int f(u)^2 du\Bigr)^{1/2}J(\tilde{f})^{1/2},\]

where $\tilde{f}$ is defined in terms of $\psi(x)$ by

\[
\tilde{f}(u) := T \int  \psi( T(u - u') ) f(u') du'.
\]

\end{lmm}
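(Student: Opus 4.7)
My plan is to apply Poisson summation in the innermost variable $m_3$ to decompose the sum defining $F(u) = \sum_{\vec m} f((m_1 u + m_3)/m_2)$ (with smooth cutoffs $\phi_1(m_1/M_1)$, $\phi_2(m_2/M_2)$, $\phi_3(m_3/M_3)$) into a main term $F_0$, arising from the $k = 0$ Fourier mode, and an oscillating remainder $F_{\neq 0}$ from $k \neq 0$. After the substitution $z = (m_1 u + m_3)/m_2$ and Poisson summation,
\[
\sum_{m_3} \phi_3(m_3/M_3)\, f\Big(\tfrac{m_1 u + m_3}{m_2}\Big) = m_2 \sum_{k \in \ZZ} e(k m_1 u) \int \phi_3\Big(\tfrac{m_2 z - m_1 u}{M_3}\Big) f(z)\, e(-k m_2 z)\, dz.
\]
The $k = 0$ contribution $F_0(u)$ has pointwise size $\lessapprox M_1 M_2^2 \int f$ on a $u$-support of length $\lesssim M_2/M_1$; hence $\int F_0^2\,du \lessapprox M_1 M_2^5 (\int f)^2 \le M^6 (\int f)^2$, which accounts for the first term of the claimed bound.

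For the oscillating part $F_{\neq 0}$, the decay hypothesis on $\hat f$ restricts $|k| \lessapprox T/M_2$, and up to lower-order terms from the inner cutoff we have $F_{\neq 0}(u) \approx \sum_{m_1, m_2, k \neq 0} m_2\, \hat f(k m_2)\, e(k m_1 u)$. Multiplying by a smooth $\eta(u)$ of width $\asymp 1$ and applying Plancherel converts the estimate to a sum over frequency pairs $(k m_1, k' m_1')$, weighted by $\widehat{|\eta|^2}(k' m_1' - k m_1)$ and concentrating the mass near the diagonal $k m_1 = k' m_1'$. The divisor bound trivialises this near-diagonal condition, reducing the problem to estimating $M_1 \sum_{k \neq 0} \Big|\sum_{m_2} m_2\, \hat f(k m_2)\Big|^2$.

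To convert this into the claimed geometric-mean form, I plan to apply Cauchy-Schwarz so that one factor yields $M^4 \int f^2$ while the other factor matches $J(\tilde f)$. The $M^4 \int f^2$ side comes from Parseval via $\sum_{k, m_2} m_2^2 |\hat f(k m_2)|^2 \lessapprox M_2^3 \|f\|_2^2$. The $J(\tilde f)$ side is extracted by a further Poisson summation in $m_2$: the expression $\sum_{m_2} m_2 \hat f(k m_2)$ reorganises into $f$ sampled on the lattice $\{\ell/k\}_{\ell \in \ZZ}$, and when averaged against the $m_1$ variable and the Fourier cutoff $\hat\eta$ it produces a sum whose structure is precisely the $J$-functional applied to the smoothed function $\tilde f = f * (T\psi(T\cdot))$. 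The smoothing scale $1/T$ matches exactly the Fourier-space width of $\hat\eta$, which is what makes this identification possible.

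The main obstacle will be this final identification. A naive Cauchy-Schwarz on the inner sum gives only $|\sum_{m_2} m_2 \hat f(k m_2)|^2 \le M_2 \sum_{m_2} m_2^2 |\hat f(k m_2)|^2$, losing a factor of $M_2$ and yielding $\int F_{\neq 0}^2 \lessapprox M^5 \int f^2$, one power of $M$ too large. Recovering the missing power requires extracting cancellation from $\sum_{m_2} m_2 \hat f(k m_2)$ that can only be identified through the geometry of the $J$-functional itself, which is why the bound must appear in bootstrap-friendly geometric-mean form with $J(\tilde f)^{1/2}$ on the right-hand side rather than merely the weaker trivial bound $M^4 \int \tilde f^2$.
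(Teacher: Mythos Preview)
Your overall architecture matches the paper's proof: Poisson summation in $m_3$, separation of the zero mode (giving $M^6(\int f)^2$) from the nonzero modes, and then for the nonzero modes a Plancherel/divisor-bound reduction followed by a second Poisson summation and Cauchy--Schwarz to produce the bootstrap term $(M^4\int f^2)^{1/2}J(\tilde f)^{1/2}$. The paper carries this out on the Fourier side (computing $\hat g(\xi)$ directly), while you work in physical space; the two are equivalent.

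However, there is a genuine gap in your final step. Once you have reduced to $M_1\sum_{k\neq 0}\bigl|\sum_{m_2}m_2\hat f(km_2)\bigr|^2$, you propose Poisson summation in $m_2$, and then claim that ``averaged against the $m_1$ variable and the Fourier cutoff $\hat\eta$'' this yields the $J(\tilde f)$ structure. But the $m_1$ variable is no longer present: you already consumed it via the divisor bound to produce the prefactor $M_1$. Poisson in $m_2$ indeed gives $\sum_{m_2}m_2\hat f(km_2)\approx (M_2/k)\sum_\ell f(\ell/k)$, but the resulting expression $M_1\sum_k(M_2/k)^2\bigl|\sum_\ell f(\ell/k)\bigr|^2$ has no $u$-integral and no triple $(m_1',m_2',m_3')$ sum, so it cannot be identified with $J(\tilde f)$.

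The correct move --- and this is what the paper does --- is to expand the square, write each $\hat f$ as a spatial integral, and apply Poisson summation in $k$ (not $m_2$). Concretely,
\[
M_1\sum_{k}\sum_{m_2,m_2'}m_2m_2'\iint f(u)f(u')\,e\bigl(-k(m_2u-m_2'u')\bigr)\,du\,du',
\]
and Poisson in $k$ (with its cutoff $|k|\lessapprox T/M_2$) forces $|m_2u-m_2'u'-j|\lessapprox 1/T$ for some integer $j$. This reintroduces the spatial variable $u$, identifies the $u'$-integral as $\tilde f\bigl((m_2u-j)/m_2'\bigr)$ at smoothing scale $1/T$, and yields $\lessapprox M^2\int f(u)\sum_{m_2,m_2',j}\tilde f\bigl((m_2u-j)/m_2'\bigr)\,du$. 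Cauchy--Schwarz in $u$ then gives $(M^4\int f^2)^{1/2}J(\tilde f)^{1/2}$ exactly. So the fix is simply to swap which variable receives the second Poisson summation; with that correction your argument goes through and is essentially identical to the paper's. (A secondary point: the ``lower-order terms from the inner cutoff $\phi_3$'' that you dismiss are tracked in the paper via a short $\tau$-integral and contribute the factor $1+M_1/M_3$ in the divisor count; this is harmless but should be handled rather than dropped.)
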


%
%

As above, the bump function $\psi(x)$ may depend on the implied constants in the assumptions on $f$. If $f$ satisfies $\widehat{f}(\xi)\le c(j,\epsilon)T^\epsilon (T/|\xi|)^j\sup_u f(u)$ and is supported on $[1/C,C]$, then Lemma \ref{lmm:JIteration} claims that there is a non-negative 1-bounded function $\psi(x)$ such that for all $\delta>0$ and $j\in \mathbb{Z}_{\ge 0}$ there are constants $c'(\delta)$ and $c'(j,\delta)$ (depending only on $C$ and the constants $c(j,\epsilon)$) such that $\psi(x)$ is supported on $|x|\le c'(\delta)T^\delta$, $|\psi^{(j)}(\xi)|\le c'(j,\delta)T^\delta$ and such that

\[
J(f)\le c'(\delta)T^\delta (M^6 \|f\|_1^2+M^2\|f\|_2J(\tilde{f})^{1/2}).
\]

Before we prove Lemma \ref{lmm:JIteration}, we first show how to deduce Proposition \ref{propsumaff} from it.

\begin{proof}[Proof of Proposition \ref{propsumaff} assuming Lemma \ref{lmm:JIteration}]
We wish to show that for any $\epsilon>0$ there is a $C(\epsilon)>0$ such that

\begin{equation}
J(f)\le C(\epsilon)T^\epsilon \Bigl( M^6 \Big( \int f(u) du \Big)^2 + M^4 \int f(u)^2 du\Bigr),
\label{eq:JTarget}
\end{equation}

for any function $f$ satisfying the assumptions of Proposition \ref{propsumaff} (with the constant $C(\epsilon)$ depending only on $\epsilon$ and the implied constants of the assumptions on $f$). 

We wish to prove this by downwards induction on $\epsilon$. As a base case, the result clearly holds for $\epsilon=100$. By induction, we can assume that \eqref{eq:JTarget} holds with $3\epsilon/2$ in place of $\epsilon$ and seek to establish \eqref{eq:JTarget}. We first apply Lemma \ref{lmm:JIteration} to give

\[
J(f)\lessapprox M^6 \Big( \int f(u) du \Big)^2 + \Bigl(M^4 \int f(u)^2 du\Bigr)^{1/2}J(\tilde{f})^{1/2}.
\]

Since $f(u)$ is supported on $u\asymp 1$ and $\psi(x)$ is supported on $x\lessapprox 1$, we see that $\tilde{f}(u)$ is also supported on $u\asymp 1$. Moreover, $\tilde{f}$ has Fourier transform $\widehat{f}(\xi)\widehat{\psi}(\xi/T)$, so also satisfies the rapid decay assumption of Proposition \ref{propsumaff}. Thus $\tilde{f}$ satisfies the assumptions of Proposition \ref{propsumaff} (with implied constants depending only on the implied constants from the assumptions on $f$) and so we may apply the induction hypothesis to $J(\tilde{f})$, which shows

\[
J(\tilde{f})\ll_\epsilon T^{3\epsilon/2}\Bigl( M^6 \Big( \int \tilde{f}(u)  du\Big)^2 + M^4 \int \tilde{f}(u)^2 du\Bigr).
\]

Since $\tilde{f}$ is a smoothed version of $f$, we can bound $\int \tilde{f}(u) du \lessapprox \int f(u) du$ and $\int \tilde{f}(u)^2 du \lessapprox \int f(u)^2 du$. Substituting these bounds back into our bound for $J(f)$, we find

\begin{align*}
 J(f)&\lessapprox_\epsilon T^{3\epsilon/4}\Biggl(M^6 \Big( \int f(u)  du\Big)^2+M^4 \int f(u)^2 du\Biggr).
 \end{align*}
 
Thus \eqref{eq:JTarget} holds for $C(\epsilon)$ sufficiently large, which completes the induction.
\end{proof}

%
%

We now return to the proof of Lemma \ref{lmm:JIteration}.

\begin{proof}[Proof of  Lemma \ref{lmm:JIteration}]
The most interesting situation is when $M_1 = M_2 = M_3 = M$.  We encourage the reader to keep this case in mind on first reading. 

By rescaling we may assume that $\sup_u f(u)\asymp 1$. Crudely, the rapid decay of $\widehat{f}$ implies that $f'(u)\lessapprox T^2$, so $\int f(u)du\gtrapprox 1/T^2$ and
\begin{align*}
\sum_{m_3\in\mathbb{Z} }f\Bigl(\frac{m_1v+m_3}{m_2}\Bigr)&=\sum_{\substack{m_3\\ |m_1v+m_3|\asymp m_2}}\Bigl(\int_{m_3}^{m_3+1}f\Bigl(\frac{m_1v+u}{m_2}\Bigr)du+O\Bigl(\frac{\|f'\|_\infty}{m_2}\Bigr)\Bigr)\\
 &\lessapprox M\int f(u)du+T^2,
\end{align*}
which implies the result if $M>T^4$. Thus we may assume that $M\le T^{4}$. We may assume that all the implied constants in the assumptions on $f$ are sufficiently small compared with $T$ since otherwise the result is trivial. Moreover, since $\int f(u)du\gtrapprox T^{-2}$ and $M\le T^4$, any $O(T^{-100})$ additive factors are negligible. 

We let $\psi_1(x)$ be a smooth bump supported on $|x| \ll 1$ so that $\psi_1(m_3/M_3)$ majorizes the summation condition $|m_3|\ll M_3$.   Thus we can bound the inner sum in $J(f)$ by

\[
 g(u) :=  \sum_{|m_1| \sim M_1, m_2 \sim M_2} \sum_{m_3 }  \psi_1\Bigl(\frac{m_3}{M_3}\Bigr) f \Big( \frac{m_1 u + m_3}{m_2}  \Big). 
 \]

Squaring and integrating over $u$, and then applying Plancherel gives (for the choice of $M_1,M_2,M_3$ achieving the supremum)

\begin{equation}
J(f)\le \int |g(u)|^2 du=\int |\widehat{g}(\xi)|^2 d\xi.
 \label{eq:FourierInt}
\end{equation}

We wish to estimate $\widehat{g}(\xi)$.  We have

\[ \widehat{g}(\xi) =   \sum_{|m_1| \sim M_1, m_2 \sim M_2} \int \sum_{m_3 } \psi_1\Bigl(\frac{m_3}{M_3}\Bigr) f \Big( \frac{m_1 u + m_3}{m_2}  \Big) e(- \xi u) du.  \]

We do a change of variables: $\tilde{u} = u  + \frac{m_3}{m_1}$, so that $f( \frac{m_1 u + m_3}{m_2} ) = f( \frac{m_1 \tilde{u}}{m_2})$.  In the new variables, we get

\[ \widehat{g}(\xi) =   \sum_{|m_1| \sim M_1, m_2 \sim M_2} \left( \int f \Big( \frac{m_1 \tilde{u}}{m_2}  \Big) e(- \xi \tilde{u}) d \tilde{u}\right)  \left( \sum_{m_3 \in \mathbb{Z}}   \psi_1\Bigl(\frac{m_3}{M_3}\Bigr)  e\Bigl(\frac{m_3}{m_1} \xi\Bigr) \right).  \]

The  integral in parentheses is $\frac{m_2}{m_1} \widehat{f}( \frac{m_2}{m_1} \xi )$.   The first key point in our analysis is that we can explicitly do the last sum by  Poisson summation.  It is equal to $M_3 \sum_{\ell} \widehat{\psi}_1( M_3 (\ell - \frac{\xi}{m_1}))$.    So all together we have

\[
 \widehat{g}(\xi) = \sum_{|m_1| \sim M_1} M_3 \sum_{\ell } \widehat{\psi}_1 \Bigl(M_3 \Bigl(\ell - \frac{\xi}{m_1}  \Bigr) \Bigr)  \sum_{m_2 \sim M_2}  \frac{m_2}{m_1} \widehat{f} \Big( \frac{m_2}{m_1} \xi \Big). 
 \]

Since $\widehat{\psi}_1$ is rapidly decaying, $\widehat{\psi}_1(M_3(\ell - \frac{\xi}{m_1} ))$ is negligible unless $| \xi - \ell m_1 | \lessapprox \frac{M_1}{M_3}$.  Therefore, we have

\begin{equation} 
| \widehat{g}(\xi) | \le \sum_{|m_1| \sim M_1}\sum_{\ell: |\xi - m_1 \ell| \lessapprox \frac{M_1}{M_3}} M_3  \Big|   \sum_{m_2 \sim M_2} \frac{m_2}{m_1} \widehat{f} \Big( \frac{m_2}{m_1} \xi \Big) \Big| +O(T^{-100}).
\label{hatg} 
\end{equation}

We have to estimate $\int_\RR |\widehat{g}(\xi)|^2 d \xi$.  
For a small $\eta>0$, we break up the domain of integration into the region $|\xi| \le T^\eta M_1/M_3$, the region $T^\eta M_1/M_3<|\xi|\le T^6$ and the remainder: 

\begin{equation}
 \int_\RR |\widehat{g}(\xi)|^2 d \xi =  \underbrace{ \int_{|\xi| \le T^\eta \frac{M_1}{M_3}} |\widehat{g}(\xi)|^2 d \xi}_{I} + \underbrace{\int_{T^\eta \frac{M_1}{M_3}<|\xi| \le T^6} |\widehat{g}(\xi)|^2 d \xi}_{II} + \underbrace{\int_{ T^6<|\xi|} |\widehat{g}(\xi)|^2 d \xi}_{III}. \label{eq:IntegralSplit}
 \end{equation}

If $|\xi|>T^6$, then since $\widehat{f}(\xi)$ is rapidly decaying for $|\xi| > T$ and $M\le T^4$, we see that $\widehat{f}(m_2\xi/m_1)$ is negligible and $|\widehat{g}(\xi)|\ll T^{-100}|\xi|^{-2}$. Thus

\begin{equation}
III=O(T^{-100}).
\label{eq:IIIBound}
\end{equation}

 If $|\xi| \le T^\eta M_1/M_3$, then in \eqref{hatg}, the only terms that contribute have $|\ell| \ll T^\eta/M_3\ll T^\eta$.  Thus, by Cauchy-Schwarz we obtain from \eqref{hatg}

\begin{align*}
 | \widehat{g}(\xi) |^2 &\ll T^{2\eta} M_1 \sum_{|m_1| \sim M_1} M_3^2  \Big|   \sum_{m_2 \sim M_2} \frac{m_2}{m_1} \widehat{f} \Big( \frac{m_2}{m_1} \xi \Big) \Big|^2+O(T^{-100}) \\
&\ll T^{2\eta} M_2^4 M_3^2 \sup_{\xi} |\widehat{f}(\xi)|^2.
\end{align*}

We absorbed the $O(T^{-100})$ error term in our bound since $\widehat{f}(0)\gtrapprox T^{-2}$. Therefore 

\begin{align} 
I = \int_{|\xi| \le T^\eta M_1/M_3} |\widehat{g}(\xi)|^2 d \xi &\ll T^{3\eta} M_1 M_2^4 M_3 \sup_{\xi} |\widehat{f}(\xi)|^2\nonumber \\
& \ll T^{3\eta}M^6 \left( \int f(u) du \right)^2. \label{eq:IBound} 
\end{align}

Now suppose that $ T^\eta M_1/M_3<|\xi|\le T^6$. We return to \eqref{hatg} and consider the number of terms in the outer double sum. Let $s=m_1 \ell$. In this range, $s$ must be a non-zero integer in the $\lessapprox M_1/M_3$ neighborhood of $\xi$ as soon as $T$ is sufficiently large in terms of $\eta$.  The number of such integers $s$ is $\lessapprox 1 + M_1/M_3$.  Since $|\xi| \le T^6$ and $s$ is non-zero, each such integer $s$ has $\lessapprox 1$ factorizations as $s=m_1\ell$.  All together the number of terms in the outer double sum is $\lessapprox  1 + M_1/M_3$.  Therefore, we can use Cauchy-Schwarz in \eqref{hatg} to bound $|\widehat{g}(\xi)|^2$ by 

\[ 
\lessapprox_\eta \left(1 + \frac{M_1}{M_3} \right) \sum_{|m_1| \sim M_1}\sum_{\substack{\ell \\ |\xi - \ell m_1| \lessapprox M_1/M_3}}  M_3^2  \Big|   \sum_{m_2 \sim M_2} \frac{m_2}{m_1} \widehat{f} \Big( \frac{m_2}{m_1} \xi \Big) \Big|^2 +O(T^{-200}).
\]

Therefore the term $II$ is bounded by

\[\lessapprox_\eta  (M_1 M_3 + M_3^2) \sum_{|m_1| \sim M_1}\sum_{\ell}   \int\limits_{|\xi - \ell m_1| \lessapprox M_1/M_3}  \Big|   \sum_{m_2 \sim M_2} \frac{m_2}{m_1} \widehat{f} \Big( \frac{m_2}{m_1} \xi \Big) \Big|^2 d\xi+O(T^{-100}). \]

Morally this integral does not depend on $m_1$, and we can make this precise by changing variables. For each $m_1, \ell$, we write $\xi = \ell m_1 + \frac{m_1}{M_3} \tau $ and do a change of variables to get (extending the range of integration slightly for an upper bound so we have a range independent of $m_1$)

\[ II \lessapprox_\eta (M_1 +  M_3) \sum_{\ell }  \int_{|\tau| \lessapprox 1}  \Big|   \sum_{m_2 \sim M_2} m_2 \widehat{f} \Big( \ell m_2 + \frac{m_2}{M_3} \tau \Big) \Big|^2 d \tau+O(T^{-100}). \]

Since $\widehat{f}(\xi)=O(T^{-200})$ unless $|\xi | \lessapprox T$, we can restrict the sum over $\ell$ to the range $| \ell | \lessapprox T/ M_2$ at the cost of a negligible error.  We introduce a bump  $\psi_2(x)$ supported on $|x| \lessapprox 1$ so that $\psi_2(M_2\ell/T)$ majorizes this summation condition, and bound the last expression by

\[ \lessapprox_\eta (M_1 + M_3)\Sigma_{II}+O(T^{-100}), \]

where

\[ \Sigma_{II}:= \sum_{\ell}  \psi_2 \Big(\frac{M_2 \ell}{T} \Big) \int_{| \tau| \lessapprox 1}   \Big|   \sum_{m_2 \sim M_2}  m_2 \widehat{f} \Big( \ell m_2 + \frac{m_2}{M_1}  \tau \Big) \Big|^2 d  \tau.\]

We write out $\widehat{f}$ as an integral, expand out the square and bring the summation over $\ell$ and integration over $\tau$ on the inside to get

\begin{equation}
 \Sigma_{II}=  \int \int \sum_{m_2, m_2' \sim M_2}m_2 m_2' f(u) f(u') Z_1 Z_2 du' du, \label{eq:SigmaII}
 \end{equation}

where $Z_1=Z_1(m_2,m_2',u,u')$ and $Z_2=Z_2(m_2,m_2',u,u')$ are given by

\begin{align*}
Z_1&:=  \int_{| \tau| \lessapprox 1} e\Bigl(\tau\Bigl( \frac{m_2'}{M_1}u' - \frac{m_2}{M_1} u\Bigr)\Bigr)d\tau ,\\
 Z_2&:= \sum_{\ell}  \psi_2 \Big(\frac{M_2 \ell}{T} \Big) e\Bigl(\ell\Bigl(m_2' u' - m_2 u\Bigr)\Bigr).
\end{align*}

Trivially we have $|Z_1|\lessapprox 1$. By Poisson summation, and the rapid decay of $\widehat{\psi}_2$, we have

\begin{align*}
Z_2&=\frac{T}{M_2}\sum_{j}  \widehat{\psi}_2\Bigl(\frac{j-m_2'u'+m_2u}{M_2/T}\Bigr)\\
&=\frac{T}{M_2}\sum_{\substack{j\\ |j-m_2'u'+m_2u|\lessapprox M_2/T}}  \widehat{\psi}_2\Bigl(\frac{j-m_2'u'+m_2u}{M_2/T}\Bigr)+O(T^{-100}).
\end{align*}

Substituting these back into our expression \eqref{eq:SigmaII} for $\Sigma_{II}$ and recalling $M\le T^4$, we find

\begin{align*}
\Sigma_{II}\lessapprox M_2\int f(u) \sum_{m_2, m_2' \sim M_2}\sum_{j}   \int_{|u' - \frac{m_2 u + j}{m_2'} | \lessapprox \frac{1}{T}} T f(u') du' du+O(T^{-80}).
\end{align*}

This gives
\[ II\lessapprox_\eta M_2 (M_1 + M_3)  \int f(u)  \sum_{m_2, m_2' \sim M_2}\sum_{j}  \int_{|u' - \frac{m_2 u + j}{m_2'} | \lessapprox \frac{1}{T}}  T f(u') du' du+O(T^{-70}). \]

Since $\widehat{f}(\xi)$ rapidly decays for $|\xi|>T$, $f$ is morally almost constant on intervals of length $1/T$. We let $\psi(x)$ be a smooth bump function supported on $x\lessapprox 1$, and then define $\tilde{f}$ to be a slightly smoothed version of $f$ given by 

\[
\tilde{f}(u) := \int T \psi( T(u - u') ) f(u') du',
\]

as in the statement of Lemma \ref{lmm:JIteration}

We can choose $\psi$ such that the integral over $u'$ in the bound for $II$ above is bounded by $\tilde{f} (\frac{m_2u+j}{m_2'})$ and $\tilde{f}(u)$ is supported on $u\asymp 1$ (recall that the range of support of $f$ is assumed to be sufficiently small in terms of $T$). Thus we find (recalling that $f$ is supported on $u\asymp 1$ and $M_1,M_2,M_3\le M$)

\[
 II \lessapprox_\eta  M^2 \int_{u\asymp 1} f(u)  \sum_{m_2, m_2' \sim M_2}\sum_{ j} \tilde{f} \Big( \frac{m_2 u + j}{m_2'} \Big) du+O(T^{-70}).
 \]

Now we apply Cauchy-Schwarz to get

\begin{equation}
II \lessapprox_\eta \Big[ M^4 \int f(u)^2 du \Big]^{1/2} \Big[ \int_{u\asymp 1} \Big( \sum_{m_2, m_2' \sim M_2,  j \in \mathbb{Z}} \tilde{f} \Big( \frac{m_2 u + j}{m_2'} \Big) \Big)^2 du\Big]^{1/2}. \label{eq:IIBound}
\end{equation}

(The $O(T^{-70})$ term it is clearly majorized by the above expression and so may absorbed into the implied constant.) Since $u\asymp 1$ and $f(x)$ is supported on $x\asymp 1$, we may restrict the summation over $j$ to $j\ll M_2$. Thus we see that the second term in square brackets is bounded by $J(\tilde{f})$. Putting together \eqref{eq:FourierInt}, \eqref{eq:IntegralSplit}, \eqref{eq:IIIBound}, \eqref{eq:IBound} and \eqref{eq:IIBound} we find that for any $\eta>0$, provided $T$ is sufficiently large in terms of $\eta$, we have

\begin{align*}
& J(f)\lessapprox_\eta  T^{3\eta}M^6 \left( \int f(u) du \right)^2+ \Big( M^4 \int f(u)^2 du\Big)^{1/2}J(\tilde{f})^{1/2}.
\end{align*}

This now gives the result on letting $\eta\rightarrow 0$ sufficiently slowly with $T$.
\end{proof}

%
%
%
%

\section{Further bounds for \texorpdfstring{$S_3$}{S3}}

In this section,  we use our bounds for sums over affine transformations to improve our bound for $S_3$.   We will get the following estimate.

%
%

\begin{prpstn}[Refined $S_3$ bound] \label{prpstnS3} If $W$ is a $T^\epsilon$-separated set contained in an interval of length $T$,  then

\begin{equation}
S_{3} \lessapprox_\epsilon T^2 |W|^{3/2}+TN|W|^{1/2}E(W)^{1/2}.
\label{eq:S3DirichletBound}
\end{equation}

\end{prpstn}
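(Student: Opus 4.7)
The plan is to combine Proposition \ref{prpstn:S3Expansion} with the affine equidistribution estimate Proposition \ref{propsumaff}, applied with $f = |\tilde R_M|^2$. The key step is to use Cauchy--Schwarz to decouple the two $\tilde R_M$ factors appearing in $\tilde I_m$, so that the dependence on the $m_i$'s through each factor can be analysed separately via Proposition \ref{propsumaff}.

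Starting from Proposition \ref{prpstn:S3Expansion}, there are $M_1 \le M \lessapprox T/N$ such that $S_3 \lessapprox (N^2/M) \sum_m \tilde I_m$, and writing $u = (m_1 v_1 + m_3)/m_2$ we have
\[
\sum_m \tilde I_m = \int_{v_1 \asymp 1} |R(v_1)| \sum_m |\tilde R_M(u/v_1) \tilde R_M(u)|\, dv_1.
\]
I would first apply Cauchy--Schwarz to the sum over $m$ to bound the inner sum by $A(v_1)^{1/2} B(v_1)^{1/2}$, where $A(v_1) := \sum_m |\tilde R_M(u/v_1)|^2$ and $B(v_1) := \sum_m |\tilde R_M(u)|^2$. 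A second Cauchy--Schwarz on the $v_1$-integral, separating off $|R(v_1)|$ and using $\int_{v_1 \asymp 1} |R(v_1)|^2\, dv_1 \lessapprox |W|$ from Lemma \ref{RL2}, then gives
\[
\sum_m \tilde I_m \lessapprox |W|^{1/2} \Bigl( \int A(v_1) B(v_1)\, dv_1 \Bigr)^{1/2}.
\]

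Next I would estimate $\int A B \le (\int A^2)^{1/2} (\int B^2)^{1/2}$ by another Cauchy--Schwarz, and bound each of $\int A^2, \int B^2$ via Proposition \ref{propsumaff} with $f = |\tilde R_M|^2$. The integral $\int B^2$ is already in the form $J(f)$ with the integration variable $v_1$, and Lemmas \ref{RL2} and \ref{RL4} give $\int f \lessapprox |W|$ and $\int f^2 \lessapprox E(W)$, so Proposition \ref{propsumaff} yields $\int B^2 \lessapprox M^6 |W|^2 + M^4 E(W)$. For $\int A^2$, the change of variables $w = 1/v_1$ (with Jacobian $\asymp 1$ since $v_1 \asymp 1$) rewrites $u/v_1 = (m_3 w + m_1)/m_2$ as an affine function of $w$; after relabeling $m_1 \leftrightarrow m_3$ this matches the form of Proposition \ref{propsumaff} (using that $|m_1|, m_3 \lesssim M$) and gives the same bound. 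Combining these estimates,
\[
\sum_m \tilde I_m \lessapprox |W|^{1/2} \bigl(M^6 |W|^2 + M^4 E(W)\bigr)^{1/2} \lessapprox M^3 |W|^{3/2} + M^2 |W|^{1/2} E(W)^{1/2},
\]
and multiplying by $N^2/M$ together with $M \lessapprox T/N$ yields the claimed bound on $S_3$.

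The main technical check I expect to need is the Fourier-decay hypothesis of Proposition \ref{propsumaff} applied to $f = |\tilde R_M|^2$. Since $\tilde R_M^2$ is a convolution of $|R|^2$ against a smooth bump at scale $1/(NM) \ge 1/T$ (restricted to $v \asymp 1$), its Fourier transform is rapidly decaying past scale $T$, so this should hold after an appropriate smoothing. Matching the index ranges $|m_1| \sim M_1 \le M$ and $m_2, m_3 \sim M$ to the form of Proposition \ref{propsumaff} is routine, and the factor $M_1$ does not appear in the final bound because Proposition \ref{propsumaff} is stated uniformly in $M_1, M_2, M_3 \le M$.
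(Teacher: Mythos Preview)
Your proposal is correct and follows essentially the same route as the paper: the paper also applies Cauchy--Schwarz to separate off $|R(v_1)|$ via Lemma \ref{RL2}, then a second Cauchy--Schwarz in $m$ and $v_1$ to decouple the two $\tilde R_M$ factors into the integrals you call $\int A^2$ and $\int B^2$ (the paper's $S_{3,3}$ and $S_{3,4}$), and bounds each by Proposition \ref{propsumaff} with $f=\psi(v)\tilde R_M(v)^2$ after the change of variables $v\mapsto 1/v$ for the first. The only cosmetic difference is the order of the Cauchy--Schwarz steps (you do $m$ then $v_1$, the paper does $v_1$ then $m$ then $v_1$), and the paper inserts an explicit bump $\psi$ to enforce compact support of $f$, which you should also do.
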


%
%

\begin{rmk}
Comparing with Proposition \ref{prpstnsimpleS3},  when $E(W) \approx |W|^2$ both propositions give $S_3 \lessapprox T^2 |W|^{3/2}$.  When $E(W)$ is large,  Proposition \ref{prpstnS3} is better by a factor of $N/T$.  If we look at the critical case when $T = N^{5/4}$ and $\sigma = 3/4$, then this estimate (together with our bounds for $S_2$) gives an improvement to the basic orthogonality estimate (\ref{basicorth}) when $E(W)$ is significantly below $|W|^{3}$.  In other words, this Proposition gives an improvement unless the energy is essentially maximal.  When $E(W)$ is very large, however, we will be able to get good estimates in the next section using Theorem \ref{thrm:HeathBrown}. 
\end{rmk}

\begin{proof}

Recall from Proposition \ref{prpstn:S3Expansion} that $S_3\lessapprox S_{3,0}+O(T^{-100})$, where

\[  
S_{3,0}:=\frac{N^2}{M} \int_{v_1 \asymp 1}  | R ( v_1 ) |   \sum_{\substack{|m_1| \sim M_1\\  |m_2|,  |m_3| \asymp M}} \left| \tilde{R} \Big( \frac{ m_1 v_1 + m_3}{m_2 v_1} \Big) \tilde{R} \Big( \frac{m_1 v_1 + m_3}{m_2} \Big)\right|  dv_1   
\]

for some $1\le M_1\le M\lessapprox T/N$ and some suitable smooth bumps $\tilde{\psi}_1$, $\tilde{\psi}_2$ which appear in the definition \eqref{eq:RtDef} of $\tilde{R}=\tilde{R}_{\tilde{\psi}_1,\tilde{\psi}_2,M}$. By Cauchy-Schwarz we have

\[
S_{3,0}\le \frac{N^2}{M} S_{3,1}^{1/2}S_{3,2}^{1/2},
\]

where (using Lemma \ref{RL2} to bound $S_{3,1}$)

\begin{align*}
S_{3,1}&:=\int_{v \asymp 1}  | R(v)|^2dv\ll_\epsilon |W|,\\
S_{3,2}&:=\int_{v \asymp 1}  \Bigl( \sum_{|m_1| \sim M_1,\,   |m_2|,  |m_3| \asymp M}  \Bigr| \tilde{R}\Bigl(\frac{m_3+m_1v}{m_2v}\Bigr) \tilde{R}\Bigl(\frac{m_3+m_1v}{m_2}\Bigr)\Bigr|\Bigr)^2dv.
\end{align*}

By Cauchy-Schwarz again, we have that

\[
S_{3,2}\ll S_{3,3}^{1/2}S_{3,4}^{1/2},
\]

where 

\begin{align*}
S_{3,3}&:=\int_{v \asymp 1}  \Bigl(  \sum_{|m_1| \sim M_1, \,  |m_2|,  |m_3| \asymp M} \Bigr|\tilde{R}\Bigl(\frac{m_3+m_1v}{m_2v}\Bigr)\Bigr|^2\Bigr)^2dv,\\
S_{3,4}&:=\int_{v \asymp 1}  \Bigl(  \sum_{|m_1| \sim M_1, \,  |m_2|,  |m_3| \asymp M} \Bigr|\tilde{R}\Bigl(\frac{m_3+m_1v}{m_2}\Bigr)\Bigr|^2\Bigr)^2dv.
\end{align*}

We bound $S_{3,3}$ and $S_{3,4}$ using Proposition \ref{propsumaff}.  To bound $S_{3,4}$, we use $f(v) =  \psi_1(v) |\tilde{R}(v)|^2$, where $\psi_1(v)$ is a smooth bump supported on $v \asymp 1$ taking a maximal value of 1 at $v=1$ which majorizes the range of integration.   To control $S_{3,3}$, we make a change of variables $u=1/v$ and rewrite $S_{3,3}$ as 

\begin{align*}
S_{3,3}&\ll \int_{u \asymp 1} \Bigl( \sum_{|m_1| \sim M_1, \,  |m_2|,  |m_3| \asymp M} \Bigr|\tilde{R}\Bigl(\frac{m_3u+m_1}{m_2}\Bigr)\Bigr|^2\Bigr)^2du.
\end{align*}

Then we use $f(u) = \psi_1(u) |\tilde{R}(u)|^2$ again.  Lemma \ref{lmm:SmoothedFourierDecay} shows that $\widehat{f}(\xi)\lessapprox_j \| f\|_\infty T^j/|\xi|^j $ (taking `$\psi_2$' to be $\widetilde{\psi}_2$,  `$\psi_3$' to be $\widetilde{\psi}_1$ and `$B$' to be $MN$), and so $f$ satisfies the Fourier decay conditions of Proposition \ref{propsumaff}. Moreover, $f$ is clearly has support on $u\asymp 1$ from the support of $\psi_1$. Thus Proposition \ref{propsumaff} gives the bounds

\begin{align*}
 S_{3,3}, \, S_{3,4}
 &\lessapprox M^6 \Big( \int_{v \asymp 1} |\tilde{R}(v)|^2 dv \Big)^2 + M^4 \int_{v \asymp 1} |\tilde{R}(v)|^4 dv .
 \end{align*}

Applying Lemmas \ref{RL2} and \ref{RL4}, we get

\[ S_{3,3}, \,S_{3,4}  \lessapprox_\epsilon M^6 |W|^2 + M^4 E(W). \]

The same bound holds for $S_{3,2}$ since $S_{3,2} \le S_{3,3}^{1/2} S_{3,4}^{1/2}$.    Then we get

\begin{align*}
 S_{3,0} \lessapprox \frac{N^2}{M} S_{3,1}^{1/2} S_{3,2}^{1/2} &\lessapprox_\epsilon \frac{N^2}{M} |W|^{1/2} (M^6 |W|^2 + M^4 E(W))^{1/2} \\
 &= N^2 M^2 |W|^{3/2} + N^2 M |W|^{1/2} E(W)^{1/2}. 
 \end{align*}

Since $M \lessapprox T/N$ and $S_3\lessapprox S_{3,0}+O(T^{-100})$, we get

\[ S_3 \lessapprox_\epsilon T^2 |W|^{3/2} + T N  |W|^{1/2} E(W)^{1/2}. \qedhere\]

\end{proof}

%
%
%
%

\section{Energy Bound}\label{sec:Energy}

In this section, we prove bounds related to the energy of $W$,  which show that a Dirichlet polynomial cannot be too large on a set of large energy. These bounds ultimately rely on Heath-Brown's bound Theorem \ref{thrm:HeathBrown}, and are closely related to (and refine) arguments in \cite{HB2}. Recall that in equation \eqref{eq:EnergyDef}, we defined the energy of a finite set $W \subset \mathbb{R}$ by

\[
E(W) := \#\{t_1,t_2,t_3,t_4\in W:\,|t_1+t_2-t_3-t_4|\le 1\}.
\]

We will prove two bounds about the behavior of Dirichlet polynomials on sets of high energy.  The first bound is Lemma \ref{lmm:BasicEnergy}.  We recall the statement here.

\begin{lmm*}
Let $N\in [T^{2/3},T]$, $\sigma > 1/2$ and $D(t) = \sum_{n\sim N} b_n n^{i t }$ with $|b_n| \le 1$. Suppose $W \subset [0,T] $ is a 1-separated set such that $|D(t)| > N^\sigma$ for $t\in W$. Then

\[
E(W)\le |W|^3 N^{1-2\sigma+o(1)}+|W|^2 N^{2-2\sigma+o(1)}.
\]

\end{lmm*}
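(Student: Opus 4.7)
The strategy is to combine the pointwise hypothesis $|D(t)|\ge N^\sigma$ on $W$ with Heath-Brown's Theorem \ref{thrm:HeathBrown} applied to $D$ at pairwise differences of $W$. The rough form of the target bound, $|W|^3 N^{1-2\sigma}+|W|^2 N^{2-2\sigma}$, suggests that it should arise as
\[
E(W)\lessapprox \frac{|W|}{N^{2\sigma}}\cdot \Bigl(|W|^2N+|W|N^2+|W|^{5/4}T^{1/2}N\Bigr),
\]
i.e. as $|W|$ times Heath-Brown's bound, divided by $N^{2\sigma}$. So my plan has three main steps.

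\emph{Step 1 (Heath-Brown input).} Apply Theorem \ref{thrm:HeathBrown} with $\mathcal{T}=W$ and $a_n=b_n$. Expanding and interchanging sums gives
\[
\sum_{t,t'\in W} |D(t-t')|^2 \;=\; \sum_{\delta} r_W(\delta)\,|D(\delta)|^2 \;\lessapprox\; |W|^2N+|W|N^2+|W|^{5/4}T^{1/2}N,
\]
where $r_W(\delta):=\#\{(t,t')\in W^2:\,t-t'\approx \delta\}$. By the 1-separation of $W$ we also have $E(W)\lessapprox \sum_\delta r_W(\delta)^2$.

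\emph{Step 2 (Key inequality, the main obstacle).} Show
\[
E(W)\,N^{2\sigma}\;\lessapprox\; |W|\sum_{t,t'\in W}|D(t-t')|^2.
\]
This is the heart of the argument. The idea is a Halász–Montgomery style averaging: for each $\delta$ with $r_W(\delta)>0$, the hypothesis $|D(t)|\ge N^\sigma$ at all $r_W(\delta)$ endpoints in $W\cap(W+\delta)$ should, after choosing unimodular phases and applying Cauchy–Schwarz, yield an effective lower bound $|D(\delta)|^2 r_W(\delta)\gtrapprox N^{2\sigma} r_W(\delta)^2/|W|$. Summing in $\delta$ then gives the displayed inequality. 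Concretely, one introduces $\alpha_{t,t'}=\overline{D(t)\overline{D(t')}}/|D(t)D(t')|$ so that $\sum_{(t,t')\in W\cap(W+\delta)}\alpha_{t,t'}D(t)\overline{D(t')}=\sum_{(t,t')}|D(t)||D(t')|\ge r_W(\delta) N^{2\sigma}$, and then expands this as a sum over $m$ of quantities dual to $D(\delta)$; the factor of $1/|W|$ is what a final Cauchy–Schwarz costs.

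\emph{Step 3 (Combine and absorb the tail).} Substituting Step 1 into Step 2 gives
\[
E(W)\lessapprox |W|^3 N^{1-2\sigma}+|W|^2 N^{2-2\sigma}+|W|^{9/4}T^{1/2}N^{1-2\sigma}.
\]
Under $N\in[T^{2/3},T]$ we have $T^{1/2}\le N^{3/4}$, and a case analysis shows that the last term is dominated by the first two: if $|W|\le N$ then $|W|^{1/4}T^{1/2}\le |W|^{1/4}N^{3/4}\le N$, so $|W|^{9/4}T^{1/2}\le |W|^2 N$; if $|W|>N$, then $T^{1/2}\le N^{3/4}\le |W|^{3/4}$ (using $|W|\ge N\ge T^{2/3}$), so $|W|^{9/4}T^{1/2}\le |W|^3$. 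Multiplying through by $N^{1-2\sigma}$ absorbs the third term.

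The main difficulty is Step 2: converting the pointwise hypothesis on $|D|$ at points of $W$ into an inequality controlling $E(W)$ by a second-moment sum of $|D|$ at differences, with exactly the factor $|W|$. Everything else (Heath-Brown and the arithmetic absorption) is routine.
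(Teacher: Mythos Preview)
Your Step 2 has a genuine gap. The sketched ``Hal\'asz--Montgomery style'' argument does not produce a lower bound on $|D(\delta)|$ for $\delta\in W-W$: the quantity $D(t)\overline{D(t')}=\sum_{n,m}b_n\bar b_m\, n^{it}m^{-it'}$ is not a function of $t-t'$ alone, so ``expanding as a sum over $m$ of quantities dual to $D(\delta)$'' has no clear meaning. There is no reason the hypothesis $|D|\ge N^\sigma$ on $W$ should force $|D|$ to be large on $W-W$ (for a shifted arithmetic progression $W$, the sets $W$ and $W-W$ can be disjoint), so the displayed inequality with the specific polynomial $D$ is not something you can expect to prove this way.

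The paper's route fixes exactly this point, and in doing so shows that the Dirichlet polynomial appearing in the Heath-Brown step must have coefficients $1$, not $b_n$. One inserts the hypothesis at the \emph{fourth} point:
\[
E(W)=\sum_{\substack{t_1,\dots,t_4\in W\\ |t_1+t_2-t_3-t_4|\le 1}}1\;\le\; N^{-2\sigma}\sum_{\substack{t_1,\dots,t_4\in W\\ |t_1+t_2-t_3-t_4|\le 1}}|D(t_4)|^2,
\]
then uses that $D$ is essentially constant on unit scales to replace $t_4$ by $t_1+t_2-t_3$ and drop the $t_4$-sum (using $1$-separation to bound the multiplicity). Expanding $|D(t_1+t_2-t_3)|^2$ and summing \emph{freely} over $t_1,t_2,t_3\in W$ produces $R(n_1/n_2)^2\,\overline{R(n_1/n_2)}$ with $R(v):=\sum_{t\in W}v^{it}$, hence
\[
E(W)\lessapprox N^{-2\sigma}\sum_{n_1,n_2\sim N}|R(n_1/n_2)|^3\;\le\;|W|\,N^{-2\sigma}\sum_{n_1,n_2\sim N}|R(n_1/n_2)|^2
= |W|\,N^{-2\sigma}\sum_{t,t'\in W}\Bigl|\sum_{n\sim N}n^{i(t-t')}\Bigr|^2,
\]
which is your Step 2 with $a_n\equiv 1$ rather than $a_n=b_n$. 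Your Steps 1 and 3 then go through verbatim (Heath-Brown applies with $a_n=1$, and your case analysis for absorbing the $|W|^{5/4}T^{1/2}N$ term under $N\ge T^{2/3}$ is correct).
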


Combining Lemma \ref{lmm:BasicEnergy} with our earlier results is enough to ultimately get an improvement on \eqref{eq:ClassicalLargeValue} in the key scenario $N = T^{4/5}$, $|W| = T^{3/5}$.  
The second bound is a little more complicated, but it leads to stronger estimates in our applications.

%
%

\begin{prpstn}[Bound for energy] \label{prp:energybound} Suppose that $D(t) = \sum_{n \sim N} b_n n^{it}$ with $|b_n| \le 1$.   Suppose that $W$ is a 1-separated set contained in an interval of length $T$,  and that $|D(t)| \ge N^\sigma$ for $t \in W$.   If $T^{3/4} \le N \le T$,  then

\begin{equation}
E(W)\lessapprox |W| N^{4-4\sigma}+|W|^{21/8}T^{1/4}N^{1-2\sigma}+|W|^3N^{1-2\sigma}.
\label{eq:EnergyBound}
\end{equation}

\end{prpstn}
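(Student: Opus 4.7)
The plan is to bound $E(W)$ by combining the large-value information $|D_N(t)|\ge N^\sigma$ on $W$ with Heath-Brown's theorem (Theorem \ref{thrm:HeathBrown}) through a dyadic pigeonhole, refining the argument behind Lemma \ref{lmm:BasicEnergy}. Write $r(s):=\#\{(t_1,t_2)\in W^2: |t_1-t_2-s|<1\}$, so $E(W)\approx \sum_{s\in\mathbb{Z}} r(s)^2$, and choose unit-modulus phases $c_t:=\overline{D_N(t)}/|D_N(t)|$ so that $c_t D_N(t)=|D_N(t)|\ge N^\sigma$ for $t\in W$.

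For each $s$, I would consider the bilinear correlation
\[
U(s):=\sum_{\substack{t\in W\\ t-s\in W}}c_t\bar c_{t-s} D_N(t)\overline{D_N(t-s)}=\sum_{t\in W_s}|D_N(t)D_N(t-s)|\ge r(s)N^{2\sigma},
\]
where $W_s:=\{t\in W:t-s\in W\}$ has $|W_s|\approx r(s)$. This transfers the large-value information from $W$ to a sum that is amenable to Heath-Brown's theorem. Expanding $D_N(t)\overline{D_N(t-s)}=\sum_{n_1,n_2}b_{n_1}\bar b_{n_2}n_2^{is}(n_1/n_2)^{it}$, applying Cauchy-Schwarz in $(n_1,n_2)$ using $\sum|b_n|^2\le N$, and rearranging reduces $|U(s)|^2$ to an expression of the form $N^2\sum_{t,t'\in W_s}g_s(t)\bar g_s(t')|\sum_{n\sim N} n^{i(t-t')}|^2$ for unit-modulus weights $g_s(t)=c_t\bar c_{t-s}$. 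Applying Heath-Brown's theorem to the 1-separated set $\mathcal{T}=W_s$ (of size $\sim r(s)$) then yields
\[
|U(s)|^2\lessapprox N^2\bigl(r(s)^2 N+r(s)N^2+r(s)^{5/4}T^{1/2}N\bigr).
\]

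Next, I would introduce a dyadic decomposition: by pigeonhole, there is some $K\ge 1$ and a set $S_K=\{s:r(s)\sim K\}$ of size $M=|S_K|$ with $E(W)\lessapprox K^2 M$ and $KM\le|W|^2$. Combining the lower bound $|U(s)|^2\ge K^2 N^{4\sigma}$ for $s\in S_K$ with the Heath-Brown upper bound and summing over $s\in S_K$ gives
\[
E(W) N^{4\sigma}\lessapprox M N^2\bigl(K^2 N+KN^2+K^{5/4}T^{1/2}N\bigr).
\]
Dividing by $N^{4\sigma}$ and optimizing over the dyadic parameter $K$ subject to $KM\le|W|^2$, $K\le|W|$, and the trivial $E(W)\le|W|^3$ (possibly via interpolation with Lemma \ref{lmm:BasicEnergy} for intermediate ranges) produces the three terms in the statement: the first Heath-Brown term yields $|W|^3 N^{1-2\sigma}$ matching Lemma \ref{lmm:BasicEnergy}; the second yields $|W| N^{4-4\sigma}$; and the third, after balancing $K^{5/4}$ against the constraint $KM\le|W|^2$, yields the key new term $|W|^{21/8}T^{1/4}N^{1-2\sigma}$ with the exponent $\tfrac{21}{8}$.

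The main obstacle is twofold. First, the algebraic reduction from $|U(s)|^2$ into a form to which Heath-Brown's theorem applies must be carried out without losing essential factors of $N^\epsilon$, requiring careful smoothing of the approximate constraint $|t-t'-s|<1$ (instead of exact equality) and careful tracking of the phases $g_s$. Second, and more seriously, the optimization producing the precise exponent $\tfrac{21}{8}$ is delicate: naive balancing between $K$, $M=|S_K|$, the three Heath-Brown terms, and the constraint $KM\le|W|^2$ does not immediately give this exponent, so one likely needs either a secondary dyadic refinement applied to the pair structure inside each fiber $W_s$, or a careful combination of the three bounds via interpolation. The hypothesis $T^{3/4}\le N\le T$ enters to ensure that the Heath-Brown third term is the dominant one at the dyadic scales relevant for the optimization.
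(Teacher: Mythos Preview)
Your approach has a genuine gap: the fiber-by-fiber application of Heath-Brown to $W_s$ after a Cauchy--Schwarz in $(n_1,n_2)$ is too lossy to reach the stated bound, and the shortfall is not merely an optimisation issue.

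Concretely, from your displayed inequality $E(W)N^{4\sigma}\lessapprox M N^2(K^2N+KN^2+K^{5/4}T^{1/2}N)$ together with $MK^2\asymp E(W)$ and $MK\le |W|^2$, the first term gives only $N^{4\sigma}\lessapprox N^3$ (no information on $E(W)$ when $\sigma>3/4$), the second gives $E(W)\lessapprox MK\,N^{4-4\sigma}\le |W|^2N^{4-4\sigma}$, and the third gives at best $E(W)\lessapprox |W|^{9/4}T^{1/2}N^{3-4\sigma}$ (via $MK^{5/4}\le (MK)K^{1/4}\le |W|^{9/4}$). None of these match the target terms: you are off by a full factor of $|W|$ on the main term $|W|N^{4-4\sigma}$, and the exponent $21/8$ with $T^{1/4}N^{1-2\sigma}$ simply does not arise from any balancing of your three terms against $MK\le|W|^2$, $K\le|W|$. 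The root cause is that you spend \emph{two} large values ($|D_N(t)D_N(t-s)|\ge N^{2\sigma}$) and then immediately give back a factor $N^2$ via Cauchy--Schwarz in $(n_1,n_2)$; the net prefactor $N^{2-4\sigma}$ is worse than what the problem allows.

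The paper instead spends only one large value, writing
\[
E(W)\lessapprox N^{-2\sigma}\sum_{t_1,t_2,t_3\in W}|D_N(t_1+t_2-t_3)|^2\lessapprox N^{-2\sigma}\sum_{n_1,n_2\sim N}\Bigl|R\Bigl(\tfrac{n_1}{n_2}\Bigr)\Bigr|^3,
\]
with $R(v)=\sum_{t\in W}v^{it}$ encoding the \emph{whole} set $W$ rather than a fiber. The cubic moment is then split according to $d=\gcd(n_1,n_2)$: for small $d$ the fractions $n_1/n_2$ are well-spread and one compares with $\int|R|^3\le\|R\|_2\|R\|_4^2$, giving the $|W|N^{4-4\sigma}$ term; for large $d$ one applies Cauchy--Schwarz to pass to discrete second and fourth moments of $R$. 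The fourth-moment bound is where the dyadic idea belongs: decompose $W^2$ by the representation count $B$ of the difference, set $U_B=\{u:\#\{(t_1,t_2):\lfloor t_1-t_2\rfloor=u\}\sim B\}$, and apply Heath-Brown to $U_B$. The constraints $B|U_B|\lesssim|W|^2$ and $B^2|U_B|\lesssim E(W)$ then yield $\sum|R|^4\lessapprox M^2E(W)+|W|^4M+E(W)^{3/4}|W|T^{1/2}M$. The exponent $21/8$ finally comes from combining, via Cauchy--Schwarz, the $|W|^{5/4}T^{1/2}M$ term of the second moment with the $|W|^4M$ term of the fourth moment in the large-$d$ range (when $|W|<T^{2/3}$), giving $(|W|^{5/4})^{1/2}(|W|^4)^{1/2}T^{1/4}=|W|^{21/8}T^{1/4}$. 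The hypothesis $N\ge T^{3/4}$ is used precisely in this large-$d$ simplification.
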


%
%

Since the algebra is a little messy,  we take a moment to process the bounds.   For one thing,  if $E(W)$ is very large,  we get very strong bounds on $\sigma$.   For instance,  if $E(W) \approx |W|^3$,  and if  $N^{2/3} \le T \le N$, then Lemma \ref{lmm:BasicEnergy} gives a sharp estimate: either $N^\sigma \lessapprox N^{1/2}$ or  $|W| N^{2 \sigma} \lessapprox N^2$.   More important for our application is when we get an improvement on the basic orthogonality bound $|W| N^{2 \sigma} \ll T N$.   The full equations are a little messy,  but if we plug in the key scenario $\sigma=3/4$ and $N= T^{4/5}$, then Lemma \ref{lmm:BasicEnergy} gives an improvement on the bound $|W| \lessapprox T^{3/5}$ when $E(W) \ge |W|^{\frac{8}{3} + \epsilon}$ and Proposition \ref{prp:energybound} gives an improvement when $E(W) \ge |W|^{\frac{19}{8}+\epsilon}$ (for some $\epsilon > 0$).  

If $|W|\approx TN^{1-2\sigma}$, then the bound in Proposition \ref{prp:energybound} would be $(N/T)^2|W|^3+(|W|^{5/8}T^{-6/8})|W|^3 +|W|^4/T$. The first term will be the most important for us, and generally sets the limitations on our bounds. The second term will be negligible in practice (and could be improved with a bit more effort). The final term corresponds to the additive energy of a random set in an interval of length $T$.

The condition $N\ge T^{3/4}$ is used to simplify intermediate terms occurring in the proof of Proposition \ref{prp:energybound} and this range could be improved with some extra effort. For the purposes of Theorem \ref{thrm:LargeValues} the key situation is when $N=T^{5/6}$ as in Proposition \ref{prpstn:KeyProp}.

An immediate consequence of Proposition \ref{prp:energybound} is a good bound for the key term $S_3$ by substituting the bound of Proposition \ref{prp:energybound} (applied to $D_N(t)$) into Proposition \ref{prpstnS3}.

%
%

\begin{prpstn}[$S_3$ Bound]\label{prpstn:S3}
Let $N\ge T^{3/4}$. Then we have

\[
S_{3}\lessapprox T^2|W|^{3/2}+T|W|N^{3-2\sigma}+T|W|^2N^{3/2-\sigma}+T^{9/8}|W|^{29/16}N^{3/2-\sigma}.
\]

\end{prpstn}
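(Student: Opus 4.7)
The plan is to simply combine Proposition \ref{prpstnS3} with the energy bound of Proposition \ref{prp:energybound}. Proposition \ref{prpstnS3} tells us that
\[
S_3 \lessapprox T^2|W|^{3/2} + TN|W|^{1/2}E(W)^{1/2},
\]
so the whole task is to insert a good bound for $E(W)^{1/2}$ into the second term.

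Since $N\ge T^{3/4}$ puts us in the regime where Proposition \ref{prp:energybound} applies, I would plug in
\[
E(W)\lessapprox |W|N^{4-4\sigma} + |W|^{21/8}T^{1/4}N^{1-2\sigma} + |W|^3 N^{1-2\sigma}.
\]
Taking square roots termwise (which is legal up to a constant since $(a+b+c)^{1/2}\lesssim a^{1/2}+b^{1/2}+c^{1/2}$) gives
\[
E(W)^{1/2}\lessapprox |W|^{1/2}N^{2-2\sigma} + |W|^{21/16}T^{1/8}N^{1/2-\sigma} + |W|^{3/2}N^{1/2-\sigma}.
\]

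Multiplying through by $TN|W|^{1/2}$ and simplifying each term gives
\[
TN|W|^{1/2}E(W)^{1/2}\lessapprox T|W|N^{3-2\sigma} + T^{9/8}|W|^{29/16}N^{3/2-\sigma} + T|W|^2 N^{3/2-\sigma},
\]
after which adding back the $T^2|W|^{3/2}$ term from Proposition \ref{prpstnS3} yields exactly the claimed bound. There is no genuine obstacle here beyond bookkeeping; the work has already been done in establishing Propositions \ref{prpstnS3} and \ref{prp:energybound}, and the only thing to check is that the arithmetic of the exponents lines up. In particular the middle term $|W|^{21/8}T^{1/4}N^{1-2\sigma}$ in the energy bound becomes the somewhat unusual $T^{9/8}|W|^{29/16}N^{3/2-\sigma}$ term in the statement, which is the one worth double-checking.
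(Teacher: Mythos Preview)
Your proposal is correct and is exactly the approach the paper takes: the paper states that Proposition \ref{prpstn:S3} is ``an immediate consequence of Proposition \ref{prp:energybound} \dots\ by substituting the bound of Proposition \ref{prp:energybound} into Proposition \ref{prpstnS3}.'' Your exponent arithmetic is right, including the $T^{9/8}|W|^{29/16}N^{3/2-\sigma}$ term.
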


Now we turn to the proof of Proposition \ref{prp:energybound}.

Suppose that $D(t) = \sum_{n \sim N} b_n n^{it}$ and $|D(t)| \ge N^\sigma$ on $W$.  Morally,  we also have $|D(t)| \gtrapprox N^\sigma$ if the distance from $t$ to $W$ is $\ll 1$.   In particular, if $|t_1 + t_2 - t_3 - t_4| \le 1$,  then morally $|D(t_1 + t_2 - t_3)| \gtrapprox N^\sigma$.   Therefore we should expect that

\[ E(W) \lessapprox \sum_{t_1, t_2, t_3 \in W} \frac{| D(t_1 + t_2 - t_3) |^2}{ N^{2 \sigma}}=N^{-2\sigma}\sum_{n_1,n_2\sim N}b_{n_1}\overline{b_{n_2}}R\Bigl(\frac{n_1}{n_2}\Bigr)^2R\Bigl(\frac{n_2}{n_1}\Bigr). \]

We can make this heuristic argument literally true by working with a slightly smoothed version of $D$.

%
%

\begin{lmm}[Dirichlet polynomials do not vary too fast] \label{lmm:randtrans} Let $D(t)$ be as in Proposition \ref{prp:energybound}. Then we have

\[
|D(t)|\ll \int_{|u-t|\lessapprox 1}|D(u)|du + O(T^{-100}).
\]

\end{lmm}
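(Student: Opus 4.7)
The heuristic is that $D_N(t)$ is essentially band-limited: after factoring out a modulus-one phase, its frequency content lives in a bounded interval. The plan is to implement this precisely via an exact Fourier inversion identity.

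The first step is to normalize. Write $\tilde D_N(t):= N^{-it} D_N(t)= \sum_n w(n/N) b_n (n/N)^{it}$, so that $|D_N(t)|=|\tilde D_N(t)|$ and the frequencies appearing in $\tilde D_N$ are $\log(n/N)\in [0,\log 2]$ for $n$ in the support of $w(\cdot/N)$, i.e.\ an interval of length $O(1)$ independent of $N$.

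Next, I fix a Schwartz function $\varphi$ whose Fourier transform $\hat\varphi$ is smooth, compactly supported, and identically equal to $1$ on $[0,\log 2]$ (such $\varphi$ exist by standard mollification). Then for every $n$ in the support we have $\hat\varphi(\log(n/N))=1$, and Fourier inversion gives the pointwise identity
\[
(n/N)^{it}= \hat\varphi(\log(n/N))\,(n/N)^{it}= \int \varphi(u)\, (n/N)^{i(t-u)} \,du.
\]
Multiplying by $w(n/N) b_n$ and summing in $n$ (which is legal as this is a finite sum) produces the reproducing identity
\[
\tilde D_N(t)=\int \varphi(u)\, \tilde D_N(t-u)\, du.
\]
Taking absolute values and using $|D_N|=|\tilde D_N|$ yields
\[
|D_N(t)| \le \int |\varphi(u)|\, |D_N(t-u)|\,du.
\]

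The final step is to split this integral at $|u|=T^{\epsilon}$. The contribution from $|u|\le T^{\epsilon}$ is bounded by $\lessapprox \int_{|u-t|\lessapprox 1} |D_N(u)|\,du$ since $\varphi$ is bounded, matching the main term of the lemma (with the $T^\epsilon$ absorbed by $\lessapprox$). For the tail $|u|>T^{\epsilon}$, I use the Schwartz decay $|\varphi(u)|\lesssim_K (1+|u|)^{-K}$ together with the trivial bound $|D_N(t-u)|\le O(N)\le O(T)$ to bound the tail by $O_K(T \cdot T^{-\epsilon(K-1)})$, which is $O(T^{-100})$ by choosing $K$ sufficiently large in terms of $\epsilon$.

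There is no real obstacle here; the only care needed is to set up the Fourier conventions so that $\hat\varphi$ can be taken identically $1$ on a fixed interval containing all frequencies $\log(n/N)$, and to handle the (negligible) tail of the Schwartz mollifier. The argument is essentially the observation that band-limited functions cannot oscillate on scales finer than the reciprocal of their bandwidth.
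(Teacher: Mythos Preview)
Your proposal is correct and takes essentially the same approach as the paper: both exploit that $D_N$ is band-limited by inserting a Fourier multiplier identically $1$ on the frequency support and then reading off a convolution identity with a rapidly decaying kernel. The only cosmetic difference is that the paper multiplies by a smooth compactly supported bump $\psi$ centered at $\log N/(2\pi)$ (so $\hat\psi$ is Schwartz), whereas you first normalize by $N^{-it}$ and then take $\varphi$ Schwartz with $\hat\varphi$ compactly supported; either choice gives the required rapid decay of the kernel and the same tail estimate.
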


\begin{proof} 
Let $\psi(x)$ be a smooth bump which is supported on $|2\pi x-\log{N}|\ll 1$ and is equal to 1 on $[(2\pi)^{-1}\log{N},(2\pi)^{-1}\log{2N}]$, and extend $b_n$ to all $n\in \mathbb{Z}$ by setting $b_n=0$ if it is not the case that $n\sim N$. Then we have

\[
D(t)=\sum_{n\sim N}b_n n^{it}=\sum_{n}b_n n^{it} \psi\Bigl(\frac{\log{n}}{2\pi}\Bigr)=\int \widehat{\psi}(\xi) D(t-\xi)d\xi.
\]

By the rapid decay of $\widehat{\psi}$ we may restrict to $|\xi|\lessapprox 1$ at the cost of an error $O(T^{-100})$.
\end{proof}

%
%

\begin{lmm}[Energy controlled by discrete $3^{rd}$ moment]\label{lmm:Energy1}
Let $D(t)$ and $W$  be as in Proposition \ref{prp:energybound}. Then we have 

\[
E(W)\lessapprox N^{-2\sigma}\sum_{n_1,n_2\sim N}\Bigl|R\Bigl(\frac{n_1}{n_2}\Bigr)\Bigr|^3.
\]

\end{lmm}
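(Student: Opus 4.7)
The plan is to derive the intermediate estimate
\[
E(W)N^{2\sigma}\lessapprox \int_{|v|\lessapprox 1}\sum_{t_1,t_2,t_3\in W}\bigl|D_N(t_1+t_2-t_3+v)\bigr|^2\,dv,
\]
and then collapse the right-hand side into the cubic sum of $|R|$ by an algebraic expansion.

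The first inequality rests on Lemma \ref{lmm:randtrans} combined with Cauchy--Schwarz. Applied at any $t\in W$, the lemma gives $N^\sigma\le |D_N(t)|\lessapprox \int_{|u-t|\lessapprox 1}|D_N(u)|\,du+O(T^{-100})$, and squaring via Cauchy--Schwarz yields $N^{2\sigma}\lessapprox \int_{|u-t|\lessapprox 1}|D_N(u)|^2\,du$ (the $T^{-100}$ error being absorbed since $N^\sigma\ge 1$). For any quadruple $(t_1,t_2,t_3,t_4)\in E(W)$ with $|t_1+t_2-t_3-t_4|<1$, taking $t=t_4$ and enlarging the integration neighborhood gives
\[
N^{2\sigma}\lessapprox \int_{|u-(t_1+t_2-t_3)|\lessapprox 1}|D_N(u)|^2\,du.
\]
Summing over all quadruples in $E(W)$ and using that $W$ is $1$-separated (so that each triple $(t_1,t_2,t_3)\in W^3$ accounts for $O(1)$ admissible values of $t_4$), I obtain
\[
E(W)N^{2\sigma}\lessapprox \sum_{t_1,t_2,t_3\in W}\int_{|u-(t_1+t_2-t_3)|\lessapprox 1}|D_N(u)|^2\,du.
\]
Changing variables $u=t_1+t_2-t_3+v$ and swapping sum with integral produces the target intermediate estimate.

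For the algebraic step, I expand $|D_N(\tau)|^2=\sum_{n_1,n_2\sim N}b_{n_1}\overline{b_{n_2}}(n_1/n_2)^{i\tau}$ and sum over $t_1,t_2,t_3\in W$ using $\sum_{t\in W}(n_1/n_2)^{it}=R(n_1/n_2)$ together with the conjugation identity $R(n_2/n_1)=\overline{R(n_1/n_2)}$ (valid since $t\in\RR$). This gives
\[
\sum_{t_1,t_2,t_3\in W}|D_N(t_1+t_2-t_3+v)|^2=\sum_{n_1,n_2\sim N}b_{n_1}\overline{b_{n_2}}(n_1/n_2)^{iv}R(n_1/n_2)|R(n_1/n_2)|^2.
\]
Since the left-hand side is a sum of non-negative quantities and $|b_{n_1}\overline{b_{n_2}}(n_1/n_2)^{iv}|\le 1$, the triangle inequality produces the pointwise bound
\[
\sum_{t_1,t_2,t_3\in W}|D_N(t_1+t_2-t_3+v)|^2\le \sum_{n_1,n_2\sim N}|R(n_1/n_2)|^3,
\]
uniformly in $v$. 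Integrating over $|v|\lessapprox 1$ and combining with the intermediate estimate yields the claim.

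I do not expect a serious obstacle here: the argument is essentially a combination of a smoothing trick and an algebraic expansion. The only delicate point is the transfer from $t_4$ (where the lower bound $|D_N|\ge N^\sigma$ is available) to $t_1+t_2-t_3$ (which need not lie in $W$, so $|D_N(t_1+t_2-t_3)|$ may be small pointwise); this is precisely what forces us to introduce the integration over the auxiliary parameter $v$ rather than attempting a pointwise substitution. The algebraic collapse $R(n_1/n_2)^2\cdot R(n_2/n_1)=R(n_1/n_2)|R(n_1/n_2)|^2$ is the reason the final bound is a cubic moment of $|R|$ rather than, say, an $L^4$-type quantity.
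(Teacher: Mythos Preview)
Your proof is correct and follows essentially the same approach as the paper's: bound $|D_N(t_4)|^2$ via Lemma~\ref{lmm:randtrans} and Cauchy--Schwarz, shift the integration window from $t_4$ to $t_1+t_2-t_3$, drop the sum over $t_4$ using the separation of $W$, then expand $|D_N|^2$ as a double sum over $n_1,n_2$ and collapse the $t_j$-sums into factors of $R$. The paper presents the same steps in the same order with only cosmetic differences in notation.
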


\begin{proof}
Since $|D(t)|>N^\sigma$ for $t\in W$, we have

\[
E(W)=  \sum_{\substack{t_1, t_2, t_3, t_4 \in W\\  |t_1 + t_2 - t_3 - t_4| \le 1}}1\le N^{-2\sigma} \sum_{\substack{t_1, t_2, t_3, t_4 \in W\\  |t_1 + t_2 - t_3 - t_4| \le 1}}|D(t_4)|^2.
\]

By Lemma \ref{lmm:randtrans} and Cauchy-Schwarz, we have for $|t_1+t_2-t_3-t_4|\le 1$

\[
|D(t_4)|^2\ll \int_{|u-t_4|\lessapprox 1}|D(u)|^2 du \ll \int_{|u-t_1+t_2-t_3|\lessapprox 1}|D(u)|^2 du.
\]

Since $W$ is $1$-separated, given $t_1,t_2,t_3$ there are $\ll 1$ choices of $t_4\in W$ such that $|t_1+t_2-t_3-t_4|\le 1$. Thus we see that

\begin{align*}
E(W)&\ll N^{-2\sigma}\sum_{t_1,t_2,t_3\in W}\int_{s\lessapprox 1}|D(t_1+t_2-t_3-s)|^2ds\\
&= N^{-2\sigma}\sum_{n_1,n_2\sim N}b_{n_1}\overline{b_{n_2}}\int_{s\lessapprox 1}\Bigl(\frac{n_2}{n_1}\Bigr)^{is}R\Bigl(\frac{n_1}{n_2}\Bigr)^2 R\Bigl(\frac{n_2}{n_1}\Bigr)ds\\
&\lessapprox N^{-2\sigma}\sum_{n_1,n_2\sim N}\Bigl|R\Bigl(\frac{n_1}{n_2}\Bigr)\Bigr|^3.\qedhere
\end{align*}
\end{proof}

%
%

\begin{rmk}
In the final line of the proof of Lemma \ref{lmm:Energy1} we made important use of the assumption $|b_n|\le 1$. This is the only place in the paper where we make use of this $\ell^\infty$ bound rather than an $\ell^2$ bound. If we only had a weaker bound on the coefficients available, we would ultimately require a version of Heath-Brown's Theorem with weaker assumptions on the coefficients $b_n$.
\end{rmk}

%
%

Next we note that Heath-Brown's theorem (Theorem \ref{thrm:HeathBrown}) bounds $\sum_{n_1,n_2\sim N}|R(\frac{n_1}{n_2})|^2$ fairly directly and, with a bit more work, can be used to bound $\sum_{n_1,n_2\sim N}|R(\frac{n_1}{n_2})|^4$ too.

\begin{lmm}[Discrete second moment] \label{lmm:R2bound} For any $M \ge 1$, 

\[ \sum_{n_1,n_2\sim M}\Bigl|R\Bigl(\frac{n_1}{n_2}\Bigr)\Bigr|^2 \lessapprox  |W|^2 M + |W| M^2 + |W|^{5/4}T^{1/2} M. \]

\end{lmm}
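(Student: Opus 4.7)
The plan is to unfold the square, swap the order of summation, and recognize the resulting inner sum as a Dirichlet polynomial to which we can apply Heath-Brown's theorem directly. Concretely, since $R(v) = \sum_{t \in W} v^{it}$ and $v > 0$, we have $\overline{R(v)} = R(v^{-1})$ and hence
\[
\Bigl|R\Bigl(\frac{n_1}{n_2}\Bigr)\Bigr|^2 = \sum_{t_1,t_2 \in W} \Bigl(\frac{n_1}{n_2}\Bigr)^{i(t_1-t_2)} = \sum_{t_1,t_2 \in W} n_1^{i(t_1-t_2)}\, \overline{n_2^{i(t_1-t_2)}}.
\]

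Summing over $n_1,n_2 \sim M$ and interchanging the order of summation, the inner double sum over $n_1,n_2$ factors and we obtain
\[
\sum_{n_1,n_2\sim M}\Bigl|R\Bigl(\frac{n_1}{n_2}\Bigr)\Bigr|^2 = \sum_{t_1,t_2\in W} \Bigl|\sum_{n \sim M} n^{i(t_1-t_2)}\Bigr|^2.
\]

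Now apply Theorem \ref{thrm:HeathBrown} with $\mathcal{T} = W$ (which is $1$-separated and contained in an interval of length $T$), with length parameter $N$ replaced by $M$, and with the trivial $1$-bounded coefficient sequence $a_n = 1$ for $n \sim M$ (splitting into $O(1)$ dyadic ranges of the form $[N,2N]$ if necessary). This gives
\[
\sum_{t_1,t_2 \in W} \Bigl|\sum_{n \sim M} n^{i(t_1-t_2)}\Bigr|^2 \lessapprox |W|^2 M + |W| M^2 + |W|^{5/4} T^{1/2} M,
\]
which is exactly the claimed bound. There is no real obstacle here — the content is entirely in Heath-Brown's theorem, and this lemma is just the observation that the diagonal structure $v = n_1/n_2$ converts a sum of $|R|^2$ into the standard mean-value sum to which Heath-Brown applies.
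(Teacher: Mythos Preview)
Your proof is correct and is essentially identical to the paper's own proof: expand $|R(n_1/n_2)|^2$, swap the order of summation to obtain $\sum_{t_1,t_2\in W}\bigl|\sum_{n\sim M} n^{i(t_1-t_2)}\bigr|^2$, and apply Theorem~\ref{thrm:HeathBrown}. The paper's version is just a two-line compression of exactly what you wrote.
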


\begin{proof}
We have that

\[
\sum_{n_1, n_2 \sim M}\Bigl|R\Bigl(\frac{n_1}{n_2}\Bigr)\Bigr|^2=\sum_{t_1,t_2\in W}\Bigl|\sum_{n\sim M} n^{i(t_1-t_2)}\Bigr|^2,
\]

so by Theorem \ref{thrm:HeathBrown} this is

\[
\lessapprox |W|^2 M + |W| M^2 + |W|^{5/4}T^{1/2} M.\qedhere
\]
\end{proof}

%
%

Lemma \ref{lmm:BasicEnergy} is now a quick consequence of our arguments so far.
\begin{proof}[Proof of Lemma \ref{lmm:BasicEnergy}]
By Lemma \ref{lmm:Energy1} and the trivial bound $|R(x)|\le |W|$ we have

\[
E(W)\lessapprox N^{-2\sigma}\sum_{n_1,n_2\sim N}\Bigl|R\Bigl(\frac{n_1}{n_2}\Bigr)\Bigr|^3\le |W|N^{-2\sigma}\sum_{n_1,n_2\sim N}\Bigl|R\Bigl(\frac{n_1}{n_2}\Bigr)\Bigr|^2.
\]

Lemma \ref{lmm:R2bound} (which is just Theorem \ref{thrm:HeathBrown}) now shows for $N>T^{2/3}$ we have

\[
E(W)\lessapprox |W|^3N^{1-2\sigma}+|W|^2N^{2-2\sigma}.\qedhere
\]
\end{proof}

To do better we look at higher moments to avoid the potentially wasteful use of the trivial bound $|R(x)|\le |W|$.

%
%

\begin{lmm}[Discrete fourth moment] \label{lmm:R4bound} For any $M \ge 1$, 

\[ \sum_{n_1,n_2\sim M}\Bigl|R\Bigl(\frac{n_1}{n_2}\Bigr)\Bigr|^4  \lessapprox  |W|^4 M + M^2 E(W) +  E(W)^{3/4}|W|T^{1/2}M.\]

\end{lmm}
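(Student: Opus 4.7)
\medskip

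\emph{Proof plan.} The strategy is to expand the fourth power, recognise the inner sum as the squared modulus of a Dirichlet polynomial evaluated in the difference sumset of $W+W$, dyadically decompose according to multiplicity in that sumset, and apply Heath--Brown's theorem to each level set.

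First expand
\[
\sum_{n_1,n_2\sim M}\Bigl|R\Bigl(\frac{n_1}{n_2}\Bigr)\Bigr|^4
=\sum_{(t_1,t_2,t_3,t_4)\in W^4}\Bigl|\sum_{n\sim M}n^{i(t_1+t_2-t_3-t_4)}\Bigr|^2,
\]
and write $s_{\vec t}:=t_1+t_2-t_3-t_4$ and $D_0(s):=\sum_{n\sim M}n^{is}$. The quadruples with $|s_{\vec t}|<1$ contribute at most $M^2 E(W)$ by the trivial bound $|D_0|\le M$ combined with the definition of $E(W)$; this accounts for the first term in the claimed estimate.

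For the remaining quadruples I exploit the sumset structure. Bin the multiset $\{t_1+t_2:(t_1,t_2)\in W^2\}$ into unit intervals and pick 1-separated representatives $\mathcal{S}\subset \mathbb{R}$ with multiplicities $m(s)\ge 1$, so that $\sum_{s\in\mathcal{S}}m(s)=|W|^2$ and $\sum_{s\in\mathcal{S}}m(s)^2\lesssim E(W)$ (two elements of $W^2$ sharing a bin forces $|t_1+t_2-t_3-t_4|<1$). After a mild smoothing of $D_0$ (using that $|D_0(s)|^2=\sum_{n_1,n_2\sim M}(n_1/n_2)^{is}$ is essentially bandlimited to $|\xi|\lesssim 1$ and so is morally constant on unit intervals), the contribution of the quadruples with $|s_{\vec t}|\ge 1$ is bounded by
\[
\sum_{\substack{s_1,s_2\in\mathcal{S}\\ s_1\ne s_2}} m(s_1)m(s_2)\,|D_0(s_1-s_2)|^2.
\]

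Next I dyadically decompose: for each dyadic $A$ set $\mathcal{S}_A:=\{s\in\mathcal{S}:m(s)\sim A\}$. A Cauchy--Schwarz on the dyadic pair of levels reduces cross terms to diagonal ones at the cost of log factors, so it suffices to control
\[
\sum_{A\,\mathrm{dyadic}} A^2\sum_{s_1,s_2\in\mathcal{S}_A}|D_0(s_1-s_2)|^2.
\]
Since each $\mathcal{S}_A$ is 1-separated and lies in an interval of length $\le 2T$, Heath--Brown's Theorem~\ref{thrm:HeathBrown} gives
\[
\sum_{s_1,s_2\in\mathcal{S}_A}|D_0(s_1-s_2)|^2\lessapprox |\mathcal{S}_A|^2 M+|\mathcal{S}_A|M^2+|\mathcal{S}_A|^{5/4}T^{1/2}M.
\]

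Finally I combine these bounds with the two a priori constraints $\sum_A A\,|\mathcal{S}_A|\le |W|^2$ and $\sum_A A^2\,|\mathcal{S}_A|\lesssim E(W)$. The Heath--Brown contributions, after multiplying by $A^2$ and summing over $A$, are
\[
\sum_A A^2|\mathcal{S}_A|^2 M =\sum_A (A|\mathcal{S}_A|)^2 M\le |W|^4 M,\qquad \sum_A A^2|\mathcal{S}_A|M^2 \lesssim E(W)M^2,
\]
while for the third term I apply H\"older with exponents $(4/3,4)$ to get
\[
\sum_A A^2|\mathcal{S}_A|^{5/4}=\sum_A (A^2|\mathcal{S}_A|)^{3/4}(A|\mathcal{S}_A|)^{1/2}\le \Bigl(\sum_A A^2|\mathcal{S}_A|\Bigr)^{3/4}\Bigl(\sum_A(A|\mathcal{S}_A|)^2\Bigr)^{1/4}\le E(W)^{3/4}|W|,
\]
producing $E(W)^{3/4}|W|T^{1/2}M$. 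Collecting the three contributions (the $E(W)M^2$ term being absorbed into the $M^2 E(W)$ term from the $|s_{\vec t}|<1$ case) yields the claimed bound.

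The main obstacle I expect is making the smoothing/binning step precise: I must justify that the variation of $D_0$ on unit intervals does not cost anything in passing from the four-variable sum $\sum_{\vec t} |D_0(s_{\vec t})|^2$ to its bin-weighted 1-separated analogue on $\mathcal{S}$. This should be done by introducing a smooth cutoff in the definition of $D_0$ analogous to Lemma~\ref{lmm:randtrans}; because $|D_0|^2$ is then a trigonometric polynomial with frequencies of size $O(1)$ in $s$, local averaging introduces only $T^{o(1)}$ losses which are swallowed by the $\lessapprox$ notation.
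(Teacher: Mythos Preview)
Your proposal is correct and follows essentially the same approach as the paper: dyadic decomposition of the (sum or difference) multiset $W\pm W$ by multiplicity, application of Heath--Brown's Theorem~\ref{thrm:HeathBrown} at each dyadic level, and recombination via the constraints $\sum_A A|\mathcal{S}_A|\le |W|^2$ and $\sum_A A^2|\mathcal{S}_A|\lesssim E(W)$. The only cosmetic differences are that the paper bins the \emph{difference} set $\{t_1-t_2\}$ (writing $|R|^4=|\sum_{t_1,t_2}x^{i(t_1-t_2)}|^2$) rather than the sumset, takes a supremum over dyadic levels rather than summing, and handles the smoothing step via Lemma~\ref{lmm:randtrans} exactly as you anticipate.
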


\begin{proof} We split the sum in the $R$ function according to the number of representations of $u$ as approximately $t_1-t_2$. Let $\lfloor x \rfloor$ denote the largest integer $\le x$, and define

\[
U_B:=\Big\{u\in \mathbb{Z}:\, \# \{(t_1,t_2)\in W^2:\,\lfloor t_1-t_2\rfloor=u\} \sim B \Big\}.
\]

Clearly $U_B$ is empty if $B<1/2$ or if $B>|W|$. Thus, using Cauchy-Schwarz

\begin{align*}
|R(x)|^4=\Bigl|\sum_{t_1,t_2\in W}x^{i(t_1-t_2)}\Bigr|^2&=\Bigl|\sum_{B=2^j}\sum_{u\in U_B}\sum_{\substack{t_1,t_2\in W\\ \lfloor t_1-t_2\rfloor=u}}x^{i(t_1-t_2)}\Bigr|^2\\
&\lessapprox \sum_{B=2^j\le |W|}\Bigl|\sum_{u\in U_B}\sum_{\substack{t_1,t_2\in W\\ \lfloor t_1-t_2\rfloor=u}}x^{i(t_1-t_2)}\Bigr|^2.
\end{align*}

Taking $x=n_1/n_2$ and summing over $n_1,n_2\sim M$ then gives

\begin{align*}
\sum_{n_1,n_2\sim M}\Bigr|R\Bigl(\frac{n_1}{n_2}\Bigr)\Bigr|^4&\lessapprox \sup_{B\le |W|}\sum_{n_1,n_2\sim M}\Bigl|\sum_{u\in U_B}\sum_{\substack{t_1,t_2\in W\\ \lfloor t_1-t_2\rfloor=u}}\Bigl(\frac{n_1}{n_2}\Bigr)^{i(t_1-t_2)}\Bigr|^2\\
&\le\sup_{B\le |W|}\sum_{u_1,u_2\in U_B}\Bigl(\sum_{\substack{t_1,t_3\in W\\ \lfloor t_1-t_3\rfloor=u_1}}1\Bigr)\Bigl(\sum_{\substack{t_2,t_4\in W\\ \lfloor t_2-t_4\rfloor=u_2}}1\Bigr)\sup_{|s| \ll 1}\Bigl|\sum_{n\sim M}n^{i(u_1-u_2+s)}\Bigr|^2\\
&\lessapprox \sup_{B\le |W|}B^2\sum_{u_1,u_2\in U_B} \sup_{|s| \ll 1} \Bigl|\sum_{n\sim M}n^{i(u_1-u_2+s)}\Bigr|^2.
\end{align*}

By using Lemma  \ref{lmm:randtrans} to replace the supremum with an integral, and then applying Theorem \ref{thrm:HeathBrown},  we find

\begin{align*}
 \sum_{n_1,n_2\sim M}\Bigr|R\Bigl(\frac{n_1}{n_2}\Bigr)\Bigr|^4&\lessapprox \sup_{B\le |W|}B^2\int_{t\lessapprox 1}\sum_{u_1,u_2\in U_B} \Bigl|\sum_{n\sim M}n^{i(u_1-u_2+t)}\Bigr|^2dt\\
 &\lessapprox \sup_{B\le |W|}B^2\Bigl(|U_B|^2 M +  |U_B| M^2+ T^{1/2}|U_B|^{5/4} M\Bigr). 
 \end{align*}

We have that $B |U_B|\le |W|^2$ and $B^2 |U_B|\le E(W)$, so this gives

\begin{align*}
\sum_{n_1,n_2\sim M}\Bigr|R\Bigl(\frac{n_1}{n_2}\Bigr)\Bigr|^4&\lessapprox |W|^4 M + M^2 E(W) +  E(W)^{3/4}|W|T^{1/2}M.\qedhere
\end{align*}
\end{proof}

%
%

To bound $\sum_{n_1,n_2\sim N}|R\Bigl(\frac{n_1}{n_2}\Bigr)|^3$,  we could use H\"older:

\[ \sum_{n_1,n_2\sim N}\Bigl|R\Bigl(\frac{n_1}{n_2}\Bigr)\Bigr|^3 \le \left( \sum_{n_1,n_2\sim N}\Bigl|R\Bigl(\frac{n_1}{n_2}\Bigr)\Bigr|^2 \right)^{1/2} \left( \sum_{n_1,n_2\sim N}\Bigl|R\Bigl(\frac{n_1}{n_2}\Bigr)\Bigr|^4 \right)^{1/2}\]

\noindent and then bound the two factors using Lemmas \ref{lmm:R2bound} and \ref{lmm:R4bound}.   However, this H\"older step is somewhat lossy.   If $n_1'/n_2'$ is a rational number of small height,  then the sum $\sum_{n_1,n_2\sim N}\Bigl|R\Bigl(\frac{n_1}{n_2}\Bigr)\Bigr|^p$ counts $|R( n_1'/n_2')|^p$ many times -- because there are many $n_1,n_2 \sim N$ with $n_1/n_2 = n_1'/n_2'$.   The $4^{th}$ moment tends to be dominated by $n_1,n_2$ with large $\gcd(n_1,n_2)$, but the $2^{nd}$ moment tends to be dominated by $n_1, n_2$ with small $\gcd(n_1,n_2)$.   Therefore,  instead of doing H\"older immediately,  we now split our argument according to the size of $\gcd(n_1,n_2)$. 

Let $d=\gcd(n_1,n_2)$ and $n_1=n_1'd$, $n_2=n_2'd$ for some $n_1',n_2'\sim N/d$ with $\gcd(n_1',n_2')=1$. Thus we have for any choice of parameter $D$ (dropping the coprimality constraint when $d$ is large)
\begin{equation}
E(W)\le N^{-2\sigma}\sum_{d\le D}\sum_{\substack{n_1',n_2'\sim N/d\\ \gcd(n_1',n_2')=1}}\Bigl|R\Bigl(\frac{n_1'}{n_2'}\Bigr)\Bigr|^3+N^{-2\sigma}\sum_{d\ge D}\sum_{n_1',n_2'\sim N/d}\Bigl|R\Bigl(\frac{n_1'}{n_2'}\Bigr)\Bigr|^3.
\end{equation}

First we consider small $d$.   When $d$ is small enough, the distinct fractions $n_1'/n_2'$ are very well distributed and so it makes sense to compare our sum with $\int_{v\asymp 1} |R(v)|^3 dv$.  

We recall that $W$ is contained in an interval of length $T$.   Morally,  $|\widehat{W}(\tau)|$ is locally constant on intervals of length $1/T$.  Since $R(v) = \widehat{W}(\log v/(-2\pi))$,  we see that for $v \asymp 1$,  $|R(v)|$ is morally locally constant at scale $1/T$.  We make this precise in the following lemma:

%
%

\begin{lmm}\label{lmm:RStationary} For $v \asymp 1$,

\[ |R(v)| \ll T \int_{|v' - v| \lessapprox 1/T} |R(v')| dv' + O(T^{-100}). \]

\end{lmm}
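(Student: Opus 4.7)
The plan is to exploit the fact that $W$ lies in an interval of length $T$, which means that $\hat W(\tau)$ (and hence $R(v)$ via $v=e^\tau$) is essentially locally constant at scale $1/T$. This is the analog, on the Fourier side, of Lemma~\ref{lmm:randtrans}: there the smoothness was in $n$ at scale $N$, giving local constancy in $t$ at scale $1/N$; here the support of $W$ is at scale $T$, giving local constancy in $v$ at scale $1/T$.

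Concretely, after translation we may assume $W\subset[0,T]$. I would pick a smooth bump $\psi\colon\mathbb{R}\to\mathbb{R}$ which is identically $1$ on $[0,1]$ and compactly supported on a slight enlargement (so $\|\psi^{(j)}\|_\infty\lesssim_j 1$), and set $\psi_T(t):=\psi(t/T)$. Then $\psi_T\equiv 1$ on $W$, so
\[
R(v)=\sum_{t\in W}\psi_T(t)\,v^{it}.
\]
Now apply Fourier inversion $\psi_T(t)=\int \hat\psi_T(\xi)e(t\xi)\,d\xi$ and interchange the finite sum with the integral. The key algebraic identity is
\[
e(t\xi)\,v^{it}=e^{it(2\pi\xi+\log v)}=(e^{2\pi\xi}v)^{it},
\]
which gives the clean convolution-type representation
\[
R(v)=\int \hat\psi_T(\xi)\,R(e^{2\pi\xi}v)\,d\xi.
\]

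Next I would record that $\hat\psi_T(\xi)=T\hat\psi(T\xi)$, so $|\hat\psi_T(\xi)|\lesssim T$ everywhere and $|\hat\psi_T(\xi)|\lesssim_j T(T|\xi|)^{-j}$ for any $j$. In particular, for any $\eta>0$, the contribution from $|\xi|>T^{-1+\eta}$ to $\int\hat\psi_T(\xi)R(e^{2\pi\xi}v)\,d\xi$ is $O_\eta(T^{-100})$, using the trivial bound $|R|\le|W|\le T^{O(1)}$. Taking absolute values inside, we obtain
\[
|R(v)|\lesssim T\int_{|\xi|\lessapprox 1/T}|R(e^{2\pi\xi}v)|\,d\xi+O(T^{-100}).
\]

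Finally, change variables $v'=e^{2\pi\xi}v$. Since $v\asymp 1$ and $|\xi|\lessapprox 1/T$, the Jacobian $dv'=2\pi v e^{2\pi\xi}\,d\xi$ satisfies $dv'\asymp d\xi$, and $|v'-v|=v|e^{2\pi\xi}-1|\asymp|\xi|\lessapprox 1/T$. Substituting delivers the claimed bound
\[
|R(v)|\lesssim T\int_{|v'-v|\lessapprox 1/T}|R(v')|\,dv'+O(T^{-100}).
\]
There is no real obstacle here: the only thing to check carefully is that the rapid decay of $\hat\psi_T$ genuinely localizes $\xi$ to scale $1/T$ after accounting for the polynomial size of $|R|$, which is immediate from $|W|\le T^{O(1)}$ (guaranteed by the hypotheses of the propositions in which this lemma is invoked).
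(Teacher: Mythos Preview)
Your proof is correct and essentially the same as the paper's: both insert a bump function equal to $1$ on the interval containing $W$, apply Fourier inversion to obtain a convolution representation, and use the rapid decay of the bump's Fourier transform to localize at scale $1/T$. The only cosmetic difference is that the paper first passes to $\tau=\log v$ and works with $\hat W(\tau)$, whereas you work directly with $R(v)$ and change variables at the end; since $v\asymp 1$ makes the Jacobian $\asymp 1$, these are equivalent.
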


\begin{proof} Since $v \asymp 1$,  we can do a change of variables,  $\tau =(- 2\pi)^{-1} \log v$,  and it suffices to prove that

\[ |\widehat{W}(\tau)| \ll T \int_{|\tau' - \tau| \lessapprox 1/T} |\widehat{W}(\tau')| d \tau' + O(T^{-100}). \]

We know that $W$ is contained in an interval of length $T$; call this $[T_0,T_0+T]$. Let $\psi$ be a smooth bump which is 1 on $[0,1]$. Then we have

\begin{align*}
\widehat{W}(\tau)=\sum_{t\in W}e(-t\tau)=\sum_{t\in W}e(-t\tau)\psi\Bigl(\frac{t-T_0}{T}\Bigr)=\int \widehat{\psi}(\xi)\widehat{W}\Bigl(\tau-\frac{\xi}{T}\Bigr)e\Bigl(\frac{-T_0\xi}{T}\Bigr)d\xi.
\end{align*}

By the rapid decay of $\widehat{\psi}$, we may restrict the integral to $\xi\lessapprox 1$ at the cost of an $O(T^{-100})$ error term. Since $\widehat{\psi}\ll 1$ this then gives the result.
\end{proof}

%
%

\begin{lmm}[Small GCD terms]\label{lmm:DSmall} We have
\[
\sum_{\substack{n_1,n_2\sim N\\ \gcd(n_1,n_2)\le D}}\Bigl|R\Bigl(\frac{n_1}{n_2}\Bigr)\Bigr|^3\lessapprox ( D T + N^2 ) |W|^{1/2} E(W)^{1/2}.
\]
\end{lmm}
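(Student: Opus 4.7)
The plan is to separate out the gcd, convert the discrete sum over rationals into a continuous integral using the fact that $R$ is essentially locally constant at scale $1/T$, and then finish with the moment bounds from Lemmas \ref{RL2} and \ref{RL4}.

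First I would write $d=\gcd(n_1,n_2)$ and $n_i=dn_i'$ with $(n_1',n_2')=1$ and $n_1',n_2'\sim N/d$, reducing the target sum to
\[
\sum_{d\le D}\sum_{\substack{n_1',n_2'\sim N/d\\ \gcd(n_1',n_2')=1}}\Bigl|R\Bigl(\frac{n_1'}{n_2'}\Bigr)\Bigr|^3.
\]
To pass to an integral, I would cube Lemma \ref{lmm:RStationary}; by H\"older applied to the integration domain of length $\asymp 1/T$, this yields
\[
|R(v)|^3\lessapprox T\int_{|v'-v|\lessapprox 1/T}|R(v')|^3\,dv'+O(T^{-100}).
\]
Applying this at each $v=n_1'/n_2'$ and swapping sum and integral by Fubini, the inner sum is bounded by $T\int_{v\asymp 1}|R(v)|^3\,\mathcal{N}_d(v)\,dv$, where $\mathcal{N}_d(v)$ is the number of reduced fractions $p/q$ with $p,q\sim N/d$ lying within $O(1/T)$ of $v$.

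Next I would handle $\mathcal{N}_d(v)$ by Farey spacing: two distinct reduced fractions $p/q\ne p'/q'$ with $q,q'\sim N/d$ satisfy $|p/q-p'/q'|\ge 1/(qq')\gtrsim d^2/N^2$, so an interval of length $O(1/T)$ contains at most $\lessapprox 1+N^2/(Td^2)$ such fractions. Hence $T\mathcal{N}_d(v)\lessapprox T+N^2/d^2$, and summing over $d\le D$ produces a factor $\lessapprox DT+N^2$, leaving only the $d$-independent integral $\int_{v\asymp 1}|R(v)|^3\,dv$.

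Finally, I would bound this integral by Cauchy--Schwarz,
\[
\int_{v\asymp 1}|R(v)|^3\,dv\le\Bigl(\int_{v\asymp 1}|R(v)|^2\,dv\Bigr)^{1/2}\Bigl(\int_{v\asymp 1}|R(v)|^4\,dv\Bigr)^{1/2}\lessapprox |W|^{1/2}E(W)^{1/2},
\]
using Lemmas \ref{RL2} and \ref{RL4}. Combining these pieces gives the stated bound $(DT+N^2)|W|^{1/2}E(W)^{1/2}$. The only step that needs any care is the Farey-spacing count, but this is routine given $|p/q-p'/q'|\ge 1/(qq')$; everything else is a direct assembly of results already in the excerpt.
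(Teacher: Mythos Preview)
Your proposal is correct and follows essentially the same route as the paper's proof: extract the gcd, use Lemma~\ref{lmm:RStationary} (cubed via H\"older over the interval of length $\asymp 1/T$) to pass to the continuous integral, bound the resulting counting function by Farey spacing as $\lessapprox 1+N^2/(Td^2)$, sum over $d\le D$, and finish with Cauchy--Schwarz together with Lemmas~\ref{RL2} and~\ref{RL4}. The only cosmetic difference is that the paper writes the application of Lemma~\ref{lmm:RStationary} directly at the level of the sum rather than first stating the pointwise inequality $|R(v)|^3\lessapprox T\int_{|v'-v|\lessapprox 1/T}|R(v')|^3\,dv'$, but the content is identical.
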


\begin{proof}
Let $d=\gcd(n_1,n_2)$ and $n_1=n_1'd$, $n_2=n_2'd$ for some $n_1',n_2'\sim N/d$ with $\gcd(n_1',n_2')=1$. By Lemma \ref{lmm:RStationary}, we have

\[
\sum_{\substack{n_1',n_2'\sim N/d\\ \gcd(n_1',n_2')=1}}\Bigl|R\Bigl(\frac{n_1'}{n_2'}\Bigr)\Bigr|^3\ll T\int_{v\asymp 1} |R(v)|^3 \Bigl( \sum_{\substack{n_1,n_2'\sim N/d\\ \gcd(n_1',n_2')=1\\ |v - n_1'/n_2'| \lessapprox 1/T}}1\Bigr)dv.
\]

Since the fractions $n_1'/n_2'$ are $d^2/N^2$-separated, we have that the inner sum over $n_1',n_2'$ on the right hand side is $\lessapprox 1+N^2/(d^2T)$. Thus we find

\begin{align*}
\sum_{d\le  D} \sum_{\substack{n_1',n_2'\sim N/d\\ \gcd(n_1',n_2')=1}}&  \Big| R \Bigl(\frac{n_1'}{n_2'}\Bigr)\Big|^3 \lessapprox \sum_{d\le D}\Bigl(T+\frac{N^2}{d^2}\Bigr) \int_{v\asymp 1} |R(v)|^3dv \\
&\lessapprox \sum_{d\le D}\Bigl(T+\frac{N^2}{d^2}\Bigr) \left( \int_{v \asymp 1} |R(v)|^2 dv \right)^{1/2} \left( \int_{v \asymp 1} |R(v)|^4 dv \right)^{1/2} \\
&\lessapprox \Bigl( D T + N^2 \Bigr) |W|^{1/2} E(W)^{1/2}. \qedhere
\end{align*}
\end{proof}

%
%

We choose $D:=N^2/T$, so this gives 
\begin{equation}
\sum_{\substack{n_1,n_2\sim N\\ \gcd(n_1,n_2)\le D}}\Bigl|R\Bigl(\frac{n_1}{n_2}\Bigr)\Bigr|^3\lessapprox N^2|W|^{1/2}E(W)^{1/2}.\label{eq:SmallD}
\end{equation}

%
%

\begin{lmm}[Large GCD terms]\label{lmm:DBig}
Let $D=N^2/T$ and $N\ge T^{3/4}$. Then we have
\[
\sum_{\substack{n_1,n_2\sim N\\ \gcd(n_1,n_2)\ge D}}\Bigl|R\Bigl(\frac{n_1}{n_2}\Bigr)\Bigr|^3\lessapprox N|W|^3+NT^{1/4}|W|^{21/8}+E(W)^{1/2}|W|^{1/2}N^2.
\]
\end{lmm}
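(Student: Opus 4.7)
The plan is to apply Cauchy--Schwarz to reduce the target $L^3$ sum to a product of an $L^2$ and an $L^4$ sum, each of which is bounded by the preceding lemmas, and then to carry out the algebraic bookkeeping after summing over the gcd.

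First I would parameterize $d := \gcd(n_1,n_2)$ and write $n_i = d n_i'$, so that $n_i' \sim M := N/d$ with $\gcd(n_1',n_2') = 1$. Relaxing the coprimality constraint gives the upper bound
\[
\sum_{\substack{n_1,n_2 \sim N \\ \gcd(n_1,n_2)\ge D}} |R(n_1/n_2)|^3 \;\le\; \sum_{D\le d\le N}\sum_{n_1',n_2' \sim M} |R(n_1'/n_2')|^3.
\]
For each fixed $d$, Cauchy--Schwarz combined with Lemmas \ref{lmm:R2bound} and \ref{lmm:R4bound} yields
\[
\sum_{n_1',n_2' \sim M} |R(n_1'/n_2')|^3 \lessapprox \bigl(|W|M^2 + |W|^2 M + |W|^{5/4}T^{1/2}M\bigr)^{1/2}\bigl(M^2 E(W) + |W|^4 M + E(W)^{3/4}|W|T^{1/2}M\bigr)^{1/2}.
\]

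Next I would expand this as a sum of nine cross-terms and sum each over $d \in [D,N]$ using the elementary evaluations
\[
\sum_{D\le d\le N} M^2 \lessapprox T,\qquad \sum_{D\le d\le N} M^{3/2} \lessapprox N^{1/2}T^{1/2},\qquad \sum_{D\le d\le N} M \lessapprox N,
\]
which follow from $D=N^2/T$ and $N\ge T^{3/4}$. Three of these nine contributions directly match the stated bound: the first-first cross-term produces $T|W|^{1/2}E(W)^{1/2} \le N^2|W|^{1/2}E(W)^{1/2}$ (using $T\le N^{4/3}\le N^2$), the second-second produces $N|W|^3$, and the third-second produces $NT^{1/4}|W|^{21/8}$.

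The main technical work is to absorb the remaining six cross-terms into these three, using only the elementary relations $|W|^2 \le E(W) \le |W|^3$ and $|W| \le T \le N^{4/3}$. Most of them follow directly from one application of $E(W)\le |W|^3$ and one use of $T \le N^{4/3}$; for example, the second-third cross-term $N|W|^{3/2}E(W)^{3/8}T^{1/4}$ becomes $N|W|^{21/8}T^{1/4}$ upon applying $E(W)\le |W|^3$. The delicate case will be the third-third cross-term $N|W|^{9/8}E(W)^{3/8}T^{1/2}$, for which I expect to need a case split on whether $|W|\ge T^{2/3}$ or $|W|\le T^{2/3}$: in the former range $E(W)\le |W|^3$ bounds it by $N|W|^{9/4}T^{1/2}\le NT^{1/4}|W|^{21/8}$, while in the latter range $E(W)\ge |W|^2$ combined with $N\ge T^{3/4}$ bounds it by $N^2|W|^{1/2}E(W)^{1/2}$. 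This final case analysis is the main obstacle in the argument.
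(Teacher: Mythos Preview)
Your proposal is correct and follows essentially the same strategy as the paper: parameterize by $d=\gcd(n_1,n_2)$, apply Cauchy--Schwarz together with Lemmas \ref{lmm:R2bound} and \ref{lmm:R4bound} at scale $M=N/d$, sum over $d\ge D$, and resolve the algebra with a case split at $|W|=T^{2/3}$. The only organizational difference is that the paper applies Cauchy--Schwarz once more over $d$ to keep the bound in the factored form $(\cdot)^{1/2}(\cdot)^{1/2}$, then prunes dominated terms inside each factor before expanding, whereas you expand all nine cross-terms immediately and bound each; both routes reduce to the same inequalities and the same $|W|\gtrless T^{2/3}$ split.
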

\begin{proof}
As in the previous lemma, we let $d=\gcd(n_1,n_2)$ and $n_1=n_1'd$, $n_2=n_2'd$. When $d$ is large, we keep the discrete summation over $n_1',n_2'$ and apply Cauchy-Schwarz directly, giving

\begin{align*}
\sum_{n_1',n_2'\sim N/d}\Bigl|R\Bigl(\frac{n_1'}{n_2'}\Bigr)\Bigr|^3 &\ll  \Bigl(\sum_{n_1',n_2'\sim N/d}\Bigr|R\Bigl(\frac{n_1'}{n_2'}\Bigr)\Bigl|^2\Bigr)^{1/2}\Bigl(\sum_{n_1',n_2'\sim N/d}\Bigl|R\Bigl(\frac{n_1'}{n_2'}\Bigr)\Bigr|^4\Bigr)^{1/2}.
\end{align*}

Now we can bound the factors on the right-hand side by Lemmas \ref{lmm:R2bound} and \ref{lmm:R4bound},  with $M= N/d$.   This gives

\begin{align*}
 \sum_{n_1',n_2'\sim N/d}\Bigl|R\Bigl(\frac{n_1'}{n_2'}\Bigr)\Bigr|^3&\lessapprox\left(   \frac{|W| N^2}{d^2} +\frac{|W|^2 N}{d}+\frac{|W|^{5/4}T^{1/2}N}{d}   \right)^{1/2} \\
&\times  \left( \frac{N|W|^4}{d}+\frac{N^2 E(W)}{d^2}+\frac{E(W)^{3/4}|W|T^{1/2}N}{d} \right)^{1/2}.
 \end{align*}

Summing over $d \ge D$, using Cauchy-Schwarz,  and recalling that $D = N^2/T$ then gives

\begin{align}
&\sum_{\substack{n_1,n_2\sim N\\ \gcd(n_1,n_2)\ge D}}\Bigl|R\Bigl(\frac{n_1}{n_2}\Bigr)\Bigr|^3 \lessapprox \Bigl(\frac{|W| N^2}{D}+|W|^2 N+|W|^{5/4}T^{1/2}N\Bigr)^{1/2}\nonumber\\
&\qquad\qquad\qquad\qquad\qquad\qquad \times\Bigl(N|W|^4+\frac{N^2 E(W)}{D}+E(W)^{3/4}|W|T^{1/2}N\Bigr)^{1/2}.\nonumber
\end{align}

Next we work on simplifying and organizing the algebra.  Recall that we have $N \ge T^{3/4}$ and $D=N^2/T$.   Therefore,  we have $|W|^{5/4} T^{1/2} N \ge |W| T=|W|N^2/D$,  and we can ignore the first term in the first factor. Thus the above expression is bounded by

\begin{equation}
 \lessapprox \Bigl(|W|^2N+|W|^{5/4}T^{1/2}N\Bigr)^{1/2}\Bigl(N|W|^4+E(W)T+E(W)^{3/4}|W|T^{1/2}N\Bigr)^{1/2}.\label{eq:BigDBound}
\end{equation}

There are two main cases, depending on whether $|W| > T^{2/3}$ or not.  If $|W|>T^{2/3}$ then $|W|^2N>|W|^{5/4}T^{1/2}N$, and so the first factor is dominated by $|W|^2 N$.   We turn to the second factor.  If $|W| > T^{2/3}$, then $N|W|^4>N|W|^{13/4}T^{1/2}\ge E(W)^{3/4}|W|T^{1/2}N$.   Also, since $N \ge T^{3/4} > T^{1/3}$ and $E(W)\le |W|^3$,  $N|W|^4>|W|^3T\ge E(W)T$.   So the second factor is dominated by $N|W|^4$.   Therefore, if $|W|>T^{2/3}$, \eqref{eq:BigDBound} simplifies to

\begin{align}
&\sum_{\substack{n_1,n_2\sim N\\ \gcd(n_1,n_2)\ge D}}\Bigl|R\Bigl(\frac{n_1}{n_2}\Bigr)\Bigr|^3\lessapprox N|W|^3.\label{eq:BigD1}
\end{align}

Now suppose $|W| \le T^{2/3}$.   We see that $|W|^2N\le |W|^{5/4}T^{1/2}N$, so the first factor is dominated by $|W|^{5/4} T^{1/2} N$.   Turning to the second factor, and recalling that $N \ge T^{3/4} > T^{1/2}$ and $E(W)\le |W|^3$,   we see that 
$E(W)^{3/4}|W|T^{1/2}N\ge E(W)T$. Thus, if $|W|\le T^{2/3}$ we see that \eqref{eq:BigDBound} simplifies to

\begin{align}
\sum_{\substack{n_1,n_2\sim N\\ \gcd(n_1,n_2)\ge D}}\Bigl|R\Bigl(\frac{n_1}{n_2}\Bigr)\Bigr|^3 &\lessapprox \Bigl(|W|^{5/4}T^{1/2}N\Bigr)^{1/2}\Bigl(N|W|^4+E(W)^{3/4}|W|T^{1/2}N\Bigr)^{1/2}\nonumber\\
&\lessapprox NT^{1/4}|W|^{21/8}+E(W)^{1/2}|W|^{1/2}N^2\Bigl(\frac{T^{1/2}|W|^{5/8}}{E(W)^{1/8}N}\Bigr).\label{eq:BigD2}
\end{align}

Since $E(W)\ge |W|^2$, $|W|\le T^{2/3}$ and $N\ge T^{3/4}$, we see that $T^{1/2}|W|^{5/8}\le E(W)^{1/8}N$, so the final term in \eqref{eq:BigD2} is $O(|W|^{1/2}E(W)^{1/2}N^2)$. Thus, combining \eqref{eq:BigD1} and \eqref{eq:BigD2}, we find that  provided $N\ge T^{3/4}$, regardless of the size of $W$, we have

\[
\sum_{\substack{n_1,n_2\sim N\\ \gcd(n_1,n_2)\ge D}}\Bigl|R\Bigl(\frac{n_1}{n_2}\Bigr)\Bigr|^3\lessapprox N|W|^3+NT^{1/4}|W|^{21/8}+E(W)^{1/2}|W|^{1/2}N^2.\qedhere
\]

\end{proof}

%
%

\begin{proof}[Proof of Proposition \ref{prp:energybound}]
First we use Lemma \ref{lmm:Energy1} to give

\[
E(W)\lessapprox N^{-2\sigma}\sum_{n_1,n_2\sim N}\Bigl|R\Bigl(\frac{n_1}{n_2}\Bigr)\Bigr|^3.
\]

Splitting according to whether $\gcd(n_1,n_2)\le D=N^2/T$ or not, we find by Lemma \ref{lmm:DSmall} and Lemma \ref{lmm:DBig} that

\[
E(W)\lessapprox N^{-2\sigma}\Bigl(N|W|^3+NT^{1/4}|W|^{21/8}+E(W)^{1/2}|W|^{1/2}N^2\Bigr).
\]

This rearranges to give

\[
E(W)\lessapprox |W|N^{4-4\sigma}+|W|^{21/8}T^{1/4}N^{1-2\sigma}+|W|^3N^{1-2\sigma}.\qedhere
\]
\end{proof}

%
%
%
%

\section{Proof of results on large values of Dirichlet polynomials}

In this section we prove our main results on the large values of Dirichlet polynomials by assembling the tools in the previous sections.

\begin{proof}[Proof of Proposition \ref{prpstn:KeyProp}]

Suppose that $|D_N(t)| \ge N^\sigma$ on the set $W$ contained in an interval of length $T = N^{6/5}$.  By Proposition \ref{prpstn:ImBound} and \eqref{eq:SiExpansion}, we have

\[
|W|\ll_\epsilon N^{2-2\sigma}+N^{1-2\sigma}\Bigl(\sum_{\substack{m\in \mathbb{Z}^3\setminus\{0\}}}I_m\Bigr)^{1/3}=N^{2-2\sigma}+N^{1-2\sigma}\Bigl(S_1+S_2+S_3\Bigr)^{1/3}.
\]

By Proposition \ref{prpstn:S1}, $S_1$ is negligible. We bound $S_2$ by Proposition \ref{prpstn:S2}, and $S_3$ by Proposition \ref{prpstn:S3}. Therefore we get for any choice of $k\in\mathbb{N}$
 
\begin{align*}
|W|^3N^{6\sigma-3}&\lessapprox_\epsilon N^{3} +S_2+  S_3\\
&\lessapprox_{\epsilon,k} N^3+|W|^2N^2+T N |W|^{2-1/k}+N^2 |W|^{2-3/4k}T^{1/2k}+T^2 |W|^{3/2}\\
&\qquad+T |W| N^{3-2\sigma}+T |W|^2 N^{3/2-\sigma}+T^{9/8} |W|^{29/16} N^{3/2-\sigma}.
\end{align*}

In this formula $k$ comes from the bound for $S_2$.   It is a positive integer that we can choose.   The last inequality rearranges to give

\begin{align}
|W|&\lessapprox_{\epsilon,k} N^{2-2\sigma}+N^{5-6\sigma}+T^{\frac{k}{k+1}}N^{(4-6\sigma)\frac{k}{k+1}}+N^{(5-6\sigma)\frac{4k}{4k+3}}T^{\frac{2}{4k+3}}+T^{4/3}N^{2-4\sigma}\nonumber\\
&\qquad +T^{1/2}N^{3-4\sigma}+T N^{9/2-7\sigma}+T^{18/19}N^{72/19-112\sigma/19}.\label{eq:FullBound}
\end{align}

We choose $k=4$.  We also simplify the formulas using $T = N^{6/5}$.   

\begin{align*}
|W|&\lessapprox_{\epsilon} T\Bigl(N^{(4-10\sigma)/5}+N^{(19-30\sigma)/5}+N^{(74 - 120\sigma)/25}+N^{(298-480\sigma)/95}\\
&\qquad\qquad+N^{(12 - 20\sigma)/5}+N^{(9-14\sigma)/2}+N^{(354-560\sigma)/95}\Bigr)\\
&\lessapprox_{\epsilon} TN^{(4-10\sigma)/5}+TN^{(12-20\sigma)/5}+TN^{(9-14\sigma)/2}.
\end{align*}

If $\sigma\in[7/10,8/10]$ the first and third terms can be dropped and we get

\[ |W| \lessapprox_{\epsilon} T N^{(12 - 20 \sigma)/5}. \qedhere
\]

\end{proof}

%
%

\begin{rmk}
When $N$ is not too close to $T$, the most important terms in our bound for $|W|$ in \eqref{eq:FullBound} are the terms $T^{4/3}N^{2-4\sigma}$ and $T^{1/2}N^{3-4\sigma}$, and our choice of $T=N^{6/5}$ comes from balancing these terms. In this range the terms coming from $S_2$ do not make a significant contribution and there is some flexibility with the choice of $k$; the $T^{\frac{k}{k+1}}N^{(4-6\sigma)\frac{k}{k+1}}$ term is typically increasing in $k$ and the $N^{(5-6\sigma)\frac{4k}{4k+3}}T^{\frac{2}{4k+3}}$ term is typically decreasing in $k$, so the optimal choice of $k$ balances these terms. The choice $k=4$ is made as a simple choice that is sufficient for the $S_2$ terms to be dominated by the $S_3$ terms in the range under consideration.
\end{rmk}

%
%

When $N>T^{5/6}$ the process of going from Proposition \ref{prpstn:KeyProp} to Theorem \ref{thrm:LargeValues} is somewhat wasteful since it actually bounds the number of large values in $[0,N^{6/5}]$. By using a variation of the above argument, the bound in Theorem \ref{thrm:LargeValues} could be improved. We record one such improvement here.

\begin{prpstn}[Large values estimate for $N\ge T^{5/6}$]\label{prpstn:BigN}
Suppose $(b_n)_{n\sim N}$, $(t_r)_{r\le R}$ are as in Theorem \ref{thrm:LargeValues}, and that $T^{5/6}\le N\le T$ and $V=N^\sigma$ with $\sigma\ge 7/10$. Then we have

\[
R\lessapprox N^{2-2\sigma}+T^{1/2}N^{3-4\sigma}+\inf_{k\in\mathbb{N}}\Bigl(T^{\frac{k}{k+1}}N^{(4-6\sigma)\frac{k}{k+1}}+N^{(5-6\sigma)\frac{4k}{4k+3}}T^{\frac{2}{4k+3}}\Bigr).
\]

In particular, we have

\[
R\lessapprox N^{2-2\sigma}+T^{1/2}N^{3-4\sigma}+T^{(30\sigma-21)/5}N^{(46-60\sigma)/5}.
\]

\end{prpstn}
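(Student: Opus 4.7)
The plan is to rerun the proof of Proposition~\ref{prpstn:KeyProp} without specializing to $T=N^{6/5}$. After passing to a $T^\epsilon$-separated subset $W\subset\{t_r\}$ with $|W|\ge RT^{-\epsilon}$ and inserting the smoothing weight $w(n/N)$ into the Dirichlet polynomial (as in the proof of Theorem~\ref{thrm:LargeValues}), Proposition~\ref{prpstn:ImBound} bounds $|W|$ by $N^{2-2\sigma}+N^{1-2\sigma}(S_1+S_2+S_3)^{1/3}$. The estimates from Propositions~\ref{prpstn:S1}, \ref{prpstn:S2} (with a free positive integer parameter $k$), and \ref{prpstn:S3} all apply here, since $N\ge T^{5/6}$ implies the hypothesis $N\ge T^{3/4}$ required by Proposition~\ref{prpstn:S3}. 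Combining and rearranging exactly as in the proof of Proposition~\ref{prpstn:KeyProp} then yields the bound~\eqref{eq:FullBound} for every positive integer $k$.

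Under the hypotheses $\sigma\ge 7/10$ and $N\ge T^{5/6}$, the three extraneous terms $T^{4/3}N^{2-4\sigma}$, $TN^{9/2-7\sigma}$, and $T^{18/19}N^{72/19-112\sigma/19}$ in~\eqref{eq:FullBound} are each dominated by $T^{1/2}N^{3-4\sigma}$: the comparisons reduce to $T^{5/6}\le N$, $T\le N^{6\sigma-3}$, and $T\le N^{(72\sigma-30)/17}$ respectively, each of which follows from the hypotheses (the latter two are tight at $\sigma=7/10$, where both become $T\le N^{6/5}$). Moreover, the term $N^{5-6\sigma}$ is the limit of $N^{(5-6\sigma)4k/(4k+3)}T^{2/(4k+3)}$ as $k\to\infty$, with approach rate $T^{O(1/k)}$. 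Hence for any $\epsilon'>0$ we may take $k$ large enough so that the loss is at most $T^{\epsilon'}$, absorbing $N^{5-6\sigma}$ into the infimum over $k\in\mathbb{N}$ (admissible by $\lessapprox$). Letting the separation parameter $\epsilon\to 0$ then produces the first displayed bound.

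For the ``in particular'' bound, the task is to choose an integer $k$ so that both $f(k):=T^{k/(k+1)}N^{(4-6\sigma)k/(k+1)}$ and $g(k):=N^{(5-6\sigma)4k/(4k+3)}T^{2/(4k+3)}$ are $\lessapprox T^{(30\sigma-21)/5}N^{(46-60\sigma)/5}$. Writing $\rho:=\log T/\log N\in[1,6/5]$, the constraint $f(k)\le\text{target}$ simplifies to $k/(k+1)\le[(30\sigma-21)\rho+46-60\sigma]/[5(\rho+4-6\sigma)]$, which at $\rho=6/5$ reduces (via the identity $104-120\sigma=4(26-30\sigma)$) to $k\le 4$ and is trivial at $\rho=1$; the constraint $g(k)\le\text{target}$ yields a complementary lower bound on $k$ that is vacuous at $\rho=6/5$ and tends to infinity as $\rho\to 1$, matching the observation that both limits $f(k)\to TN^{4-6\sigma}$ and $g(k)\to N^{5-6\sigma}$ coincide with the target at $\rho=1$. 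A suitable integer $k$ therefore exists for every $(\rho,\sigma)$ in the range: $k=4$ handles $\rho$ close to $6/5$, while taking $k$ as large as $O(1/\epsilon')$ (with the resulting $T^{O(1/k)}$ loss absorbed by $\lessapprox$) handles $\rho$ close to $1$. The main obstacle is the algebraic bookkeeping to verify existence of a valid integer $k$ uniformly over $(\rho,\sigma)\in[1,6/5]\times[7/10,8/10]$; this reduces to a routine but tedious case analysis of the underlying one-parameter family of linear constraints.
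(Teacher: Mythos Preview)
Your overall plan (rerun the proof of Proposition~\ref{prpstn:KeyProp} without setting $T=N^{6/5}$, then absorb terms) is the same as the paper's, and your verification that the three terms $T^{4/3}N^{2-4\sigma}$, $TN^{9/2-7\sigma}$, $T^{18/19}N^{72/19-112\sigma/19}$ are each $\le T^{1/2}N^{3-4\sigma}$ under $\sigma\ge 7/10$, $N\ge T^{5/6}$ is correct.  However, the handling of the $N^{5-6\sigma}$ term has a genuine gap.

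Your claim is that $N^{5-6\sigma}$ can be ``absorbed into the infimum over $k$'' because $g(k):=N^{(5-6\sigma)\frac{4k}{4k+3}}T^{\frac{2}{4k+3}}\to N^{5-6\sigma}$ as $k\to\infty$.  But this only shows $N^{5-6\sigma}\le T^{\epsilon'}g(k)$ for a \emph{single large} $k$, and hence only gives the bound $|W|\lessapprox N^{2-2\sigma}+T^{1/2}N^{3-4\sigma}+f(k)+g(k)$ for that large $k$, not for the $k$ achieving the infimum.  Concretely, when $T<N^{4-4\sigma}$ (which happens throughout $[T^{5/6},T]$ for $\sigma$ near $7/10$) one has $T^2<N^{15-18\sigma}$, so $g(k)<N^{5-6\sigma}$ for every $k$, while both $f$ and $g$ are increasing in $k$; the infimum is attained at $k=1$, where $N^{5-6\sigma}$ is genuinely larger than $f(1)+g(1)$ and also larger than $T^{1/2}N^{3-4\sigma}$.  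The paper closes this gap by combining \eqref{eq:FullBound} with the classical bound \eqref{eq:ClassicalLargeValue}: if $T\le N^{4-4\sigma}$ then the Mean Value term $TN^{1-2\sigma}\le T^{1/2}N^{3-4\sigma}$ already gives the result, while if $T>N^{4-4\sigma}$ then $N^{5-6\sigma}\le T^{1/2}N^{3-4\sigma}$ directly, and \eqref{eq:FullBound} gives the bound for every $k$.  You need this extra input.

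For the ``in particular'' clause, your sketch is too vague to be checkable, and it misses an ingredient.  The paper first disposes of $N^{4-4\sigma}>T$ via \eqref{eq:ClassicalLargeValue} and, more importantly, disposes of $\sigma\ge 39/50$ via Jutila's large values estimate; only for $\sigma\in[7/10,39/50]$ does the paper then verify (by an explicit computation with $n=\log N/\log T$) that the interval of admissible $k$ has length $\ge 1$.  Your remark that ``$k=4$ handles $\rho$ close to $6/5$, while taking $k$ large handles $\rho$ close to $1$'' does not establish that the valid ranges patch together, and you do not address what happens when $\sigma$ is near $8/10$, where the paper needs the auxiliary Jutila bound.
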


Proposition \ref{prpstn:BigN} implies that the $N^{18/5}V^{-4}$ term in Theorem \ref{thrm:LargeValues} can be replaced by $T^{1/2}N^{3-4\sigma}+T^{(30\sigma-21)/5}N^{(46-60\sigma)/5}$, which is smaller. When $\sigma=3/4$, Proposition \ref{prpstn:BigN} improves on \eqref{eq:ClassicalLargeValue} by a factor of $(T/N)^{1/2}$.  Since we anticipate the main uses of Theorem \ref{thrm:LargeValues} to be when $N<T^{5/6}$ we content ourselves with the simpler formulation of Theorem \ref{thrm:LargeValues}.

\begin{proof}
Jutila's large values estimate \cite[Theorem (1.4)]{J} with $k=3$ gives

\[
R\lessapprox N^{2-2\sigma}+TN^{(10-16\sigma)/3}+TN^{18-24\sigma},
\]

which implies our bound for $\sigma\ge 39/50$ since the second and third terms above are then both smaller than $T^{1/2}N^{3-4\sigma}$. Thus we only need to consider $\sigma\in [7/10,39/50]$. The bound now follows from \eqref{eq:FullBound}, \eqref{eq:ClassicalLargeValue} and a little algebra. For $\sigma\in[7/10,39/50]$ and $N\in [T^{5/6},T]$ we have that

\[
T^{1/2}N^{3-4\sigma}\gg T^{4/3}N^{2-4\sigma}+T N^{9/2-7\sigma}+T^{18/19}N^{72/19-112\sigma/19}.
\]

Therefore the $T^{1/2}N^{3-4\sigma}$ term in \eqref{eq:FullBound} dominates the $5^{th}$, $7^{th}$ and $8^{th}$ terms in \eqref{eq:FullBound}. We also have

\[
T^{1/2}N^{3-4\sigma}\gg \min(TN^{1-2\sigma},N^{5-6\sigma}).
\]

Therefore, by combining \eqref{eq:FullBound} and \eqref{eq:ClassicalLargeValue} we find that

\[
R\lessapprox  N^{2-2\sigma}+T^{1/2}N^{3-4\sigma}+\inf_{k\in\mathbb{N}}\Bigl(T^{\frac{k}{k+1}}N^{(4-6\sigma)\frac{k}{k+1}}+N^{(5-6\sigma)\frac{4k}{4k+3}}T^{\frac{2}{4k+3}}\Bigr).
\]

This gives the first bound. For the final bound, we note that if $N^{6\sigma-4}\ge T$ then the result follows from \eqref{eq:ClassicalLargeValue}, so we may assume that $N^{6\sigma-4}<T$ and so the first term in the infimum above is increasing with $k$. Similarly, if $N^{4-4\sigma}>T$ then $TN^{1-2\sigma}<T^{1/2}N^{3-4\sigma}$ so the result follows from \eqref{eq:ClassicalLargeValue}. Therefore we may also assume that $N^{4-4\sigma}\le T$. In this case $N^{15-18\sigma}\le T^2$ and so in the second term in the infimum above is decreasing in $k$. Thus, for the expression to be less than $T^{(30\sigma-21)/5}N^{(46-60\sigma)/5}$ we require

\[
\frac{73 - 138 n - 90 \sigma + 180 n \sigma}{12(1-n)(10\sigma-7)}\le k \le \frac{-21 + 46 n + 30 \sigma - 60 n \sigma}{2(1-n)(13-15\sigma)}
\]

where $n:=\log{N}/\log{T}$. The upper bound of this interval is always at least 1 for $n\in[5/6,1)$, $\sigma\in [7/10,39/50]$ (it is increasing in $\sigma$ in this range so is smallest when $\sigma=7/10$) and the length of this interval is

\[
1+\frac{5(6n-5)(-41 + 123 \sigma - 90 \sigma^2)}{12(1-n)(10\sigma-7)(13-15\sigma)}.
\]

Thus the interval has length at least 1 whenever $\sigma\in [7/10,39/50]$, and so there is a choice of $k\in\mathbb{N}$ giving the desired bound.
\end{proof}

%
%
%
%

\section{Applications to Riemann zeta function and prime numbers}

\subsection{Proof of Theorem \ref{thrm:ZeroDensity}}

As noted in the introduction, Theorem \ref{thrm:ZeroDensity} follows from Ingham's result \eqref{eq:Ingham} if $\sigma\le 7/10$ and Huxley's result \eqref{eq:Huxley} if $\sigma\ge 8/10$, so we may assume that $\sigma\in[7/10,8/10]$. Clearly it suffices to show the bound of Theorem \ref{thrm:ZeroDensity} for zeros with imaginary part in $[T,2T]$, since the result for $[0,T]$ then follows by considering $T/2,T/4,\dots$ in place of $T$.

We now briefly recall the classical zero-detecting methodology, referring the reader to \cite[Chapter 12]{M3} or \cite[Appendix C]{MP} for more complete details. Given a parameter $N=2^j$, we let

\begin{align*}
b_n&:=\Bigl(\sum_{\substack{d|n\\ d\le 2T^{1/100}}}\mu(d)\Bigr)\exp\Bigl(-\frac{n}{T^{1/2}}\Bigr),\qquad D(s):=\sum_{n\sim N}b_n n^{-s}.
\end{align*}
(The reader should note that this is not quite consistent with the notation $D(t)$ used earlier in the paper.)

Then, a non-trivial zero $\rho=\beta+i\gamma$ of $\zeta(s)$ with $\beta\ge \sigma$ and $\gamma\in [T,2T]$ is called a `Type I zero' if there is a choice of $N=2^j\in [T^{1/100},T^{1/2}(\log{T})^2]$ such that $|D(\rho)|\ge 1/(3\log{T})$. If it is not a Type I zero then it is a `Type II zero', and the number of Type II zeros is $\le T^{2-2\sigma}(\log{T})^{O(1)}$ by \cite[Lemma 24]{MP}. Thus it suffices to bound the number of Type I zeros. There are $O(\log{T})$ choices of $N$ so we focus on the value of $N$ which gives the largest number of Type I zeros.

We now make a slight modification to $D$ to remove the dependencies on the real parts. Let $\rho=\beta+i\gamma$ be a Type I zero with $\beta\ge \sigma$, and let $\psi(u)$ be a smooth function equal to $e^{u(\sigma-\beta)}$ on $[\log{N},\log{2N}]$ and supported on $[(\log{N})/2,2\log{N}]$ with $\|\psi^{(j)}(t)\|_\infty \ll_j t^{-j}$ for all $j\in \mathbb{Z}_{\ge 0}$. We then note that by Fourier expansion

\[
D(\rho)=\sum_{n\sim N}b_n n^{-\sigma-i\gamma}\psi(\log{n})=\int_{\mathbb{R}}\widehat{\psi}(\xi)\Bigl(D(\sigma+i(\gamma-2\pi \xi))\Bigr)d\xi.
\]

Since $\widehat{\psi}$ is rapidly decreasing, we may truncate the integral to $\xi\lessapprox 1$ at the cost of an $O(T^{-100})$ error term. Therefore we see that if $\rho$ is a Type I zero, we have $|D(\sigma+i\gamma+i\xi)|\gtrapprox 1$ for some $\xi\lessapprox 1$. There are $O(\log{T})$ non-trivial zeros $\rho=\beta+i\gamma$ with $\gamma \in [t,t+1]$ for any $t\in [T,2T]$. Therefore we can find a 1-separated set of points $(s_r)_{r\le R}$ in $[T,2T]$ with $|D(\sigma+is_r)|\gtrapprox 1$ and the number $R$ of points satisfies $R\gtrapprox N(\sigma,2T)-N(\sigma,T)$. Let

\[
\tilde{b}_n:=\Bigl(\frac{N}{n}\Bigr)^\sigma b_n,\qquad \tilde{D}(t):=\sum_{n\sim N}\tilde{b}_nn^{it}=N^\sigma D(\sigma+it).
\]
Thus it suffices to show that if $N<T^{1/2+o(1)}$ and $W$ is a 1-separated set in $[T,2T]$ such that $|\tilde{D}(t)|\gtrapprox N^\sigma$, we have $|W|\lessapprox T^{15(1-\sigma)/(3+5\sigma)+o(1)}$.

First we note that if $N<T^{1/100}$ then $b_n=0$ identically, so the result is trivial in this range. Thus we may assume $N>T^{1/100}$. We can then choose a value of $k\ll 1$ such that

\begin{equation}
T^{10/(6+10\sigma)}\le N^k\le T^{15/(6+10\sigma)}.
\label{eq:kDef}
\end{equation}

This is clearly possible if $N\le T^{5/(6+10\sigma)}$, whereas if $N>T^{5/(6+10\sigma)}$ we can take $k=2$ since $N<T^{1/2+o(1)}$ and $15/(6+10\sigma)>1$ when $\sigma\le 8/10$. Set

\[
\alpha:=\frac{15(1-\sigma)}{(3+5\sigma)(18/5-4\sigma)}.
\]

If $N,k$ are such that $N^k\le T^\alpha$, then we apply Theorem \ref{thrm:LargeValues} to the Dirichlet polynomial $\tilde{D}^k$. This gives

\begin{align*}
|W|&\lessapprox N^{2k(1-\sigma)}+N^{(18/5-4\sigma)k}+TN^{(12/5-4\sigma)k}\lessapprox T^{15(1-\sigma)/(3+5\sigma)}.
\end{align*}

In the final inequality above we applied the upper bound from \eqref{eq:kDef} to the first term, the upper bound $N^k\le T^\alpha$ to the second term and the lower bound from \eqref{eq:kDef} to the final term.

If instead we have $N^k>T^\alpha$, then we apply the usual Mean Value Theorem to $\tilde{D}^k$. This gives

\begin{equation}
|W|\lessapprox N^{2k-2k\sigma}+TN^{k-2k\sigma}\lessapprox T^{15(1-\sigma)/(3+5\sigma)}+T^{1+(1-2\sigma)\alpha}.
\label{eq:WBound2}
\end{equation}

Here we applied the upper bound from \eqref{eq:kDef} to the first term and the lower bound from $N^k\ge T^\alpha$ to the second term. We then note that for $\sigma<9/10$ we have 
\[
1+(1-2\sigma)\alpha=\frac{129-195\sigma+50\sigma^2}{2(3+5\sigma)(9-10\sigma)}=\frac{15(1-\sigma)}{3+5\sigma}-\frac{250(\sigma-3/4)^2+3/8}{2(3+5\sigma)(9-10\sigma)},
\]
 and so \eqref{eq:WBound2} simplifies to $|W|\lessapprox T^{15(1-\sigma)/(3+5\sigma)}$, as required.

\begin{rmk}For the purposes of proving a zero density estimate of the form $N(\sigma,T)\lessapprox T^{A(1-\sigma)}$ with $A$ a fixed constant as small as possible, the critical case in our work is when $\sigma=7/10$, $N=T^{5/13}$ (so we apply Theorem \ref{thrm:LargeValues} to a Dirichlet polynomial of length $N^2=T^{10/13}$), we subdivide $[0,T]$ into intervals of length $T_1=T^{12/13}$ and where the set $W$ of large values on each subinterval has $|W|\approx T_1^{2/3}$ and $E(W)\approx |W|^{5/2}\approx |W|^4/T_1$. In this critical situation our bounds for $S_1$ and $S_2$ are both best possible,  and so any further improvement would have to come from the $S_3$ term. Our bound for $E(W)$ is also likely to be difficult to improve since a random set $W$ would have $E(W)\approx |W|^4/T_1$. The argument of Section \ref{sec:Affine} is also essentially tight if the $R$ function is taking $T_1^{2/3}$ values of size $|W|/M\approx T_1^{1/2}$ and the set of these values is highly concentrated on rationals with numerator and denominator of size $T_1^{1/3}$ (recall the remark after Proposition \ref{propsumaff}).
\end{rmk}

%
%

\subsection{Proof of Corollary \ref{crllry:All} and Corollary \ref{crllry:AlmostAll}}

These are well-known to follow quickly from \eqref{eq:ZeroDensity}, but for completeness we give a proof. By partial summation, it suffices to prove corresponding results for the Von Mangoldt function in place of the prime indicator function. By the explicit formula (see, for example \cite[Chapter 17]{D}) we have for any choice of $2\le T\le x$

\[
\sum_{n\in [x,x+y]}\Lambda(n)=y-\sum_{|\rho|\le T}\Bigl(\frac{(x+y)^{\rho}-x^\rho}{\rho}\Bigr)+O\Bigl(\frac{x (\log{x})^3}{T}\Bigr).
\]

We choose $T=xy^{-1}\exp(2\sqrt[4]{\log{x}})$ so the error term is $O(y\exp(-\sqrt[4]{\log{x}}))$, and note that the term in parentheses is $\int_x^{x+y}t^{\rho-1}dt\ll yx^{\Re(\rho)-1}$. Therefore, by considering $1/\log{x}$-separated values of $\sigma$, we find that 

\[
\sum_{n\in [x,x+y]}\Lambda(n)=y+O\Bigl(y (\log{x}) \sup_{\sigma}x^{\sigma-1} N(\sigma,T)\Bigr)+O(y\exp(-\sqrt[4]{\log{x}})).
\]

By combining \eqref{eq:ZeroDensity} with a slightly stronger result (such as \cite{J} or \cite[Theorem 12.1]{M3}) that loses at most logarithmic factors for $\sigma$ closer to 1, we have the bound

\begin{equation}
N(\sigma,T)\ll T^{(30/13+o(1))(1-\sigma)}(\log{T})^{O(1)}.
\end{equation}

Using this and the Vinogradov-Korobov zero-free bound $N(\sigma,T)=0$ for $\sigma \ge 1-c(\log{T})^{-2/3}(\log\log{T})^{-1/3}$ for a suitable constant $c>0$ (see \cite[Corollary 11.4]{M3}), we find that

\begin{align*}
 \sup_{\sigma}x^{\sigma-1} N(\sigma,T) &\ll (\log{T})^{O(1)} \sup_{\sigma \le 1-c(\log{T})^{-5/7}}\Bigl(\frac{T^{30/13+o(1)}}{x}\Bigr)^{1-\sigma}\\
 &\ll_\epsilon \exp(-\sqrt[4]{\log{x}})
\end{align*}

provided $T<x^{13/30-\epsilon/2}$. Recalling that $T=xy^{-1}\exp(2\sqrt[4]{\log{x}})$ and $y\ge x^{17/30+\epsilon}$, this gives Corollary \ref{crllry:All}.

For Corollary \ref{crllry:AlmostAll}, we first let $\delta=X^{-13/15+\epsilon/2}$. By splitting $[x,x+y]$ into intervals of length $\delta x$ and applying Cauchy-Schwarz, we see that

\begin{align*}
\int_{X}^{2X}\Bigl(\sum_{n\in [x,x+y]}\Lambda(n)-y\Bigr)^2 dx\ll  \frac{y^2}{\delta^2 X^2}\int_{X}^{3X}\Bigl(\sum_{n\in [x,x+\delta x]}\Lambda(n)-\delta x\Bigr)^2dx+O(\delta^2 X^{3}).
\end{align*}

If the corollary was false, the left hand side would be significantly larger than $y^2 X\exp(-3\sqrt[4]{\log{X}})$, so it suffices to show that the integral on the right hand side is $\ll \delta^2 X^3\exp(-3\sqrt[4]{\log{X}})$. Applying the explicit formula as above with $T=\delta^{-1}\exp(4\sqrt[4]{\log{X}})$, we see that it suffices to show that

\begin{equation}
\int_X^{3X} \Bigl| \sum_{|\rho|<T}x^\rho\Bigl(\frac{(1+\delta)^\rho-1}{\rho}\Bigr)\Bigr|^2dx\ll \delta^2 X^3 \exp(-3\sqrt[4]{\log{X}}).
\label{eq:AlmostAllTarget}
\end{equation}

Expanding the sum, and performing the integral over $x$, we obtain

\begin{align*}
&\sum_{|\rho_1|,|\rho_2|\le T}\Bigl(\frac{(1+\delta)^{\rho_1}-1}{\rho_1}\Bigr)\overline{\Bigl(\frac{(1+\delta)^{\rho_2}-1}{\rho_2}\Bigr)}\int_X^{3X}x^{\rho_1+\overline{\rho_2}}dx\ll \delta^2\sum_{|\rho_1|,|\rho_2|<T}\frac{X^{\Re(\rho_1)+\Re(\rho_2)+1}}{|\rho_1+\overline{\rho_2}+1|}.
\end{align*}

Since $X^{\Re(\rho_1)+\Re(\rho_2)+1}\le X^{2\Re(\rho_1)+1}+X^{2\Re(\rho_2)+1}$, and noting that (since there are $O(\log{T})$ zeros in a horizontal strip of height 1) we have

\[
\sum_{|\rho_2|<T} |1+z+\overline{\rho_2}|^{-1}\ll (\log{T})^2
\]

 for any $|z|<T$ with $\Re(z)\ge 0$. Thus we find that

\[
\int_X^{3X} \Bigl| \sum_{|\rho|<T}x^\rho\Bigl(\frac{(1+\delta)^\rho-1}{\rho}\Bigr)\Bigr|^2dx\ll (\log{X})^3 \delta^2\sup_{\sigma} X^{2\sigma+1}N(\sigma,T).
\]

As above, applying \eqref{eq:ZeroDensity} and the zero-free region, we have that

\[
\sup_{\sigma} X^{2\sigma+1}N(\sigma,T)\ll X^3\sup_{\sigma \le 1-c(\log{T})^{-5/7}}\Bigl(\frac{T^{30/13+o(1)}}{X^2}\Bigr)^{1-\sigma}\ll_\epsilon X^3\exp(-10\sqrt[4]{\log{X}}),
\]

on recalling that $T=\delta^{-1}\exp(4\sqrt[4]{\log{X}})\lessapprox X^{13/15-\epsilon/3}$. Putting this together then gives \eqref{eq:AlmostAllTarget}, as required.

\begin{rmk}
By using a prime decomposition (such as the Heath-Brown Identity) and Mellin inversion, it is possible to relate the count of primes in short intervals directly to Dirichlet polynomials. The critical situation for both Corollary \ref{crllry:All} and Corollary \ref{crllry:AlmostAll} is handling a product of six Dirichlet polynomials each of size roughly $x^{1/6}$ (this was the limiting case in the earlier work of Huxley \cite{Hu} too). As in \cite{HB4}, by bounding this contribution corresponding to six almost equal sized primes using a sieve method, one could obtain an asymptotic estimate in the slightly larger range $y\in[x^{17/30-\epsilon},x^{0.99}]$ and $y\in [X^{2/15-\epsilon},X^{0.99}]$ at the cost of a worse error term of size roughly $O(\epsilon^4 y/\log{x})$.
\end{rmk}

%
%
%
%

\bibliographystyle{plain}

\begin{thebibliography}{5}

\bibitem[B5]{Bour} J. Bourgain, Remarks on Montgomery's conjectures on Dirichlet sums. Lecture Notes in Math., 1469 Springer-Verlag, Berlin, 1991, 153-165.


\bibitem[Bo]{B} J. Bourgain, On large values estimates for Dirichlet polynomials and the density hypothesis for the Riemann zeta function. IMRN,
(2000), no. 2, 133–146.

\bibitem[Ca]{Ca} F. Carlson. \"Uber die Nullstellen der Dirichletschen Reihen und der Riemannschen $\zeta$-Funktion. Ark. Mat.
Astron. Fys., 15(20):28 pp., 1921

\bibitem[Da]{D} H. Davenport, {\it Multiplicative number theory}, third ed.. Springer-Verlag, New York, 2000.


\bibitem[Ha]{H} G. Hal\'asz, \"Uber die Mittelwerte multiplikativer zahlentheoretischer Funktionen. Acta Math. Acad. Sci. Hungaricae 19 (1968), 365-403.

\bibitem[HT]{HT} G. Hal\'asz,  and P.  Tur\'an, On the distribution of roots of Riemann zeta and allied functions, I.  Journal of Number Theory 1.1 (1969), 121-137.

\bibitem[HB]{HB} D. R. Heath-Brown,  A large values estimate for Dirichlet polynomials. Journal of the London Mathematical Society 2.1 (1979), 8-18.

\bibitem[HB2]{HB2} D. R. Heath-Brown, The differences between consecutive primes, II. Journal of the London Mathematical Society, 2.19 (1979), 207-220.

\bibitem[HB3]{HB3} D. R. Heath-Brown, Zero density estimates for the Riemann zeta-function and Dirichlet L-functions. Journal of the London Mathematical Society, 2.19 (1979), 221-232.

\bibitem[HB4]{HB4} D. R. Heath-Brown, The number of primes in a short interval. Journal für die reine und angewandte Mathematik 389 (1988), 22-63.

\bibitem[Hu]{Hu} M. N. Huxley.  On the difference between consecutive primes. Invent. Math., 15:164-170, 1972.

\bibitem[In]{I} A. E. Ingham. On the estimation of $N (\sigma , T )$. Quart. J. Math. Oxford Ser., 11:291–292, 1940.

\bibitem[Iv]{Iv} A. Ivi\'c. {\it The Riemann Zeta-Function}. Wiley, 1985.


\bibitem[Ju]{J} M. Jutila, Zero-density estimates for L-functions. Acta Arith., 32 (1977), 55–62.

\bibitem[MP]{MP} J. Maynard and K. Pratt, Half-isolated zeros and zero-density estimates. IMRN, (2024), no 19, 12978–13014,

\bibitem[M]{M} H. Montgomery,  Mean and large values of Dirichlet polynomials. Invent. Math. 8.4 (1969), 334-345.

\bibitem[M2]{M2} H. Montgomery,  {\it Ten Lectures on the Interface Between Analytic Number Theory and Harmonic Analysis}. No. 84. American Mathematical Soc., 1994.

\bibitem[M3]{M3} H. Montgomery. {\it Topics in Multiplicative Number Theory}. Lecture Notes in Mathematics, Vol. 227.
Springer-Verlag, Berlin-New York, 1971.

\end{thebibliography}

\end{document}